\newcommand{\lyxmathsym}[1]{\ifmmode\begingroup\def\b@ld{bold}
  \text{\ifx\math@version\b@ld\bfseries\fi#1}\endgroup\else#1\fi}
\numberwithin{equation}{section}
\numberwithin{figure}{section}
\theoremstyle{plain}
\newtheorem{thm}{\protect\theoremname}[section]
  \theoremstyle{definition}
  \newtheorem{defn}[thm]{\protect\definitionname}
  \theoremstyle{remark}
  \newtheorem{rem}[thm]{\protect\remarkname}
  \theoremstyle{plain}
  \newtheorem{cor}[thm]{\protect\corollaryname}
  \theoremstyle{plain}
  \newtheorem{lem}[thm]{\protect\lemmaname}
  \theoremstyle{remark}
  \newtheorem*{rem*}{\protect\remarkname}
  \providecommand{\corollaryname}{Corollary}
  \providecommand{\definitionname}{Definition}
  \providecommand{\lemmaname}{Lemma}
  \providecommand{\remarkname}{Remark}
\providecommand{\theoremname}{Theorem}
\begin{document}

\title{Wave Equations with Moving Potentials}

\author{Gong Chen}

\date{\today}

\email{gc@math.toronto.edu}

\address{Department of Mathematics, The University of Toronto, 540 St. George
St., Room 6290, Toronto, Ontario, Canada, M5S 2E4.}

\keywords{Strichartz estimates; energy estimate; local energy decay; moving
potentials.}
\begin{abstract}
In this paper, we study the some reversed Strichartz estimates along
general time-like trajectories for wave equations in $\mathbb{R}^{3}$.
Some applications of the reversed Strichartz estimates and the structure
of wave operators to the wave equation with one potential are also
discussed. These techniques are useful to analyze the stability problem
of traveling solitons.
\end{abstract}

\maketitle

\section{Introduction\label{sec:Intro}}

Our starting point is the free wave equation ($H_{0}=-\Delta$) on
$\mathbb{R}^{3}$
\begin{equation}
\partial_{tt}u-\Delta u=0
\end{equation}
with initial data
\begin{equation}
u(x,0)=g(x),\,u_{t}(x,0)=f(x).
\end{equation}
We can write down $u$ explicitly,
\begin{equation}
u=\frac{\sin\left(t\sqrt{-\Delta}\right)}{\sqrt{-\Delta}}f+\cos\left(t\sqrt{-\Delta}\right)g.
\end{equation}
It obeys the energy inequality,
\begin{equation}
E_{F}(t)=\int_{\mathbb{R}^{3}}\left|\partial_{t}u(t)\right|^{2}+\left|\nabla u(t)\right|^{2}\,dx\lesssim\int_{\mathbb{R}^{3}}\left|f\right|^{2}+\left|\nabla g\right|^{2}\,dx.
\end{equation}
 We also have the well-known dispersive estimates for the free wave
equation on $\mathbb{R}^{3}$:
\begin{equation}
\left\Vert \frac{\sin\left(t\sqrt{-\Delta}\right)}{\sqrt{-\Delta}}f\right\Vert _{L^{\infty}\left(\mathbb{R}^{3}\right)}\lesssim\frac{1}{\left|t\right|}\left\Vert \nabla f\right\Vert _{L^{1}\left(\mathbb{R}^{3}\right)},\label{eq:disper1}
\end{equation}

\begin{equation}
\left\Vert \cos\left(t\sqrt{-\Delta}\right)g\right\Vert _{L^{\infty}\left(\mathbb{R}^{3}\right)}\lesssim\frac{1}{\left|t\right|}\left\Vert \Delta g\right\Vert _{L^{1}\left(\mathbb{R}^{3}\right)}.\label{eq:disper2}
\end{equation}
For the sake of completeness, the proofs of estimates \eqref{eq:disper1}
and \eqref{eq:disper2} are provided in details in Appendix A. (Notice
that the estimate \eqref{eq:disper2} is slightly different from the
estimates commonly used in the literature, such as Krieger-Schlag
\cite{KS} where one needs the $L^{1}$ norm of $D^{2}g$ instead
of $\Delta g$).

\smallskip

Strichartz estimates can be derived abstractly from these dispersive
inequalities and the energy inequality. With some appropriate $\left(p,q,s\right)$,
one has
\begin{equation}
\|u\|_{L_{t}^{p}L_{x}^{q}}\lesssim\|g\|_{\dot{H}^{s}}+\|f\|_{\dot{H}^{s-1}}\label{eq:IFStri}
\end{equation}
The non-endpoint estimates for the wave equations can be found in
Ginibre-Velo \cite{GV}. Keel\textendash Tao \cite{KT} also obtained
sharp Strichartz estimates for the free wave equation in $\mathbb{R}^{n},\,n\geq4$
and everything except the endpoint in $\mathbb{R}^{3}$. See Keel-Tao
\cite{KT} and Tao's book \cite{Tao} for more details on the subject's
background and the history. 

\smallskip

In $\mathbb{R}^{3}$, there is no hope to obtain such  an estimate
with the $L_{t}^{2}L_{x}^{\infty}$ norm, the so-called endpoint Strichartz
estimate for free wave equations, cf.~Klainerman-Machedon \cite{KM}
and Machihara-Nakamura-Nakanishi-Ozawa \cite{MNNO}. But if we reverse
the order of space-time integration, one can obtain a version of reversed
Strichartz estimates from the Morawetz estimate, cf.~Theorem \ref{thm:EndRStrichF}:
\begin{equation}
\left\Vert \frac{\sin\left(t\sqrt{-\Delta}\right)}{\sqrt{-\Delta}}f\right\Vert _{L_{x}^{\infty}L_{t}^{2}}\lesssim\left\Vert f\right\Vert _{L^{2}\left(\mathbb{R}^{3}\right)},\,\left\Vert \cos\left(t\sqrt{-\Delta}\right)g\right\Vert _{L_{x}^{\infty}L_{t}^{2}}\lesssim\left\Vert g\right\Vert _{\dot{H}^{1}\left(\mathbb{R}^{3}\right)}.
\end{equation}
These estimates are extended to inhomogeneous cases and perturbed
Hamiltonian in Beceanu-Goldberg \cite{BecGo}. In Section \ref{sec:revered},
we will study these estimates and their generalizations intensively.
We will also study the other extreme case with the norm $L_{x}^{6}L_{t}^{\infty}$
as in Beceanu-Goldberg \cite{BecGo}:
\begin{equation}
\left\Vert \frac{\sin\left(t\sqrt{-\Delta}\right)}{\sqrt{-\Delta}}f\right\Vert _{L_{x}^{6}L_{t}^{\infty}}\lesssim\left\Vert f\right\Vert _{L^{2}\left(\mathbb{R}^{3}\right)},\,\left\Vert \cos\left(t\sqrt{-\Delta}\right)g\right\Vert _{L_{x}^{6}L_{t}^{\infty}}\lesssim\left\Vert g\right\Vert _{\dot{H}^{1}\left(\mathbb{R}^{3}\right)}.
\end{equation}
These two estimates can be combined together to remedy the failure
of the regular endpoint Strichartz estimate. For example, in Chen
\cite{GC3}, these estimates are used to study the multi-soliton solution
to a wave equation in which case, each soltion decays slowly.

\smallskip

Next, we consider a linear wave equation with a real-valued stationary
potential,
\begin{equation}
H=-\Delta+V,
\end{equation}
\begin{equation}
\partial_{tt}u+Hu=\partial_{tt}u-\Delta u+Vu=0,
\end{equation}
\begin{equation}
u(x,0)=g(x),\,u_{t}(x,0)=f(x).
\end{equation}
Explicitly, we have
\begin{equation}
u=\frac{\sin\left(t\sqrt{H}\right)}{\sqrt{H}}f+\cos\left(t\sqrt{H}\right)g.
\end{equation}
For the class of short-range potentials we consider in this paper,
under our hypotheses $H$ only has pure absolutely continuous spectrum
on $[0,\infty)$ and a finite number of negative eigenvalues. It is
crucial to notice that if there is a negative eigenvalue $E<0$, the
associated eigenfunction responds to the wave equation propagators
with a scalar factor by $\cos\left(t\sqrt{E}\right)$ or $\frac{\sin\left(t\sqrt{E}\right)}{E^{\frac{1}{2}}}$,
both of which will grow exponentially since $\sqrt{E}$ is purely
imaginary. Thus, Strichartz estimates for $H$ must include a projection
$P_{c}$ onto the continuous spectrum in order to get away from this
situation. 

\smallskip

The problem of the dispersive decay and Strichartz estimates for the
wave equation with a potential has received much attention in recent
years, see the papers by Beceanu-Goldberg \cite{BecGo}, Krieger-Schlag
\cite{KS} and the survey by Schlag \cite{Sch} for further details
and references. 

The Strichartz estimates for the perturbed wave equations are in the
form:
\begin{equation}
\left\Vert \frac{\sin\left(t\sqrt{H}\right)}{\sqrt{H}}P_{c}f+\cos\left(t\sqrt{H}\right)P_{c}g\right\Vert _{L_{t}^{p}L_{x}^{q}}\lesssim\|g\|_{\dot{H}^{1}}+\|f\|_{L^{2}}
\end{equation}
with $2<p,\,\frac{1}{2}=\frac{1}{p}+\frac{3}{q}.$ One also has the
endpoint reversed Strichartz estimates:
\begin{equation}
\left\Vert \frac{\sin\left(t\sqrt{H}\right)}{\sqrt{H}}P_{c}f+\cos\left(t\sqrt{H}\right)P_{c}g\right\Vert _{L_{x}^{\infty}L_{t}^{2}}\lesssim\|f\|_{L^{2}}+\|g\|_{\dot{H}^{1}},
\end{equation}
see Theorem \ref{thm:PStriRStrich}. For the other extreme case, we
have
\begin{equation}
\left\Vert \frac{\sin\left(t\sqrt{H}\right)}{\sqrt{H}}P_{c}f+\cos\left(t\sqrt{H}\right)P_{c}g\right\Vert _{L_{x}^{6}L_{t}^{\infty}}\lesssim\|f\|_{L^{2}}+\|g\|_{\dot{H}^{1}}.
\end{equation}

In Section \ref{sec:Prelim} and Section \ref{sec:revered}, we will
systematically pass the estimates for free equations to the perturbed
case via the structure formula of wave operators. This strategy also
works in many other contexts provided that the free solution operators
commute with translations and reflections.

\smallskip

For wave equations in $\mathbb{R}^{3}$, there are several difficulties.
For example, the failure of the $L_{t}^{2}L_{x}^{\infty}$ estimate
and the weakness of decay power $\frac{1}{t}$ in dispersive estimates.
The reversed Strichartz estimates might circumvent these difficulties.
Reversed Strichartz estimates along time-like trajectories play an
important role in the analysis of wave equations of moving potentials.
For example, in \cite{GC2}, we used some preliminary versions of
these estimates to show Strichartz estimates for wave equations with
charge transfer Hamiltonian. 

There are extra difficulties when dealing with time-dependent potentials.
For example, given a general time-dependent potential $V(x,t)$, it
is not clear how to introduce an analog of bound states and a spectral
projection. The evolution might not satisfy group properties any more.
It might also result in the growth of certain norms of the solutions,
see Bourgain's book \cite{Bou}. 

\smallskip

The second part of this paper, we apply the endpoint reversed Strichartz
estimates along trajectories to study the wave equation with one moving
potential:
\begin{equation}
\partial_{tt}u-\Delta u+V\left(x-\vec{Y}(t)\right)u=0
\end{equation}
which appears naturally in the study of stability problems of traveling
solitons. We impose that the trajectories are asymptotic to straight
lines as in \cite{Graf}.

\smallskip

For Schr\"odinger equations with moving potentials, one can find
references and progress, for example in Beceanu-Soffer \cite{BS},
Rodnianski-Schlag-Soffer \cite{RSS}. Compared with Schr\"odinger
equations, wave equations have some natural difficulties, for example
the evolution of bound states of wave equations leads to exponential
growth meanwhile the evolution of bound states of Schr\"odinger equations
is merely multiplied by oscillating factors. We also notice that Lorentz
transformations are space-time rotations, therefore one can not hope
to succeed by the approach used with Schr\"odinger equations based
on Galilei transformations. The geometry becomes much more complicated
in the wave equation context. A crucial step to study wave equations
with moving potentials is to understand the change of the energy under
Lorentz transformations. In Chen \cite{GC2}, we obtained that the
energy stays comparable under Lorentz transformations. In this paper,
we study this by a different approach based on local energy conservation
which requires less decay of the potential. As a byproduct, we also
obtain Agmon's estimates for the decay of eigenfunctions associated
to negative eigenvalues of $H$. 

\subsection{Main results}
\begin{defn}[Admissible trajectories]
A trajectory $\vec{Y}(t)\in\mathbb{R}^{3}$
is said to be admissible if $\vec{Y}(t)$ is $C^{1}$ and there exists
$0\leq\ell<1$ such $\left|\vec{Y}'(t)\right|\leq\ell<1$ for $t\in\mathbb{R}$. 
\end{defn}

Consider the solution to the free wave equation ($H_{0}=-\Delta$),
\begin{equation}
u(x,t)=\frac{\sin\left(t\sqrt{-\Delta}\right)}{\sqrt{-\Delta}}f+\cos\left(t\sqrt{-\Delta}\right)g+\int_{0}^{t}\frac{\sin\left(\left(t-s\right)\sqrt{-\Delta}\right)}{\sqrt{-\Delta}}F(s)\,ds
\end{equation}
and let $\vec{Y}(t)\in\mathbb{R}^{3}$ be an admissible trajectory.
Setting
\begin{equation}
u^{S}(x,t):=u\left(x+\vec{Y}(t),t\right),
\end{equation}
we estimate
\begin{equation}
\sup_{x\in\mathbb{R}^{3}}\int\left|u^{S}(x,t)\right|^{2}dt
\end{equation}
in terms of the initial energy and various norms of $F$. The idea
behind these estimates is that the fundamental solution of the free
wave equation is supported on the light cone. Along a time-like curve,
the propagation will only meet the light cone once. 
\begin{thm}
\label{thm:reversedF}Let $\vec{Y}(t)$ be an admissible trajectory.
First of all, for the standard case, one has
\begin{equation}
\left\Vert u\right\Vert _{L_{x}^{\infty}L_{t}^{2}}\lesssim\|f\|_{L^{2}}+\|g\|_{\dot{H}^{1}}+\left\Vert F\right\Vert _{L_{x}^{\frac{3}{2},1}L_{t}^{2}}.
\end{equation}
Along the trajectory, we have
\begin{equation}
\left\Vert u^{S}(x,t)\right\Vert _{L_{x}^{\infty}L_{t}^{2}}\lesssim\|f\|_{L^{2}}+\|g\|_{\dot{H}^{1}}+\left\Vert \nabla F\right\Vert _{L_{x}^{1}L_{t}^{2}}.\label{eq:freeesti11}
\end{equation}
If $\vec{Y}(t)$ does not change the direction, then
\begin{equation}
\left\Vert u^{S}(x,t)\right\Vert _{L_{x}^{\infty}L_{t}^{2}}\lesssim\|f\|_{L^{2}}+\|g\|_{\dot{H}^{1}}+\left\Vert F\right\Vert _{L_{d}^{1}L_{\widehat{d}}^{2,1}L_{t}^{2}},\label{eq:freeesti12}
\end{equation}
where $d$ is the direction of $\vec{Y}(t)$ and $\hat{d}$ is the
subspace orthogonal to the direction $d$.

Let $\vec{Z}(t)$ be another admissible trajectory, we have the same
estimates as above with $F$ replaced by
\begin{equation}
F^{S'}(x,t):=F\left(x+\vec{Z}(t),t\right).
\end{equation}
More precisely, if $\vec{Z}\left(t\right)$ has the same direction
as $\vec{Y}\left(t\right)$, then \eqref{eq:freeesti12} holds. For
the general case, \eqref{eq:freeesti11} remains valid.
\end{thm}

\begin{rem}
If $\vec{Y}(t)=\vec{Z}(t)$, one can obtain
\begin{equation}
\left\Vert u^{S}(x,t)\right\Vert _{L_{x}^{\infty}L_{t}^{2}}\lesssim\|f\|_{L^{2}}+\|g\|_{\dot{H}^{1}}+\left\Vert F^{S}\right\Vert _{L_{x}^{\frac{3}{2},1}L_{t}^{2}}.
\end{equation}
The other extreme exponents are $L^{\infty}$ for $t$ and $L^{6}$
for $x$. To be more precise, we have the following endpoint estimates.
\end{rem}

\begin{thm}
\label{thm:reversedlocalF}Let $\vec{Y}(t)$ be an admissible trajectory.
First of all, for the standard case, one has
\begin{equation}
\left\Vert u\right\Vert _{L_{x}^{6,2}L_{t}^{\infty}}\lesssim\|f\|_{L^{2}}+\|g\|_{\dot{H}^{1}}+\left\Vert F\right\Vert _{L_{x}^{\frac{6}{5},2}L_{t}^{\infty}}.\label{eq:localrevS-2}
\end{equation}
For the estimates along the trajectory $\vec{Y}(t)$, one has
\begin{equation}
\left\Vert u^{S}(x,t)\right\Vert _{L_{x}^{6,2}L_{t}^{\infty}}\lesssim\|f\|_{L^{2}}+\|g\|_{\dot{H}^{1}}+\left\Vert \nabla F\right\Vert _{L_{x}^{\frac{6}{5}}L_{t}^{1}}.\label{eq:localrevSS-2}
\end{equation}
Let $\vec{Z}(t)$ be another admissible trajectory, we have the same
estimate as \eqref{eq:localrevSS-2} with $F$ replaced by
\begin{equation}
F^{S'}(x,t):=F\left(x+\vec{Z}(t),t\right).
\end{equation}
\end{thm}

We can extend the estimates above to wave equations with perturbed
Hamiltonian,
\begin{equation}
H=-\Delta+V
\end{equation}
by the structure formulas for wave operators developed in Beceanu-Schlag
\cite{Bec1,BeSch}.
\begin{defn}
\label{def:potenialWO}To ensure the structure of wave operators,
we consider the potential $V$ such that
\begin{equation}
V\in B^{1+}\cap L^{2}\left(\mathbb{R}^{3}\right),
\end{equation}
where
\begin{equation}
B^{\beta}=\left\{ V\,|\sum_{k\in\mathbb{Z}}2^{\beta k}\left\Vert \chi_{\left\{ \left|x\right|\in\left[2^{k},2^{k+1}\right]\right\} }(x)V(x)\right\Vert _{L^{2}}<\infty\right\} .
\end{equation}
and $0$ energy is regular for $H=-\Delta+V$ in the sense that
\[
f=-R_{0}\left(0\right)Vf
\]
has no solution $f\in L^{\infty},\,f\neq0$ where $R_{0}\left(0\right)$
is the free resolvent at $0$. See Beceanu-Schlag \cite{BeSch} for
more detailed discussions.
\end{defn}

\begin{thm}
\label{thm:reversedP}Let $\vec{Y}(t)$ be an admissible trajectory.
Suppose
\begin{equation}
H=-\Delta+V
\end{equation}
satisfies the conditions in Definition \ref{def:potenialWO}. Set
\begin{equation}
u(x,t)=\frac{\sin\left(t\sqrt{H}\right)}{\sqrt{H}}P_{c}f+\cos\left(t\sqrt{H}\right)P_{c}g+\int_{0}^{t}\frac{\sin\left(\left(t-s\right)\sqrt{H}\right)}{\sqrt{H}}P_{c}F(s)\,ds
\end{equation}
and
\begin{equation}
u^{S}(x,t):=u\left(x+\vec{Y}(t),t\right),
\end{equation}
where $P_{c}$ is the projection onto the continuous spectrum of $H$.

Then
\begin{equation}
\left\Vert u^{S}(x,t)\right\Vert _{L_{x}^{\infty}L_{t}^{2}}\lesssim\|f\|_{L^{2}}+\|g\|_{\dot{H}^{1}}+\left\Vert \nabla F\right\Vert _{L_{x}^{1}L_{t}^{2}}.
\end{equation}
If $\vec{Y}(t)$ does not change the direction, then
\begin{equation}
\left\Vert u^{S}(x,t)\right\Vert _{L_{x}^{\infty}L_{t}^{2}}\lesssim\|f\|_{L^{2}}+\|g\|_{\dot{H}^{1}}+\left\Vert F\right\Vert _{L_{d}^{1}L_{\widehat{d}}^{2,1}L_{t}^{2}},
\end{equation}
where $d$ is the direction of $\vec{Y}(t)$ and $\hat{d}$ is the
subspace orthogonal to the direction $d$.

Let $\vec{Z}(t)$ be another admissible trajectory, we have the same
estimates as above with $F$ replaced by
\begin{equation}
F^{S'}(x,t):=F\left(x+\vec{Z}(t),t\right)
\end{equation}
similar to the free case in Theorem \ref{thm:reversedF}.
\end{thm}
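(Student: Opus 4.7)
The plan is to reduce Theorem \ref{thm:reversedP} to the free analogue, Theorem \ref{thm:reversedF}, via the intertwining property of the distorted wave operators for the pair $(-\Delta,H)$. Under the hypotheses on $V$ (decay $\langle x\rangle^{-\alpha}$ with $\alpha>3$, no zero eigenfunction or resonance), the wave operator $W:=W_+$ exists, is a bijection from $L^{2}(\mathbb{R}^{3})$ onto $P_{c}L^{2}(\mathbb{R}^{3})$, and satisfies $\varphi(H)P_{c}=W\varphi(-\Delta)W^{*}$ for every bounded Borel function $\varphi$. Specializing $\varphi(\lambda)=\sin(t\sqrt{\lambda})/\sqrt{\lambda}$ and $\varphi(\lambda)=\cos(t\sqrt{\lambda})$, and setting $\tilde f=W^{*}f$, $\tilde g=W^{*}g$, $\tilde F(\cdot,s)=W^{*}P_{c}F(\cdot,s)$, we rewrite $u=Wu_{0}$, where $u_{0}$ is the \emph{free} wave evolution with initial data $(\tilde f,\tilde g)$ and source $\tilde F$.

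Next, I would invoke Beceanu's structure formula, which represents $W$ and $W^{*}$ as $L^{1}$-superpositions of elementary spatial operators (translations, possibly composed with reflections). This yields boundedness of $W$ and $W^{*}$ on $L^{p}(\mathbb{R}^{3})$ for $1\le p\le\infty$, on $\dot H^{1}$, on $\dot W^{1,1}$, and on the anisotropic Lorentz space $L_{d}^{1}L_{\widehat d}^{2,1}$, so that
\[
\|\tilde f\|_{L^{2}}\lesssim\|f\|_{L^{2}},\qquad \|\tilde g\|_{\dot H^{1}}\lesssim\|g\|_{\dot H^{1}},
\]
and the analogous bound holds for $\tilde F$ in either $\dot W_{x}^{1,1}L_{t}^{2}$ or $L_{d}^{1}L_{\widehat d}^{2,1}L_{t}^{2}$ (using Minkowski to pull the $L_{t}^{2}$ norm inside the spatial one). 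Writing $Wh(x)=h(x)+\int K(x,y)h(y)\,dy$ and substituting $y=z+\vec v(t)$, one obtains
\[
u^{S}(x,t)=u_{0}^{S}(x,t)+\int K\bigl(x+\vec v(t),\,z+\vec v(t)\bigr)\,u_{0}^{S}(z,t)\,dz.
\]
Since $\sup_{x}\int|K(x+c,z+c)|\,dz=\sup_{x}\int|K(x,z)|\,dz<\infty$ uniformly in the translation $c=\vec v(t)$, Minkowski in $t$ yields $\|u^{S}\|_{L_{x}^{\infty}L_{t}^{2}}\lesssim\|u_{0}^{S}\|_{L_{x}^{\infty}L_{t}^{2}}$, and Theorem \ref{thm:reversedF} applied to $u_{0}$ along $\vec v(t)$ bounds the right-hand side by $\|\tilde f\|_{L^{2}}+\|\tilde g\|_{\dot H^{1}}+\|\tilde F\|_{\dot W_{x}^{1,1}L_{t}^{2}}$ (or its directional refinement). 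The wave-operator bounds close the argument. The $F^{S'}$-variant is handled identically, by applying the inhomogeneous part of Theorem \ref{thm:reversedF} along $\vec\mu(t)$ to $\tilde F^{S'}(z,s)=\tilde F(z+\vec\mu(s),s)$.

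The main obstacle is establishing the required boundedness of $W^{*}$ on the $L^{1}$-endpoint source norms $\dot W_{x}^{1,1}$ and $L_{d}^{1}L_{\widehat d}^{2,1}$. The $L^{2}$- and $\dot H^{1}$-mapping properties of $W$ are classical consequences of completeness of the wave operators together with Kato-smoothing style estimates, but the $L^{1}$ endpoints sit at the boundary of Beceanu's theorem: commuting $\nabla$ through $W$ requires that each building block of the structure formula preserve $\dot W^{1,1}$, and the Lorentz factor demands a real-interpolation refinement of these bounds. Once these mapping properties are in hand, the reduction to the free case is essentially automatic, since every operator involved acts separately in $x$ and $t$ and therefore commutes with taking $L_{t}^{2}$-norms via Minkowski.
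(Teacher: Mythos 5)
Your overall strategy --- conjugating by the wave operator and invoking Beceanu's structure formula to reduce to the free estimate --- is the same as the paper's. But the reduction as you have written it does not close, and the gap is precisely at the point you treat as routine.

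You collapse the structure formula into a generic $L^{1}$-kernel $Wh(x)=h(x)+\int K(x,y)h(y)\,dy$ and then try to run Minkowski in $t$ using the Schur-type bound
\begin{equation*}
\sup_{x}\int\bigl|K(x+c,z+c)\bigr|\,dz=\sup_{x}\int\bigl|K(x,z)\bigr|\,dz,
\end{equation*}
uniformly in $c=\vec v(t)$. The identity is true, but Minkowski requires you to control $\bigl\|K\bigl(x+\vec v(t),z+\vec v(t)\bigr)u_{0}^{S}(z,t)\bigr\|_{L_{t}^{2}}$ for each fixed $z$ \emph{before} integrating in $z$. To separate the kernel from $u_{0}^{S}$ you must bound $K\bigl(x+\vec v(t),z+\vec v(t)\bigr)$ in $L_{t}^{\infty}$ and then integrate the result in $z$; this produces $\int_{z}\sup_{t}\bigl|K\bigl(x+\vec v(t),z+\vec v(t)\bigr)\bigr|\,dz$, with the supremum \emph{inside} the integral, which is strictly stronger than the quantity your identity controls and need not be finite. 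A toy example makes this visible: take $K(x,z)=\phi(x)\psi(z-S_{\omega}x)$ with $\psi$ a bump; then $\sup_{t}\bigl|\psi\bigl(z-S_{\omega}x+\vec v(t)-S_{\omega}\vec v(t)\bigr)\bigr|$ is the indicator of a tube and is not integrable in $z$.

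The resolution, which is implicit in the paper's proof and essential, is not to collapse the structure formula into a kernel. Keep the form $Wh(x)=h(x)+\int_{\mathbb{S}^{2}}\int_{\mathbb{R}^{3}}g(x,y,\omega)\,h\bigl(S_{\omega}x+y\bigr)\,dy\,d\omega$ and note that
\begin{equation*}
(Wh)\bigl(x+\vec v(t)\bigr)=h\bigl(x+\vec v(t)\bigr)+\int_{\mathbb{S}^{2}}\int_{\mathbb{R}^{3}}g\bigl(x+\vec v(t),y,\omega\bigr)\,h\bigl(S_{\omega}x+y+S_{\omega}\vec v(t)\bigr)\,dy\,d\omega.
\end{equation*}
Now the $t$-dependent scalar factor is bounded uniformly by $\|g(\cdot,y,\omega)\|_{L_{x}^{\infty}}$, which \emph{is} integrable in $(y,\omega)$, and the shift that lands inside the free evolution is $S_{\omega}\vec v(t)$, which is again an admissible trajectory (a fixed reflection preserves speed, $C^{1}$-regularity, and, when $\vec v$ has constant direction, the constant-direction hypothesis, with $d$ replaced by $S_\omega d$). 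So each summand is controlled by Theorem \ref{thm:reversedF} applied along $S_{\omega}\vec v(t)$, and Minkowski in $(y,\omega)$ closes the argument without ever needing an $L_{z}^{1}$ estimate on a $t$-supped kernel. This observation --- that reflections map admissible trajectories to admissible trajectories --- is the load-bearing step and must appear explicitly.

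Separately, you are right to flag that the reduction via $\tilde F=W^{*}P_{c}F$ demands boundedness of $W^{*}$ on $\dot W_{x}^{1,1}$ (and the anisotropic Lorentz space); that is not a stated consequence of Theorem \ref{thm:structure}, since commuting $\nabla$ through the structure formula produces a term involving $\nabla_{x}g(x,y,\omega)$ whose integrability is not asserted. The paper's proof of Theorem \ref{thm:PStriRStrich-1} in fact treats only the homogeneous half-wave piece $\frac{\sin(t\sqrt H)}{\sqrt H}P_{c}f$ and does not address the inhomogeneous term along a trajectory at all, so this mapping property genuinely needs either a separate argument (e.g., additional regularity hypotheses on $V$ giving a Calder\'on--Zygmund--type bound for the wave-operator kernel) or a different route to the source estimate that avoids applying $W^{*}$ to $F$ before the free estimate.
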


We also have the perturbed version of the second endpoint reversed
space-time estimates.
\begin{thm}
\label{thm:reveredLocalP}Let $\vec{Y}(t)$ be an admissible trajectory.
Suppose
\begin{equation}
H=-\Delta+V
\end{equation}
satisfies the conditions in Definition \ref{def:potenialWO}. Set
\begin{equation}
u(x,t)=\frac{\sin\left(t\sqrt{H}\right)}{\sqrt{H}}P_{c}f+\cos\left(t\sqrt{H}\right)P_{c}g+\int_{0}^{t}\frac{\sin\left(\left(t-s\right)\sqrt{H}\right)}{\sqrt{H}}P_{c}F(s)\,ds
\end{equation}
and
\begin{equation}
u^{S}(x,t):=u\left(x+\vec{Y}(t),t\right),
\end{equation}
where $P_{c}$ is the projection onto the continuous spectrum of $H$.
First of all, for the standard case, one has
\begin{equation}
\left\Vert u\right\Vert _{L_{x}^{6,2}L_{t}^{\infty}}\lesssim\|f\|_{L^{2}}+\|g\|_{\dot{H}^{1}}+\left\Vert F\right\Vert _{L_{x}^{\frac{6}{5},2}L_{t}^{\infty}}.\label{eq:localrevS-1-1}
\end{equation}
Consider the estimates along the trajectory $\vec{Y}(t)$, one has
\begin{equation}
\left\Vert u^{S}(x,t)\right\Vert _{L_{x}^{6,2}L_{t}^{\infty}}\lesssim\|f\|_{L^{2}}+\|g\|_{\dot{H}^{1}}+\left\Vert \nabla F\right\Vert _{L_{x}^{\frac{6}{5}}L_{t}^{1}}.\label{eq:localrevSS-1-1}
\end{equation}
Let $\vec{Z}(t)$ be another admissible trajectory, we have the same
estimates as above with $F$ replaced by
\begin{equation}
F^{S'}(x,t):=F\left(x+\vec{Z}(t),t\right)
\end{equation}
similar to the free case in Theorem \ref{thm:reversedlocalF}.
\end{thm}

We will rely on the structure formula of the wave operators by Beceanu-Schlag
\cite{BeSch}. Although one can obtain similar results without using
the structure formula, see \cite{GC2}, the goal of our exposition
is the illustrate a general strategy that one can pass the estimates
for the free evolution to the perturbed one via the structure formula
provided there are some symmetries of the free solution operators.

\smallskip

As applications of the estimates above, we study both regular and
reversed Strichartz estimates for scattering states to a wave equation
with a moving potential with the trajectory asymptotically like a
straight line. Suppose $\vec{Y}(t)\in\mathbb{R}^{3}$ is a trajectory
such that there exists $\vec{\mu}\in\mathbb{R}^{3},\,\left|\vec{\mu}\right|<1$
with
\begin{equation}
\left|\vec{Y}(t)-\vec{\mu}t\right|\lesssim\left\langle t\right\rangle ^{-\beta},\,\beta>1.\label{eq:trajcond}
\end{equation}
Consider
\begin{equation}
\partial_{tt}u-\Delta u+V\left(x-\vec{Y}(t)\right)u=0\label{eq:meq}
\end{equation}
with initial data
\begin{equation}
u(x,0)=g(x),\,u_{t}(x,0)=f(x).
\end{equation}
\begin{rem}
Actually, with much more complicated and technical analysis, one can
replace the decay rate with much weaker condition using the idea from
Beceanu \cite{Bec3} and Nakanishi-Schlag \cite{NS2} adapted to the
wave equation. But for simplicity, we assume this decay rate as some
analysis in Rodnianski-Schlag-Soffer \cite{RSS2}.
\end{rem}

An indispensable tool we need to study wave equations with moving
potentials is the Lorentz transformations. From the setting above,
without loss of generality, we assume $\vec{\mu}$ is along $\overrightarrow{e_{1}}$.
We apply the Lorentz transformation $L$ with respect to a moving
frame with speed $\left|\mu\right|<1$ along the $x_{1}$ direction.
Writing down the Lorentz transformation explicitly, we have
\begin{equation}
\begin{cases}
t'=\gamma\left(t-\mu x_{1}\right)\\
x_{1}'=\gamma\left(x_{1}-\mu t\right)\\
x_{2}'=x_{2}\\
x_{3}'=x_{3}
\end{cases}\label{eq:LorentzT}
\end{equation}
with
\begin{equation}
\gamma=\frac{1}{\sqrt{1-\left|\mu\right|^{2}}}.
\end{equation}
We can also write down the inverse transformation of the one above:
\begin{equation}
\begin{cases}
t=\gamma\left(t'+vx_{1}'\right)\\
x_{1}=\gamma\left(x_{1}'+\mu t'\right)\\
x_{2}=x_{2}'\\
x_{3}=x_{3}'
\end{cases}.\label{eq:InvLorentT}
\end{equation}
Under the Lorentz transformation $L$, if we use the subscript $L$
to denote a function with respect to the new coordinate $\left(x',t'\right)$,
we have
\begin{equation}
u_{L}\left(x_{1}',x_{2}',x_{3}',t'\right)=u\left(\gamma\left(x_{1}'+\mu t'\right),x_{2}',x_{3}',\gamma\left(t'+\mu x_{1}'\right)\right)\label{eq:Lcoordinate}
\end{equation}
and
\begin{equation}
u(x,t)=u_{L}\left(\gamma\left(x_{1}-\mu t\right),x_{2},x_{3},\gamma\left(t-v\mu x\right)\right).\label{eq:ILcoordinate}
\end{equation}
In order to study the equation with time-dependent potentials, we
need to introduce a suitable projection. Given $\vec{\mu}$ as above,
we denote
\[
H=-\Delta+V\left(\sqrt{1-\left|\mu\right|^{2}}x_{1},x_{2},x_{3}\right).
\]
\begin{defn}
\label{def:potenialWO-1}To consider the moving potential problem,
for given $\vec{\mu},$we assume that
\[
\left|V\right|\lesssim\frac{1}{\left\langle x\right\rangle ^{\alpha}},\ \alpha>3
\]
and there is no zero eigenfunctions nor resonances for
\[
H=-\Delta+V\left(\sqrt{1-\left|\mu\right|^{2}}x_{1},x_{2},x_{3}\right).
\]
Recall that $\psi$ is a resonance at $0$ if it is a distributional
solution of the equation $H\psi=0$ which belongs to the space $L^{2}\left(\left\langle x\right\rangle ^{-\sigma}dx\right):=\left\{ f:\,\left\langle x\right\rangle ^{-\sigma}f\in L^{2}\right\} $
for any $\sigma>\frac{1}{2}$, but not for $\sigma=\frac{1}{2}.$ 
\end{defn}

\begin{rem}
Here, we impose the spectral conditions for the Schr\"odinger operator
with respect to a fixed $\vec{\mu}$. Alternatively, we can impose
the spectral conditions on $-\Delta+V\left(x\right)$ and the consider
the Lorent boost of $V$. For this setting, see for example \cite{CJ}
where the potential is given by the Lorentz boost of a soliton.
\end{rem}

Let $m_{1},\,\ldots,\,m_{w}$ be the normalized bound states of $H$
associated to the negative eigenvalues $-\lambda_{1}^{2},\,\ldots,\,-\lambda_{w}^{2}$
respectively (notice that by our assumptions, $0$ is not an eigenvalue).
In other words, we assume that
\begin{equation}
Hm_{i}=-\lambda_{i}^{2}m_{i},\,\,\,m_{i}\in L^{2},\,\lambda_{i}>0.
\end{equation}
We denote by $P_{b}$ the projections on the the bound states of $H$
and let $P_{c}=Id-P_{b}$. To be more explicit, we have
\begin{equation}
P_{b}=\sum_{j=1}^{\ell}\left\langle \cdot,m_{j}\right\rangle m_{j}.
\end{equation}
With Lorentz transformations $L$ associated to the moving frame $\left(x-\vec{\mu}t,t\right)$,
we use the subscript $L$ to denote a function under the new frame
$\left(x',t'\right)$.
\begin{defn}[Asymptotic orthogonality]
\label{AO}Let $u$ solve
\begin{equation}
\partial_{tt}u-\Delta u+V\left(x-\vec{Y}(t)\right)u=0,\label{eq:eqBSsec-1}
\end{equation}
\[
u(x,0)=g(x),\,u_{t}(x,0)=f(x),
\]
where the potential and the trajectory satisfy Definition \ref{def:potenialWO-1}
and condition \eqref{eq:trajcond} respectively. If $u$ also satisfies
\begin{equation}
\left\Vert P_{b}u_{L}(t')\right\Vert _{L_{x'}^{2}}\rightarrow0\,\,\,t,t'\rightarrow\infty,\label{eq:ao2-1}
\end{equation}
we call it a scattering state. 
\end{defn}

\begin{rem}
The existence of the scattering state here can be understood as the
subspace which generated sub-exponential growth in the setting of
exponential dichotomies, see \cite{CJ}. In particular, in \cite{CJ},
this subspace is proved to be of co-dimension $w$ using the notations
 above.
\end{rem}

\begin{thm}[Strichartz estimates]
\label{thm:Stri}Suppose $u$ is a scattering
state in the sense of Definition \ref{AO} which solves the equation
\eqref{eq:meq}.\textup{ }Then for $p>2$ and $(p,q)$ satisfying
\begin{equation}
\frac{1}{2}=\frac{1}{p}+\frac{3}{q},
\end{equation}
we have
\begin{equation}
\|u\|_{L_{t}^{p}\left([0,\infty),\,L_{x}^{q}\right)}\lesssim\|f\|_{L^{2}}+\|g\|_{\dot{H}^{1}}.
\end{equation}
\end{thm}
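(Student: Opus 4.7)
I would use the Lorentz boost $L$ from (\ref{eq:LorentzT}) with velocity $\vec\mu$ to reduce the moving-potential equation to a wave equation $\partial_{t't'}u_L+Hu_L+Wu_L=0$ with the stationary Hamiltonian $H=-\Delta+V(\sqrt{1-|\mu|^2}x_1,x_2,x_3)$ plus a perturbation $W$. Indeed, splitting $V(x-\vec v(t))=V(x-\vec\mu t)+[V(x-\vec v(t))-V(x-\vec\mu t)]$, the first term becomes precisely the stationary potential defining $H$, while the second becomes $W(x',t')$, whose size is governed by $|\vec v(t)-\vec\mu t|\lesssim\langle t\rangle^{-\beta}$ with $\beta>1$ (together with the decay of $\nabla V$), so that $W$ is spatially localized and integrable in $t'$ uniformly in $x'$. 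The Lorentz invariance of the energy (the comparability result from \cite{GC2}) shows that the transformed initial data have $L^2\times\dot H^1$ norm comparable to $\|f\|_{L^2}+\|g\|_{\dot H^1}$.

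\textbf{Continuous-spectrum part.} Decompose $u_L=P_cu_L+P_bu_L$. For $P_cu_L$, apply Duhamel in the boosted frame and use the standard Strichartz estimates for $\cos(t\sqrt H)P_c$ and $\frac{\sin(t\sqrt H)}{\sqrt H}P_c$ recalled in Section~\ref{sec:Intro} to get
\begin{equation*}
\|P_cu_L\|_{L^p_{t'}L^q_{x'}}\lesssim\|f\|_{L^2}+\|g\|_{\dot H^1}+\|P_c(Wu_L)\|_{L^1_{t'}L^2_{x'}}.
\end{equation*}
To control the forcing term I would pair this with the endpoint reversed Strichartz estimate for the $P_c$-projected $H$-evolution along the trajectory $\vec\mu t'$ (Theorem~\ref{thm:reversedP}) to get a companion bound for $\|u_L\|_{L^\infty_{x'}L^2_{t'}}$; the spatial localization of $\nabla V$ and the $L^1_{t'}$ integrability of $\vec v(t)-\vec\mu t$ (guaranteed by $\beta>1$) then let me close a bootstrap in which the $u_L$-factor inside $Wu_L$ is absorbed on the left-hand side.

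\textbf{Bound-state part.} Writing $P_bu_L(t')=\sum_ja_j(t')m_j$, projecting the equation on $m_j$ gives a forced hyperbolic ODE $\ddot a_j-\lambda_j^2a_j=-\langle Wu_L,m_j\rangle$ (since $Hm_j=-\lambda_j^2m_j$ forces $\cos(t'\sqrt H)m_j=\cosh(\lambda_jt')m_j$, which grows exponentially). The scattering-state hypothesis $\|P_bu_L(t')\|_{L^2}\to 0$ from Definition~\ref{AO} selects uniquely the stable representation
\begin{equation*}
a_j(t')=\frac{1}{2\lambda_j}\int_{t'}^{\infty}e^{-\lambda_j(s-t')}\langle Wu_L(s),m_j\rangle\,ds.
\end{equation*}
The exponential weight in $s-t'$, combined with the exponential spatial decay of the eigenfunction $m_j$ (Agmon, which the paper establishes as a byproduct) and the $L^1_{t'}$ bound on $\langle Wu_L,m_j\rangle$ from the previous step, produces $\|P_bu_L\|_{L^p_{t'}L^q_{x'}}\lesssim\|f\|_{L^2}+\|g\|_{\dot H^1}$.

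\textbf{Closing and main obstacle.} Summing the two contributions and inverting the Lorentz transformation (whose Jacobian is the constant $\gamma$ and which preserves the admissibility $\frac1p+\frac3q=\frac12$) transports the bound back to the original $(x,t)$-frame, yielding the theorem. I expect the main technical obstacle to be closing the bootstrap in Step~2: the commutator $[P_c,W]u_L=-[P_b,W]u_L$ couples the continuous-spectrum estimate to $P_bu_L$, so one must feed the smallness of $P_bu_L$ (via the scattering condition) back into the forcing term, and then use the $\beta>1$ temporal decay to make the absorption genuinely small on long intervals. This balancing of reversed-Strichartz (on $L^\infty_xL^2_t$) with standard Strichartz (on $L^p_tL^q_x$), mediated by the spatial localization of $\nabla V$, is the crux of the proof.
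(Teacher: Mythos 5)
Your approach is genuinely different from the paper's. The paper proves this Strichartz estimate directly in the original $(x,t)$-frame, without ever boosting: it sets $A=\sqrt{-\Delta}$, writes $U=Au+i\partial_tu$ so that $i\partial_tU=AU+V(x-\vec v(t))u$, and then treats $V(x-\vec v(t))u$ as a source for the \emph{free} evolution $e^{\pm itA}$. Writing $V=V_1V_2$ and applying the Christ--Kiselev lemma, the Duhamel term is controlled by a $TT^*$ argument whose only nonstandard ingredient is the local energy decay for the free half-wave propagator along a moving line,
\[
\bigl\|(1+|x-\vec\mu t|)^{-1/2-\epsilon}e^{-itA}\phi\bigr\|_{L^2_{t,x}}\lesssim\|\phi\|_{L^2},
\]
which follows from the free estimate by a Lorentz boost. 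This reduces everything to bounding $\|V_2(\cdot-\vec v(t))u\|_{L^2_{t,x}}$, i.e.\ a weighted space-time $L^2$ norm of the \emph{actual solution}; that bound is precisely what Theorem~\ref{thm:revemoving} supplies, so the Strichartz proof itself contains no bound-state decomposition, no boosted-frame Duhamel, and no bootstrap. In other words, the heavy lifting --- scattering condition, ODE for the bound-state coefficient $a(t')$, reversed Strichartz along trajectories, Agmon decay --- is all packaged into a \emph{separate, previously established} theorem, and the Strichartz estimate is then a short consequence via Christ--Kiselev.

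Your plan instead bundles the entire machinery (boost, $P_b/P_c$ split, Duhamel with perturbed $H$-propagator, bound-state ODE) into a single proof of the $L^p_tL^q_x$ estimate and then tries to close a bootstrap for the source term $Wu_L$. The bootstrap as you describe it has a real gap: you propose to absorb the $u_L$-factor in $\|Wu_L\|$ on the left-hand side, using the reversed Strichartz estimate for $\|u_L\|_{L^\infty_{x'}L^2_{t'}}$. But the dual norm required for the inhomogeneous reversed Strichartz bound is $\|Wu_L\|_{L^{3/2,1}_{x'}L^2_{t'}}$ (or $\dot W^{1,1}_{x'}L^2_{t'}$), and controlling this in terms of $\|u_L\|_{L^\infty_{x'}L^2_{t'}}$ gives a constant of order $\|\langle x'\rangle^{-\alpha}\|_{L^{3/2,1}}$, which is finite but not small --- the temporal decay factor $\langle t'\rangle^{-\beta}$ disappears when you take $L^2_{t'}$ first. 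So the absorption step as written does not close; you would need to exploit the smallness more carefully, e.g.\ by splitting $[0,\infty)$ into a bounded interval (handled by local theory/energy) and a tail where $\langle t'\rangle^{-\beta}$ is genuinely small. The paper sidesteps this entirely because at the Strichartz stage it only needs the already-known $L^\infty_xL^2_t$ bound on $u$, not a fixed-point for it. If you wish to keep your route, you should prove the reversed Strichartz estimate along trajectories (the analogue of Theorem~\ref{thm:revemoving}) \emph{first}, carefully closing the bootstrap there, and then deduce the $L^p_tL^q_x$ estimate either by the paper's Christ--Kiselev argument in the original frame or by the boosted-frame Duhamel with the reversed-Strichartz bound now available as an input rather than as an unknown.
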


The theorem above can be extended to the inhomogeneous case, see for
example \cite{GC2}.

Secondly, one has the energy estimate:
\begin{thm}[Energy estimate]
\label{thm:Energy}Suppose $u$ is a scattering
state in the sense of Definition \ref{AO} which solves the equation
\eqref{eq:meq}.\textup{ }Then we have
\begin{equation}
\sup_{t\geq0}\left(\|\nabla u(t)\|_{L^{2}}+\|u_{t}(t)\|_{L^{2}}\right)\lesssim\|f\|_{L^{2}}+\|g\|_{\dot{H}^{1}}.\label{eq:StriCharWOB-1}
\end{equation}
\end{thm}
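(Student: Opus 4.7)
I would follow the strategy used for Theorem~\ref{thm:Stri}: apply the Lorentz transformation $L$ associated with the limiting velocity $\vec\mu$ to pass to a reference frame in which the moving potential becomes essentially stationary. In the new coordinates $(x',t')$, $u_L$ satisfies
\[
\partial_{t't'}u_L-\Delta_{x'}u_L+V_\mu(x')u_L=-W(x',t')u_L,
\]
where $V_\mu(x')=V(\sqrt{1-|\mu|^2}x_1',x_2',x_3')$ is the stationary ``rest-frame'' potential appearing in the definition of $H$, and $W$ is a remainder produced by the decaying correction $\vec v(t)-\vec\mu t=O(\langle t\rangle^{-\beta})$ with $\beta>1$; the integrable decay of this correction guarantees that $W$ contributes a finite $L^1_{t'}$ perturbation in any energy-type estimate.

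For the unperturbed stationary equation (i.e., $W\equiv 0$) I would use the natural conserved quantity
\[
\mathcal{E}_\mu(t')=\|\partial_{t'}u_L\|_{L^2}^2+\langle H_\mu u_L,u_L\rangle,\qquad H_\mu=-\Delta+V_\mu,
\]
and split $u_L=P_c u_L+P_b u_L$ along the spectral decomposition of $H_\mu$. Under the standing hypothesis that $H_\mu$ has neither eigenvalue nor resonance at $0$, one has $H_\mu\geq 0$ on the continuous-spectrum subspace and the norm equivalence $\|\sqrt{H_\mu}P_c\phi\|_{L^2}\sim\|\nabla P_c\phi\|_{L^2}$, so conservation of $\mathcal{E}_\mu$ directly controls $\|\nabla P_cu_L\|_{L^2}^2+\|\partial_{t'}P_cu_L\|_{L^2}^2$ by the initial energy. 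For the bound-state piece the evolution reduces to the finite collection of ODEs $\ddot c_j=\lambda_j^2 c_j$, one for each negative eigenvalue $-\lambda_j^2$ of $H_\mu$; the scattering-state condition of Definition~\ref{AO} forces the initial vector $(c_j(0),\dot c_j(0))$ to lie on the stable manifold of each such ODE, so $P_bu_L(t')$ in fact decays exponentially and is uniformly bounded by the initial data.

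Next I would reintroduce $W$ as a source via Duhamel. The decay hypothesis on $\vec v(t)-\vec\mu t$ together with the assumed decay on $V$ and $\nabla V$ yields $\|Wu_L\|_{L^1_{t'}L^2_{x'}}$ controlled by the running energy, and Gronwall's inequality closes the estimate to produce a uniform bound
\[
\sup_{t'\geq 0}\left(\|\nabla_{x'}u_L(t')\|_{L^2}+\|\partial_{t'}u_L(t')\|_{L^2}\right)\lesssim\|f_L\|_{L^2}+\|g_L\|_{\dot H^1}.
\]
Finally, I would pull back through the inverse Lorentz transformation; since $L$ preserves the energy up to constants depending only on $|\mu|<1$ (see \cite{GC2} and the local-energy-conservation argument developed earlier in the paper), this transfers to the desired bound on $\|\nabla u\|_{L^2}+\|u_t\|_{L^2}$ in the original coordinates.

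The hard part is the bound-state analysis. The scattering-state condition is formulated asymptotically ($\|P_bu_L(t')\|\to 0$), whereas the energy argument requires a uniform bound at every finite $t'\geq 0$. One must therefore verify that the asymptotic vanishing of $P_bu_L$ is equivalent to the initial vector of each unstable mode lying on the stable manifold of $\ddot c_j=\lambda_j^2 c_j$; this is a finite-dimensional check once the eigenstructure of $H_\mu$ is in hand, but it is the structural step that distinguishes scattering states from generic solutions and that actually allows the energy argument to close without exponential growth.
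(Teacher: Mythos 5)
Your proposal follows a genuinely different route from the paper. The paper proves Theorem~\ref{thm:Energy} (as Theorem~\ref{thm:EnergyOne}) by the same $U=Au+iu_t$ reduction it uses for Strichartz estimates: Duhamel, factor $V=V_1V_2$, show the kernel operator $\widetilde K$ is bounded $L^2_{t,x}\to L^\infty_tL^2_x$ by duality plus the moving-frame local energy decay \eqref{eq:DAFT}, and then close with the weighted estimate \eqref{eq:moveweighted}, which is an immediate consequence of the endpoint reversed Strichartz bound $\|u^S\|_{L^\infty_xL^2_t}\lesssim\|f\|_{L^2}+\|g\|_{\dot H^1}$ of Theorem~\ref{thm:revemoving}. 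You instead propose a direct energy argument in the Lorentz-transformed frame with $P_c/P_b$ decomposition and Gronwall.

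The gap is in your treatment of the bound-state part, and it is not merely a ``finite-dimensional check.'' Once $W\neq 0$, the coefficient $a(t')$ of a bound state $m$ satisfies $\ddot a-\lambda^2 a+c(t')a+h(t')=0$ with $h(t')=\langle W r_L,m\rangle$ (the paper's \eqref{eq:aode}); the scattering condition then forces the \emph{global-in-time} stability relation \eqref{eq:stability} involving $\int_0^\infty e^{-\lambda s}N(s)\,ds$, not merely a constraint on the initial vector. To show this makes $a$ bounded you must control $\int_0^\infty|h(t')|\,dt'$, and the paper does this via the exponential localization of $m$ (Agmon) together with the pointwise-in-space estimate $\|r_L\|_{L^\infty_{x'}L^2_{t'}}\lesssim\|f\|_{L^2}+\|g\|_{\dot H^1}$ of Theorem~\ref{thm:endmove}. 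That $L^\infty_xL^2_t$ bound is precisely what decouples the system: it gives a bound on the forcing that does not feed back into the Gronwall loop. Your Gronwall closure for the continuous part needs $\|Wu_L\|_{L^1_{t'}L^2_{x'}}$, which through Hardy requires the \emph{full} $\|\nabla u_L\|_{L^2}$, including the $P_b$ contribution; so as written the $P_c$ estimate needs the $P_b$ estimate and vice versa, and without the reversed-Strichartz input there is no obvious way to break the loop. You should either quote Theorem~\ref{thm:revemoving}/\ref{thm:endmove} explicitly at this point or reproduce its proof; once you do, your remaining steps (energy equivalence via Theorem~\ref{thm:generalC}, Hardy for $W$, Gronwall) are sound. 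One minor further caution: your claimed norm equivalence $\|\sqrt{H_\mu}P_c\phi\|_{L^2}\simeq\|\nabla P_c\phi\|_{L^2}$ is correct under the standing no-eigenvalue/no-resonance hypothesis but also deserves a citation or a short argument, since it is not automatic for a general short-range $V$.
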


We also obtain the endpoint reversed $L_{x}^{\infty}L_{t}^{2}$ Strichartz
estimates for $u$. 
\begin{thm}[Endpoint reversed Strichartz estimate]
\label{thm:EndRStri}Let $\vec{Z}(t)$
be an admissible trajectory. Suppose $u$ is a scattering state in
the sense of Definition \ref{AO} which solves the equation \eqref{eq:meq}.\textup{
}Then
\begin{equation}
\sup_{x\in\mathbb{R}^{3}}\int_{0}^{\infty}\left|u(x,t)\right|^{2}dt\lesssim\left(\|f\|_{L^{2}}+\|g\|_{\dot{H}^{1}}\right)^{2},
\end{equation}
and
\begin{equation}
\sup_{x\in\mathbb{R}^{3}}\int_{0}^{\infty}\left|u(x+\vec{Z}(t),t)\right|^{2}dt\lesssim\left(\|f\|_{L^{2}}+\|g\|_{\dot{H}^{1}}\right)^{2}.
\end{equation}
\end{thm}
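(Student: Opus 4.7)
The plan is to reduce the moving-potential equation to an essentially stationary-potential one via a Lorentz boost along the asymptotic velocity $\vec{\mu}$, and then apply the perturbed endpoint reversed Strichartz estimate of Theorem \ref{thm:reversedP}. Without loss of generality $\vec{\mu} = |\mu|\vec{e}_1$. Performing the Lorentz transformation \eqref{eq:LorentzT} and using the invariance of the d'Alembertian, the equation in the new frame $(x',t')$ takes the form
\begin{equation}
\partial_{t't'} u_L - \Delta_{x'} u_L + V_\mu(x')\, u_L \;=\; R(x',t')\, u_L, \qquad V_\mu(x') := V\bigl(\sqrt{1-|\mu|^2}\,x_1', x_2', x_3'\bigr),
\end{equation}
with a time-dependent remainder $R := V_\mu - V(\cdot - \vec{v}(t))$ arising from the deviation $|\vec{v}(t) - \vec{\mu}t|\lesssim \langle t\rangle^{-\beta}$. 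Because $\beta>1$ and $\nabla V$ decays, the mean value theorem gives a pointwise bound of the form $|R(x',t')|\lesssim \langle t'\rangle^{-\beta}\,|\nabla V|(\cdots)$, integrable in $t'$.

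Next I would decompose $u_L$ using the spectral projections $P_c,\,P_b$ of $H := -\Delta + V_\mu$. The scattering-state hypothesis $\|P_b u_L(t')\|\to 0$ is precisely what rules out the exponentially growing bound-state modes in the long-time limit. Writing Duhamel for $u_L$ with forcing $R u_L$ and applying Theorem \ref{thm:reversedP} to the $P_c$-projected homogeneous and inhomogeneous parts along the trajectory $\vec{h}_L$ obtained by Lorentz-pulling back $\vec{h}$ (which remains admissible, as a Lorentz boost between sub-luminal frames preserves time-likeness), one gets
\begin{equation}
\bigl\|u_L\bigl(x' + \vec{h}_L(t'),t'\bigr)\bigr\|_{L^\infty_{x'}L^2_{t'}}\;\lesssim\;\|f_L\|_{L^2} + \|g_L\|_{\dot{H}^1} + \|R u_L\|_{\dot{W}^{1,1}_{x'}L^2_{t'}}.
\end{equation}
To close the source term I would split $\nabla(R u_L) = (\nabla R)\, u_L + R\,\nabla u_L$. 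The first piece is bounded using $\|\nabla V\|_{L^1}$ against the same quantity on the left, absorbed after a time-slicing iteration that exploits the $\langle t'\rangle^{-\beta}$ factor; the second piece is handled by Cauchy--Schwarz against a weight $\langle y\rangle^{1/2+\epsilon}$ (which is square-integrable against $|V|$ since $\alpha>3$) combined with the local energy decay estimate Theorem \ref{thm:LEnergy} applied to $\nabla u_L$. Transferring the bound back to the original coordinates uses the comparability of energies under Lorentz boosts recalled in the introduction, and the invariance of admissibility of trajectories under such boosts.

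The main obstacle is closing this bootstrap cleanly: since $R u_L$ is linear in $u_L$, whatever is re-introduced on the right-hand side of the Duhamel estimate must be strictly smaller than the quantity on the left. The $\langle t'\rangle^{-\beta}$ decay only supplies this on finite time windows, so the global estimate requires an iteration that trades temporal decay of $R$ for smallness of the iteration constant. A second delicate point is making the scattering-state hypothesis \emph{quantitative}: the qualitative limit $\|P_b u_L(t')\|\to 0$ must be converted, via energy conservation together with a compactness or density argument, into genuine control on the bound-state tail sufficient to discard it from the Duhamel representation without losing the endpoint character of the estimate.
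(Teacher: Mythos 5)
Your opening moves are the right ones and match the paper: Lorentz boost to the $\vec{\mu}$-frame, rewrite the equation as $\partial_{t't'}u_{L}+Hu_{L}=-Mu_{L}$ with $M=[V(\cdot-\vec{v}(t))-V(\cdot-\vec{\mu}t)]_{L}$ decaying like $\langle t'\rangle^{-\beta}$, split by the spectral projections of $H$, and note that admissibility of the trajectory survives the boost. After that, however, the plan has two genuine gaps. First, the invocation of Theorem \ref{thm:LEnergy} to control $R\nabla u_{L}$ is circular: that local energy decay is proved in Section \ref{sec:one} as a consequence of precisely the weighted estimate \eqref{eq:moveweighted}, which is the second conclusion of the theorem you are proving. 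Second, ``the scattering hypothesis rules out the growing bound-state modes'' is not a step but the whole difficulty. The qualitative limit $\|P_b u_L(t')\|\to 0$ does not, by itself, produce a quantitative bound; the paper gets one by projecting the boosted equation onto the bound state $m$ to obtain the scalar ODE $\ddot a(t')-\lambda^{2}a(t')+c(t')a(t')+h(t')=0$ with $c=\langle Mm,m\rangle$ and $h=\langle Mr_{L},m\rangle$, solving it by Duhamel for $\partial_{t't'}-\lambda^{2}$, observing that $a(t')\to0$ forces the codimension-one stability condition $a(0)+\lambda^{-1}\dot a(0)+\lambda^{-1}\int_{0}^{\infty}e^{-\lambda s}N(s)\,ds=0$, and then reading off that under this condition all $L^{p}_{t'}$ norms of $a$ are controlled by $\|h\|_{L^{1}_{t'}}$. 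A compactness or density argument will not substitute for this.

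The third issue is that you have chosen the harder route for the source term, and it is this choice that manufactures the bootstrap you cannot close. You write Duhamel for the full $u_{L}$ and therefore must estimate $\|Mu_{L}\|_{\dot{W}^{1,1}_{x'}L^{2}_{t'}}$, which forces a derivative of $Mu_{L}$ and drags in $\nabla u_{L}$. The paper never takes that derivative: the only input needed from the source for the bound-state ODE is the scalar quantity $\|h\|_{L^{1}_{t'}}=\int_{0}^{\infty}|\langle Mr_{L},m\rangle|\,dt'$, and by Fubini and Cauchy--Schwarz this is $\lesssim\|Mm\|_{L^{1}_{x'}L^{2}_{t'}}\|r_{L}\|_{L^{\infty}_{x'}L^{2}_{t'}}$; the first factor is finite because $m$ is exponentially localized (Theorem \ref{thm:Agmon}) and $M$ decays in $t$, and the second factor is exactly the content of Theorem \ref{thm:endmove}, which rests on Theorem \ref{thm:reversedP} and the energy comparison Theorem \ref{thm:generalC}. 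Once $\|a\|_{L^{2}_{t'}}$ and $\|r^{S}\|_{L^{\infty}_{x}L^{2}_{t}}$ are both bounded, the conclusion for $u^{S}=a\,m_{\mu}+r^{S}$ follows from Agmon decay of $m$, with no time-slicing iteration, no $\dot{W}^{1,1}$ estimate of the source, and no local energy decay on $\nabla u_{L}$.
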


With the endpoint estimate along $\left(x+\vec{Y}(t),t\right)$, one
can derive the boundedness of the total energy. We denote the total
energy of the system as
\begin{equation}
E_{V}(t)=\int\left|\nabla_{x}u\right|^{2}+\left|\partial_{t}u\right|^{2}+V\left(x-\vec{Y}(t)\right)\left|u\right|^{2}dx.
\end{equation}
\begin{cor}[Boundedness of the total energy]
\label{cor:ene}Suppose $u$ is
a scattering state in the sense of Definition \ref{AO} which solves
the equation \eqref{eq:meq}. Assume
\begin{equation}
\left\Vert \nabla V\right\Vert _{L^{1}}<\infty,
\end{equation}
then $E_{V}(t)$ is bounded by the initial energy independently of
$t$,
\begin{equation}
\sup_{t\geq0}\left|E_{V}(t)\right|\lesssim\left\Vert \left(g,f\right)\right\Vert _{\dot{H}^{1}\times L^{2}}^{2}.
\end{equation}
\end{cor}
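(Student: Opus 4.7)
The plan is to derive a differential identity for $E_V(t)$, integrate in time, and then apply the endpoint reversed Strichartz estimate along the trajectory $\vec v(t)$ furnished by Theorem \ref{thm:EndRStri} to close the estimate.

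First I would multiply equation \eqref{eq:meq} by $u_t$ and integrate in $x$. After integration by parts in the Laplacian term, the kinetic and gradient pieces of $\frac{d}{dt}E_V$ combine with the potential piece to produce cancellation of all terms of the form $\int V u u_t\,dx$. The only surviving contribution comes from the explicit time-dependence of $V(x-\vec v(t))$, yielding the pointwise identity
\begin{equation}
\frac{d}{dt}E_V(t)=-\vec v\,'(t)\cdot\int_{\mathbb{R}^{3}}\nabla V\bigl(x-\vec v(t)\bigr)\,|u(x,t)|^{2}\,dx.
\end{equation}
Since $\vec v$ is admissible, $|\vec v\,'(t)|<1$, so integrating in time and changing variables $y=x-\vec v(t)$ gives
\begin{equation}
\bigl|E_V(T)-E_V(0)\bigr|\le\int_{0}^{T}\!\!\int_{\mathbb{R}^{3}}\bigl|\nabla V(y)\bigr|\,\bigl|u(y+\vec v(t),t)\bigr|^{2}\,dy\,dt.
\end{equation}

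Next I would apply Fubini to swap the order of integration, producing
\begin{equation}
\bigl|E_V(T)-E_V(0)\bigr|\le\int_{\mathbb{R}^{3}}\bigl|\nabla V(y)\bigr|\left(\int_{0}^{T}\bigl|u(y+\vec v(t),t)\bigr|^{2}\,dt\right)dy.
\end{equation}
The inner integral is precisely the quantity controlled by the endpoint reversed Strichartz estimate along $\vec v$ in Theorem \ref{thm:EndRStri} (with $\vec h=\vec v$), whose bound is uniform in $y$. Pulling out the supremum in $y$ and using $\|\nabla V\|_{L^{1}}<\infty$ yields
\begin{equation}
\bigl|E_V(T)-E_V(0)\bigr|\lesssim\|\nabla V\|_{L^{1}}\bigl(\|f\|_{L^{2}}+\|g\|_{\dot H^{1}}\bigr)^{2},
\end{equation}
uniformly in $T$.

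Finally I would control the initial value $E_V(0)$ by the initial energy: the free part is already of the right form, and the potential term $\int V|g|^{2}dx$ is bounded by $\|V\|_{L^{3/2}}\|g\|_{L^{6}}^{2}\lesssim\|g\|_{\dot H^{1}}^{2}$ via Sobolev embedding (where $V\in L^{3/2}$ follows from the short-range hypotheses used throughout the paper). Combining the two bounds gives the desired uniform control of $E_V(t)$. The only real content here is the passage from a seemingly quadratic quantity in $u$ to something integrable against $|\nabla V|$, and the main obstacle — being able to exchange the spatial and temporal integrations and then estimate $\int_{0}^{\infty}|u(y+\vec v(t),t)|^{2}dt$ uniformly in $y$ — is exactly what the trajectory-version endpoint reversed Strichartz estimate in Theorem \ref{thm:EndRStri} was designed to handle.
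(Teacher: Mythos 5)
Your proof is correct and follows essentially the same route as the paper: differentiate $E_V(t)$ in time to isolate the term from the moving potential, change variables, apply Fubini, and close with the trajectory-version endpoint reversed Strichartz estimate (the paper invokes Theorem~\ref{thm:revemoving}, which is the same result as Theorem~\ref{thm:EndRStri}). Your extra remark controlling $E_V(0)$ by $\|(g,f)\|_{\dot H^1\times L^2}^2$ via Sobolev embedding fills in a step the paper leaves implicit, but it is not a different approach.
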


\subsection*{Notation}

\textquotedblleft $A:=B\lyxmathsym{\textquotedblright}$ or $\lyxmathsym{\textquotedblleft}B=:A\lyxmathsym{\textquotedblright}$
is the definition of $A$ by means of the expression $B$. We use
the notation $\langle x\rangle=\left(1+|x|^{2}\right)^{\frac{1}{2}}$.
The bracket $\left\langle \cdot,\cdot\right\rangle $ denotes the
distributional pairing and the scalar product in the spaces $L^{2}$,
$L^{2}\times L^{2}$ . For positive quantities $a$ and $b$, we write
$a\lesssim b$ for $a\leq Cb$ where $C$ is some prescribed constant.
Also $a\simeq b$ for $a\lesssim b$ and $b\lesssim a$. Throughout,
we use $\partial_{tt}u:=\frac{\partial^{2}}{\partial t\partial t}$,
$u_{t}:=\frac{\partial}{\partial_{t}}u$, $\Delta:=\sum_{i=1}^{n}\frac{\partial^{2}}{\partial x_{i}\partial x_{i}}$
and occasionally, $\square:=-\partial_{tt}+\Delta$. 

\subsection*{Organization }

The paper is organized as follows: In Section \ref{sec:Prelim}, we
discuss some preliminary results for the free wave equation and the
wave equation with a stationary potential. In Section \ref{sec: Lorentz},
we will analyze the change of the energy under Lorentz transformations.
Agmon's estimates are also presented as a consequence of our comparison
results. In Section \ref{sec:revered}, the endpoint reversed Strichartz
estimates of homogeneous and inhomogeneous forms are derived along
admissible trajectories. In Section \ref{sec:one}, we show Strichartz
estimates, energy estimates, the local energy decay and the boundedness
of the total energy for a scattering state to the wave equation with
a moving potential. Finally, in Section \ref{sec:Scattering}, we
confirm that a scattering state indeed scatters to a solution to the
free wave equation and also obtain a version of the asymptotic completeness
description of the wave equations with one moving potential. In appendices,
for the sake of completeness, we show the dispersive estimates for
wave equations in $\mathbb{R}^{3}$ based on the idea of reversed
Strichartz estimates, the local energy decay of free wave equations
and the global existence of solutions to the wave equation with a
time-dependent potential. A Fourier analytic proof of the endpoint
reversed Strichartz estimates is also presented.

\subsection*{Acknowledgment}

I want to thank Marius Beceanu for many useful discussions.

\section{Preliminaries\label{sec:Prelim}}

\subsection{Strichartz estimates and the endpoint reversed Strichartz estimates}

We start with Strichartz estimates for free wave equations. Strichartz
estimates can be derived abstractly from these dispersive inequalities
and the energy inequality. The following theorem is standard. To be
consistent with our later discussion, we only state the energy level
estimates. One can find full details with all possible estimates and
proofs in, for example, Keel-Tao \cite{KT}.
\begin{thm}[Strichartz estimates]
\label{thm:StrichF}Suppose
\begin{equation}
\partial_{tt}u-\Delta u=F
\end{equation}
with initial data
\begin{equation}
u(x,0)=g(x),\,u_{t}(x,0)=f(x).
\end{equation}
Then for $p,\,a>2$, $\left(p,q\right),\,\left(a,b\right)$ satisfying
\begin{equation}
\frac{1}{2}=\frac{1}{p}+\frac{3}{q}=\frac{1}{a}+\frac{3}{b}
\end{equation}
we have
\begin{equation}
\|u\|_{L_{t}^{p}L_{x}^{q}}\lesssim\|g\|_{\dot{H}^{1}}+\|f\|_{L^{2}}+\left\Vert F\right\Vert _{L_{t}^{a'}L_{x}^{b'}}\label{eq:StrichF}
\end{equation}
where $\frac{1}{a}+\frac{1}{a'}=1,\,\frac{1}{b}+\frac{1}{b'}=1.$
\end{thm}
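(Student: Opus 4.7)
The plan is to follow the abstract $TT^\ast$ approach of Keel--Tao, with the two ingredients already on hand from the earlier part of the paper: the energy identity for the half-wave propagators $e^{\pm it\sqrt{-\Delta}}$ and the dispersive decay estimates \eqref{eq:disper1}--\eqref{eq:disper2}. Writing $u$ via Duhamel's formula separates the problem into a homogeneous part controlled by $\|g\|_{\dot{H}^s}+\|f\|_{\dot{H}^{s-1}}$ and a retarded inhomogeneous part controlled by $\|F\|_{L^{a'}_t L^{b'}_x}$; absorbing the multiplier $(-\Delta)^{(s-1)/2}$ into the data and writing $\cos(t\sqrt{-\Delta})$ and $\sin(t\sqrt{-\Delta})/\sqrt{-\Delta}$ as combinations of half-wave evolutions reduces everything to the basic estimate $\|e^{it\sqrt{-\Delta}} h\|_{L^p_t L^q_x} \lesssim \|h\|_{L^2_x}$ together with its retarded bilinear analogue for admissible $(p,q),(a,b)$.

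For the homogeneous bound I would first Littlewood--Paley localize so that $\widehat{h}$ is supported in a dyadic annulus $|\xi|\sim N$; by the wave scaling $(x,t)\mapsto(x/N,t/N)$ it suffices to treat the case $N=1$. On this frequency-localized piece the dispersive estimate reads $\|e^{it\sqrt{-\Delta}} h\|_{L^\infty_x}\lesssim |t|^{-1}\|h\|_{L^1_x}$, and complex interpolation with the unitary bound on $L^2_x$ yields $\|e^{it\sqrt{-\Delta}} h\|_{L^q_x}\lesssim |t|^{-(1-2/q)}\|h\|_{L^{q'}_x}$ for $2\le q\le\infty$. The $TT^\ast$ identity then reduces the desired space-time estimate to a bilinear form that is controlled by Hardy--Littlewood--Sobolev in the time variable, the time integral converging precisely when $p>2/s$ (the strict inequality being exactly the hypothesis and sidestepping the well-known failure of the $L^2_t L^\infty_x$ endpoint in $\mathbb{R}^3$ when $s=1$). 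Undoing the frequency localization via the Littlewood--Paley square function, valid because $2\le q<\infty$ strictly in the non-endpoint regime, yields the homogeneous Strichartz bound on the full $\dot{H}^s$ scale.

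For the inhomogeneous term, the natural untruncated version $\int_{\mathbb{R}} \frac{\sin((t-s)\sqrt{-\Delta})}{\sqrt{-\Delta}} F(s)\,ds$ is handled by composing the homogeneous estimate with its dual, which produces the mixed-norm bilinear bound $L^{a'}_t L^{b'}_x \to L^p_t L^q_x$. Truncating to $s\le t$ is then accomplished by the Christ--Kiselev lemma, which applies exactly when $a'<p$, i.e.\ when both $p,a>2/s$ strictly, as assumed. The main obstacle one would have to face in a fully detailed write-up is verifying that the Christ--Kiselev hypothesis is met at the level of mixed-norm spaces and tracking the $\dot{H}^s$ norms through the frequency decomposition; both are standard since the endpoint pair $(p,q)=(2,\infty)$ at $s=1$ is excluded by hypothesis, so no endpoint atomic decomposition of Keel--Tao is required and the argument reduces to the textbook application cited as \cite{KT}.
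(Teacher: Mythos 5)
The paper offers no proof of Theorem~\ref{thm:StrichF}; it labels it standard and refers the reader to Keel--Tao \cite{KT}, and your sketch (energy plus frequency-localized dispersive estimate, $TT^\ast$ with Hardy--Littlewood--Sobolev in time, Littlewood--Paley square function, and Christ--Kiselev for the retarded Duhamel term) is precisely the argument from that reference, so it matches the intended proof. One small caveat: your step ``$a'<p$, i.e.\ $p,a>2/s$'' quietly uses $s\le 1$ (from $1/a'>1-s/2\ge s/2>1/p$), which is the regime the paper actually uses, so the argument stands.
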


The endpoint $\left(p,q\right)=\left(2,\infty\right)$ can be recovered
for radial functions in Klainerman-Machedon \cite{KM} for the homogeneous
case and Jia-Liu-Schlag-Xu \cite{JLSX} for the inhomogeneous case.
The endpoint estimate can also be obtained when a small amount of
smoothing (either in the Sobolev sense, or in relaxing the integrability)
is applied to the angular variable, see Machihara-Nakamura-Nakanishi-Ozawa
\cite{MNNO}. 
\begin{thm}[\cite{MNNO}]
\label{thm:inhomAR}  For any $1\leq p<\infty$, suppose
$u$ solves the free wave equation
\begin{equation}
\partial_{tt}u-\Delta u=0
\end{equation}
 with initial data
\begin{equation}
u(x,0)=g(x),\,u_{t}(x,0)=f(x).
\end{equation}
Then
\begin{equation}
\|u\|_{L_{t}^{2}L_{r}^{\infty}L_{\omega}^{p}}\le C(p)\left(\|f\|_{L^{2}}+\|g\|_{\dot{H}^{1}}\right).\label{eq:inhomoAR}
\end{equation}
\end{thm}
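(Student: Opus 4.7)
The plan is to combine the radial endpoint Strichartz estimate (mentioned just above via Klainerman--Machedon) with a spherical harmonic decomposition and an angular Sobolev embedding, the latter being the mechanism that makes finite $p$ decisively easier than the genuine endpoint $p=\infty$.

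First, reduce to the half-wave propagator. Writing $u=\tfrac{1}{2}\bigl(e^{it\sqrt{-\Delta}}h_{+}+e^{-it\sqrt{-\Delta}}h_{-}\bigr)$ with $h_{\pm}=g\mp i(-\Delta)^{-1/2}f\in\dot{H}^{1}$ and using $t$-reversal symmetry, it suffices to prove
\[
\|e^{it\sqrt{-\Delta}}h\|_{L_{t}^{2}L_{r}^{\infty}L_{\omega}^{p}}\lesssim_{p}\|h\|_{\dot{H}^{1}}
\]
for $h\in\dot{H}^{1}(\mathbb{R}^{3})$ and any $p<\infty$. Next, expand in spherical harmonics $h(x)=\sum_{n\ge 0}\sum_{|k|\le n}h_{n,k}(|x|)Y_{k}^{n}(\omega)$. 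Because $\sqrt{-\Delta}$ commutes with rotations, the evolution is diagonal in this basis, $(e^{it\sqrt{-\Delta}}h)(r\omega)=\sum_{n,k}v_{n,k}(r,t)Y_{k}^{n}(\omega)$, and on the $n$-th sector the homogeneous norm carries a centrifugal weight,
\[
\|h\|_{\dot{H}^{1}}^{2}=\sum_{n,k}\int_{0}^{\infty}\Bigl(|h_{n,k}'(r)|^{2}+\frac{n(n+1)}{r^{2}}|h_{n,k}(r)|^{2}\Bigr)r^{2}\,dr.
\]
After the substitution $w_{n,k}=rv_{n,k}$ each $v_{n,k}$ solves a one-dimensional half-wave equation with repulsive inverse-square potential $n(n+1)/r^{2}$; applying the radial endpoint estimate mode by mode yields
\[
\|v_{n,k}\|_{L_{t}^{2}L_{r}^{\infty}}\lesssim (1+n)^{\alpha}\|h_{n,k}Y_{k}^{n}\|_{\dot{H}^{1}}
\]
with only polynomial loss in the angular degree $n$.

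Second, invoke the angular Sobolev embedding $H^{\sigma}(S^{2})\hookrightarrow L^{p}(S^{2})$, valid for every $p<\infty$ with $\sigma>1-2/p$. By Plancherel on $S^{2}$ combined with the elementary inequality $\sup_{r}\sum\le\sum\sup_{r}$,
\[
\|u\|_{L_{t}^{2}L_{r}^{\infty}L_{\omega}^{p}}^{2}\lesssim\sum_{n,k}(1+n)^{2\sigma}\|v_{n,k}\|_{L_{t}^{2}L_{r}^{\infty}}^{2}\lesssim\sum_{n,k}(1+n)^{2\sigma+2\alpha}\|h_{n,k}Y_{k}^{n}\|_{\dot{H}^{1}}^{2},
\]
and the surplus powers of $(1+n)$ are absorbed into the centrifugal term $n(n+1)/r^{2}$ available in the $\dot{H}^{1}$-norm on each sector, closing the estimate.

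The main obstacle is quantitative: one must sharply track the exponent $\alpha$ in the radial endpoint estimate uniformly in the angular mode $n$, and then verify $2\sigma+2\alpha\le 2$ so that the centrifugal weight indeed absorbs the losses. Because $p\to\infty$ forces $\sigma\to 1$ and leaves no margin, in that regime one effectively needs $\alpha=0$ on every sector, which requires care. A technically cleaner alternative, bypassing the explicit tracking of $\alpha$, is to work directly on the Fourier side using the plane-wave expansion $e^{ix\cdot\xi}=4\pi\sum_{n,k}i^{n}j_{n}(|x||\xi|)Y_{k}^{n}(\hat x)\overline{Y_{k}^{n}(\hat\xi)}$ and reducing the whole norm to one-dimensional oscillatory integrals in $(r,t)$ whose kernels are controlled via the known asymptotics of the spherical Bessel functions $j_{n}$.
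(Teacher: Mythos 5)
The paper does not prove this theorem — it simply cites \cite{MNNO} — so the only question is whether your proposed proof is actually complete. The overall strategy (spherical harmonic decomposition, a mode-by-mode endpoint radial estimate, angular Sobolev embedding, and $\sup_{r}\sum\leq\sum\sup_{r}$) is a reasonable skeleton, broadly in the spirit of what Machihara--Nakamura--Nakanishi--Ozawa do. But the step you describe as ``absorbed into the centrifugal term'' hides the entire content of the theorem and is not correct as stated.

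Here is the issue. After the angular Sobolev embedding and $\sup\sum\leq\sum\sup$ you arrive at $\sum_{n,k}(1+n)^{2\sigma}\|v_{n,k}\|_{L_{t}^{2}L_{r}^{\infty}}^{2}$, and you need this to be controlled by $\|h\|_{\dot{H}^{1}}^{2}=\sum_{n,k}\|h_{n,k}Y_{k}^{n}\|_{\dot{H}^{1}}^{2}$. The sectoral $\dot{H}^{1}$-norm already contains the centrifugal term $n(n+1)/r^{2}$; it is not a surplus resource that you can spend twice. Thus what you actually need is a genuine mode-by-mode \emph{gain}
\[
\|v_{n,k}\|_{L_{t}^{2}L_{r}^{\infty}}^{2}\lesssim (1+n)^{-2\sigma}\,\|h_{n,k}Y_{k}^{n}\|_{\dot{H}^{1}}^{2},
\]
for $\sigma$ arbitrarily close to $1$ (since $p<\infty$ but can be large). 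Simply ``applying the radial endpoint estimate mode by mode'' produces no such gain --- Klainerman--Machedon gives the estimate with constant independent of $n$, and you yourself record it with a polynomial \emph{loss} $(1+n)^{\alpha}$. The condition you write, $2\sigma+2\alpha\leq 2$, would only suffice if you had separately proved a Hardy-type sectoral bound of the form $\|v_{n,k}\|_{L_{t}^{2}L_{r}^{\infty}}\lesssim(1+n)^{\alpha}\|h_{n,k}/r\|_{L_{r}^{2}(r^{2}dr)}$ with $\alpha\leq 1-\sigma$, so that the factor $n(n+1)$ in the centrifugal term could then be cashed in. That bound is precisely the nontrivial analytic ingredient the proposal never establishes; it does not follow from the radial Klainerman--Machedon estimate, which controls the $L^{\infty}_{r}$ norm only by the full $\dot{H}^{1}$ data norm. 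Your closing remark that ``one effectively needs $\alpha=0$ on every sector, which requires care'' concedes this, but a proof must supply the care, not defer it. The alternative route via spherical Bessel asymptotics that you sketch at the end is in fact closer to what is needed, because the turning-point structure of $j_{n}$ is what delivers the $n$-dependent improvement; that is the path you would have to develop in detail to make the argument go through.
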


The regular Strichartz estimates fail at the endpoint. But if one
switches the order of space-time integration, it is possible to estimate
the solution using the fact that the solution decays quickly away
from the light cone. Therefore, we introduce reversed Strichartz estimates.
Since only the endpoint reversed Stricharz estimate will be used later
on, we will restrict our to that case. 
\begin{thm}[Endpoint reversed Strichartz estimate]
\label{thm:EndRStrichF}Suppose
\begin{equation}
\partial_{tt}u-\Delta u=F
\end{equation}
with initial data
\begin{equation}
u(x,0)=g(x),\,u_{t}(x,0)=f(x).
\end{equation}
Then
\begin{equation}
\left\Vert u\right\Vert _{L_{x}^{\infty}L_{t}^{2}}\lesssim\|f\|_{L^{2}}+\|g\|_{\dot{H}^{1}}+\left\Vert F\right\Vert _{L_{x}^{\frac{3}{2},1}L_{t}^{2}}.\label{eq:EndRStrichF}
\end{equation}
and
\begin{equation}
\left\Vert u\right\Vert _{L_{x}^{6,2}L_{t}^{\infty}}\lesssim\|f\|_{L^{2}}+\|g\|_{\dot{H}^{1}}+\left\Vert F\right\Vert _{L_{x}^{\frac{6}{5},2}L_{t}^{\infty}.}
\end{equation}
\end{thm}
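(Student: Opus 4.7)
My plan is to treat the homogeneous and inhomogeneous contributions separately, in both cases working directly with the three-dimensional Kirchhoff representation. The geometric point is that along the vertical line $\{(x,t):t\in\mathbb{R}\}$ the backward light cone from a source point $(y,s)$ is crossed exactly once, which after a change of variables converts the $L_{t}^{2}$ norm into an $L^{2}$ integral over $\mathbb{R}^{3}$.

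For the homogeneous sine piece $u_{s}(x,t)=\frac{t}{4\pi}\int_{S^{2}}f(x+t\omega)\,d\omega$, Cauchy--Schwarz on $S^{2}$ gives $|u_{s}(x,t)|^{2}\lesssim t^{2}\int_{S^{2}}|f(x+t\omega)|^{2}\,d\omega$, and integrating in $t\in(0,\infty)$ and changing to Cartesian coordinates $y=x+t\omega$ produces exactly $\|f\|_{L^{2}}^{2}$. For the cosine piece $u_{c}(x,t)=\cos(t\sqrt{-\Delta})g$ I would use the identity $u_{c}=\partial_{t}\bigl(\frac{t}{4\pi}\int_{S^{2}}g(x+t\omega)\,d\omega\bigr)$; the $\partial_{t}$ produces two terms, a pure surface mean of $g$, whose $L_{t}^{2}$ norm transforms into $\int|g(y)|^{2}|y-x|^{-2}\,dy$ and is controlled by $\|g\|_{\dot{H}^{1}}^{2}$ via Hardy's inequality, and a $t$-weighted surface mean of $\nabla g$ handled exactly like the sine case. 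Oddness (resp.~evenness) in $t$ then upgrades the $L_{t}^{2}((0,\infty))$ bounds to $L_{t}^{2}(\mathbb{R})$.

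For the inhomogeneous piece, Duhamel combined with Kirchhoff produces the retarded potential representation
\[
U(x,t):=\int_{0}^{t}\frac{\sin((t-s)\sqrt{-\Delta})}{\sqrt{-\Delta}}F(\cdot,s)\,ds=\frac{1}{4\pi}\int_{|y-x|\leq t}\frac{F(y,\,t-|y-x|)}{|y-x|}\,dy.
\]
I would take the $L_{t}^{2}$ norm and pull it inside the $y$-integral by Minkowski's inequality; since the time shift $t\mapsto t-|y-x|$ is an $L_{t}^{2}$ isometry, the problem reduces to the convolution bound
\[
\|U(x,\cdot)\|_{L_{t}^{2}}\leq\frac{1}{4\pi}\bigl(|\cdot|^{-1}*G\bigr)(x),\qquad G(y):=\|F(y,\cdot)\|_{L_{t}^{2}}.
\]

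The main, and essentially only, delicate step is the last inequality: the naive Young inequality $L^{3,\infty}*L^{3/2}\to L^{\infty}$ fails, so one must invoke O'Neil's refinement in Lorentz spaces. Because $|x|^{-1}\in L^{3,\infty}(\mathbb{R}^{3})$, one obtains $\||\cdot|^{-1}*G\|_{L^{\infty}}\lesssim\|G\|_{L^{3/2,1}}=\|F\|_{L_{x}^{3/2,1}L_{t}^{2}}$ uniformly in $x$. This is precisely the reason the Lorentz norm $L_{x}^{3/2,1}$, rather than $L_{x}^{3/2}$, appears on the right-hand side of \eqref{eq:EndRStrichF}, and it is what makes the endpoint inhomogeneous estimate go through.
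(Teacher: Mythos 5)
Your proposal is correct and follows essentially the same route as the paper's proof in Section~\ref{sec:revered}: Kirchhoff plus Cauchy--Schwarz on the sphere and the change of variables $y=x+t\omega$ for the sine piece (with the cosine piece handled analogously via Hardy's inequality), and the retarded-potential representation, Minkowski's inequality, the $L_t^2$ time-shift isometry, and the O'Neil/Lorentz--H\"older bound $\|\,|\cdot|^{-1}\ast G\|_{L^\infty}\lesssim\|G\|_{L^{3/2,1}}$ for the Duhamel term. Your write-up is slightly more explicit about the cosine term and about why the Lorentz norm $L_x^{3/2,1}$ is necessary, but there is no substantive difference from the paper's argument.
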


See Section \ref{sec:revered} for the detailed proof. For \eqref{eq:EndRStrichF},
one can find an alternative proof based on the Fourier transform in
Appendix D.

The results above from Theorem \ref{thm:StrichF} and Theorem \ref{thm:EndRStrichF}
can be generalized to the wave equation with a real stationary potential. 

For the perturbed Hamiltonian,
\begin{equation}
H=-\Delta+V
\end{equation}
satisfy Definition \ref{def:potenialWO}, we consider the wave equation
with potential in $\mathbb{R}^{3}$:

\begin{equation}
\partial_{tt}u-\Delta u+Vu=0
\end{equation}
with initial data
\begin{equation}
u(x,0)=g(x),\,u_{t}(x,0)=f(x).
\end{equation}
One can write down the solution to it explicitly:
\begin{equation}
u=\frac{\sin\left(t\sqrt{H}\right)}{\sqrt{H}}f+\cos\left(t\sqrt{H}\right)g.
\end{equation}
Let $P_{b}$ be the projection onto the point spectrum of $H$, $P_{c}=I-P_{b}$
be the projection onto the continuous spectrum of $H$. 
\begin{rem}
We do not try to get the most optimal regularity and decay conditions
on the potential. For the optimal cases, one can check the conditions
in \cite{BecGo}
\end{rem}

With the setting above, we formulate the results from Beceanu-Goldberg
\cite{BecGo}.
\begin{thm}[Strichartz and reversed Strichartz estimates]
\label{thm:PStriRStrich}Consider
the perturbed Hamiltonian $H=-\Delta+V$ in $\mathbb{R}^{3}$ as above.
Then for all $p>2$, and $\left(p,q\right)$ satisfying
\begin{equation}
\frac{1}{2}=\frac{1}{p}+\frac{3}{q}
\end{equation}
we have
\begin{equation}
\left\Vert \frac{\sin\left(t\sqrt{H}\right)}{\sqrt{H}}P_{c}f+\cos\left(t\sqrt{H}\right)P_{c}g\right\Vert _{L_{t}^{p}L_{x}^{q}}\lesssim\|g\|_{\dot{H}^{1}}+\|f\|_{L^{2}}.\label{PSrich}
\end{equation}
For the endpoint of reversed Strichartz estimates, we have
\begin{equation}
\left\Vert \frac{\sin\left(t\sqrt{H}\right)}{\sqrt{H}}P_{c}f+\cos\left(t\sqrt{H}\right)P_{c}g\right\Vert _{L_{x}^{\infty}L_{t}^{2}}\lesssim\|f\|_{L^{2}}+\|g\|_{\dot{H}^{1}},\label{eq:PEndRSch}
\end{equation}
\begin{equation}
\left\Vert \int_{0}^{t}\frac{\sin\left((t-s)\sqrt{H}\right)}{\sqrt{H}}P_{c}F(s)\,ds\right\Vert _{L_{x}^{\infty}L_{t}^{2}}\lesssim\left\Vert F\right\Vert _{L_{x}^{\frac{3}{2},1}L_{t}^{2}}.\label{eq:PEndRSIn}
\end{equation}
One also has
\begin{equation}
\left\Vert \frac{\sin\left(t\sqrt{H}\right)}{\sqrt{H}}P_{c}f+\cos\left(t\sqrt{H}\right)P_{c}g\right\Vert _{L_{x}^{6,2}L_{t}^{\infty}}\lesssim\|f\|_{L^{2}}+\|g\|_{\dot{H}^{1}},
\end{equation}
\begin{equation}
\left\Vert \int_{0}^{t}\frac{\sin\left((t-s)\sqrt{H}\right)}{\sqrt{H}}P_{c}F(s)\,ds\right\Vert _{L_{x}^{6,2}L_{t}^{2}}\lesssim\left\Vert F\right\Vert _{L_{x}^{\frac{6}{5},2}L_{t}^{\infty}}.
\end{equation}
\end{thm}

One can find detailed arguments and more estimates in Beceanu-Goldberg
\cite{BecGo}. We will apply the structure of the wave operators to
show the theorem above in Section \ref{sec:revered}.

\subsection{Structure of wave operators and its applications}

Next, we discuss the structure of wave operators. Again consider
\begin{equation}
H=-\Delta+V.
\end{equation}
For wave operators, we define
\begin{equation}
W^{+}=s-\lim_{t\rightarrow\infty}e^{itH}e^{it\Delta}.
\end{equation}
We know
\begin{equation}
W^{+}\left(-\Delta\right)=HW^{+}
\end{equation}
and
\begin{equation}
\left(W^{+}\right)^{*}=s-\lim_{t\rightarrow\infty}e^{itH_{0}}e^{-itH}P_{c}.
\end{equation}
By Beceanu and Schlag \cite{BeSch}, we have the following structure
formula for $W^{+}$ and $\left(W^{+}\right)^{*}$. 
\begin{thm}[\cite{BeSch}]
\label{thm:structure}Assume
\[
H=-\Delta+V
\]
satisfies Definition \ref{def:potenialWO}. Then for both $W^{+}$
and $\left(W^{+}\right)^{*},$ we have for $f\in L^{2}$,
\begin{equation}
Wf(x)=f(x)+\int_{\mathbb{S}^{2}}\int_{\mathbb{R}^{3}}g(x,y,\omega)f\left(S_{\omega}x+y\right)\,dyd\omega,\label{eq:lwaveop}
\end{equation}
for some $g(x,y,\omega)$ such that
\begin{equation}
\int_{\mathbb{S}^{2}}\int_{\mathbb{R}^{3}}\left\Vert g(x,y,\omega)\right\Vert _{L_{x}^{\infty}}dyd\omega<\infty
\end{equation}
and where
\begin{equation}
S_{\omega}x=x-(2x\cdot\omega)\omega.
\end{equation}
is the reflection by the plane orthogonal to $\omega$. Here $W$
is either of $W^{+}$ or $\left(W^{+}\right)^{*}$.
\end{thm}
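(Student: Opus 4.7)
The plan is to derive the formula from the stationary representation of the wave operator together with the explicit structure of the free resolvent kernel in $\mathbb{R}^{3}$. The natural starting point is the Birman--Kato identity
\begin{equation}
(W^{+} - I) f = -\frac{1}{\pi}\int_{0}^{\infty} R_{V}^{+}(\lambda^{2})\, V\, \mathrm{Im}\, R_{0}^{+}(\lambda^{2})\, f\,\lambda\, d\lambda
\end{equation}
on the continuous subspace, combined with the three-dimensional identity
\begin{equation}
\mathrm{Im}\, R_{0}^{+}(\lambda^{2})(x,y) = \frac{\sin(\lambda|x-y|)}{4\pi|x-y|} = \frac{\lambda}{16\pi^{2}}\int_{\mathbb{S}^{2}} e^{i\lambda(x-y)\cdot\omega}\, d\omega,
\end{equation}
which already produces the integration over the sphere of directions $\omega\in\mathbb{S}^{2}$ appearing in \eqref{eq:lwaveop}. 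An analogous computation, using the adjoint formula, would then handle $(W^{+})^{*}$.

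The second step is to expand the perturbed resolvent as a Born series $R_{V}^{+}(\lambda^{2}) = \sum_{n\ge 0} (-R_{0}^{+}(\lambda^{2}) V)^{n} R_{0}^{+}(\lambda^{2})$ and to evaluate each term explicitly using $R_{0}^{+}(\lambda^{2})(x,y) = e^{i\lambda|x-y|}/(4\pi|x-y|)$. For the $n$-th term the $\lambda$-integration reduces to an elementary oscillatory integral of the form $\int_{0}^{\infty} \lambda^{k_{n}} e^{i\lambda\Phi_{n}(x,y_{1},\ldots,y_{n},\omega)}\,d\lambda$ whose phase is piecewise linear in $\lambda$; collapsing this integral produces a distribution supported where the signed path length vanishes, and a change of variables identifies the surviving argument of $f$ as the reflected, shifted point $S_{\omega} x + y$. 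Integrating out the intermediate variables $y_{1},\ldots,y_{n}$ packages the $n$-th Born term into a single kernel $g_{n}(x,y,\omega)$, so that each contributes a piece of the form \eqref{eq:lwaveop}.

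The main obstacle is the summation of the series in the norm
\begin{equation}
\|g\|_{Y} := \int_{\mathbb{S}^{2}}\int_{\mathbb{R}^{3}} \|g(\cdot,y,\omega)\|_{L_{x}^{\infty}}\, dy\, d\omega,
\end{equation}
for which I would identify a Wiener-type algebra of kernels of the shape \eqref{eq:lwaveop} that is stable under composition with $V R_{0}^{\pm}$, and bound each Born term using the decay $|V(x)|\lesssim\langle x\rangle^{-\alpha}$, $\alpha>3$. High frequencies are controlled by the gain $\lambda^{-1}$ from each free resolvent acting on $V$; the tail of the series in the low-frequency regime is absorbed by inverting $I + R_{0}^{+}(0) V$ on a suitable weighted $L^{2}$ space, which is available precisely by the no-eigenvalue and no-resonance assumption at zero. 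It is in this Wiener-algebra bookkeeping and in the low-energy inversion that the delicate work of Beceanu \cite{Bec1,Bec2} is genuinely required, and I would follow that strategy rather than attempt a shortcut.
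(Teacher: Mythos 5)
The paper does not prove Theorem~\ref{thm:structure}: it is imported verbatim from Beceanu \cite{Bec1,Bec2} as an external result, and no argument appears in the text. Your proposal is therefore not being compared against a proof in this paper, but rather against the proof in the cited reference. That said, your outline does capture the correct high-level architecture of Beceanu's argument: a stationary (Birman--Kato) representation of $W^{+}-I$, the spherical Fourier representation $\mathrm{Im}\,R_{0}^{+}(\lambda^{2})(x,y)=\frac{\lambda}{16\pi^{2}}\int_{\mathbb{S}^{2}}e^{i\lambda(x-y)\cdot\omega}\,d\omega$ which introduces the $\omega$-integration, a Born expansion of $R_{V}$, and a Wiener-algebra framework in the norm $\int_{\mathbb{S}^{2}}\int_{\mathbb{R}^{3}}\|g(\cdot,y,\omega)\|_{L_{x}^{\infty}}\,dy\,d\omega$ used to sum the series, with the absence of a zero eigenvalue or resonance securing invertibility at low energy. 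You are also right that this last step is precisely where the technical content of \cite{Bec1} lies and cannot be elided.

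Two caveats. First, the middle step, where you assert that the $\lambda$-integration against a ``piecewise linear'' phase collapses onto a surface and that the surviving argument of $f$ is literally $S_{\omega}x+y$, is the genuinely nontrivial geometric computation in Beceanu's proof, and your description of it is impressionistic rather than a proof: the phase for the $n$-th Born term is not simply linear in $\lambda$ but involves sums of distances $|x-y_{1}|+\dots+|y_{n}-z|$ together with $z\cdot\omega$, and extracting the reflection structure requires a careful change of variables that you do not exhibit. Second, the closure of the Wiener-type algebra of operators of the form $f\mapsto\int g(x,y,\omega)f(S_{\omega}x+y)\,dy\,d\omega$ under composition — which is needed to make sense of summing the Born series inside that class — is itself a lemma requiring proof (composing two reflections gives a rotation, not a reflection, so one must show that the algebra generated is still parametrizable in the stated form); your proposal gestures at ``identifying a Wiener-type algebra'' without addressing this. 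These are gaps relative to a complete proof, but as an honest reconstruction of the strategy of a cited theorem they are acceptable, and you correctly flag the dependence on \cite{Bec1,Bec2} for the detailed work.
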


The structure formula \eqref{eq:lwaveop} in Theorem \ref{thm:structure}
is useful to obtain estimates for the perturbed operators. One can
easily pass many estimates from the free case to the perturbed case
provided the solution operators of the free problem commute with certain
symmetries. Here we illustrate this idea by a concrete computation
based on Theorem \ref{thm:inhomAR}. 
\begin{thm}
\label{thm:inhomARH} Assume $H=-\Delta+V$ satisfies Definition \ref{def:potenialWO}.
Setting
\begin{equation}
u^{H}=\frac{\sin\left(t\sqrt{H}\right)}{\sqrt{H}}P_{c}f+\cos\left(t\sqrt{H}\right)P_{c}g,
\end{equation}
then for any $1\leq p<\infty$, one has
\begin{equation}
\left\Vert u^{H}\right\Vert {}_{L_{t}^{2}L_{r}^{\infty}L_{\omega}^{p}}\le C(p)\left(\|f\|_{L^{2}}+\|g\|_{\dot{H}^{1}}\right)\label{eq:inhomoARH}
\end{equation}
\end{thm}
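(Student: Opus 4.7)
The plan is to transfer the free estimate of Theorem \ref{thm:inhomAR} to the perturbed setting by combining the intertwining identity $f(H)P_c = W^+ f(H_0)(W^+)^*$ with the structure formula \eqref{eq:lwaveop}. Concretely, set
\begin{equation}
u^0(x,t) := \frac{\sin(t\sqrt{-\Delta})}{\sqrt{-\Delta}} (W^+)^* f + \cos(t\sqrt{-\Delta})(W^+)^* g,
\end{equation}
so that $u^H(\cdot,t) = W^+ u^0(\cdot,t)$. Since $(W^+)^*$ is bounded on $L^2$ and on $\dot H^1$ under our hypotheses, we have $\|(W^+)^* f\|_{L^2} + \|(W^+)^* g\|_{\dot H^1} \lesssim \|f\|_{L^2} + \|g\|_{\dot H^1}$.

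Next, apply the structure formula to write
\begin{equation}
u^H(x,t) = u^0(x,t) + \int_{\mathbb{S}^2}\int_{\mathbb{R}^3} g(x,y,\omega)\, u^0(S_\omega x + y, t)\, dy\, d\omega.
\end{equation}
The first term is handled directly by Theorem \ref{thm:inhomAR} applied to $u^0$. For the second, the key observation is that for each fixed $(y,\omega)$, the function $v_{y,\omega}(x,t) := u^0(S_\omega x + y, t)$ is again a solution of the free wave equation, because both translation by $y$ and the reflection $S_\omega$ commute with $-\Delta$; its initial data are $(W^+)^* g(S_\omega \cdot + y)$ and $(W^+)^* f(S_\omega \cdot + y)$, whose $\dot H^1$ and $L^2$ norms coincide with those of $(W^+)^* g$ and $(W^+)^* f$ respectively by isometry invariance.

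Then I would apply Minkowski's inequality to move $L^2_t L^\infty_r L^p_\omega$ inside the $(y,\omega)$-integral, pull the $L^\infty_x$ bound of the kernel $g(\cdot,y,\omega)$ out, and apply Theorem \ref{thm:inhomAR} to $v_{y,\omega}$:
\begin{equation}
\|v_{y,\omega}\|_{L_t^2 L_r^\infty L_\omega^p} \le C(p)\bigl(\|(W^+)^* f\|_{L^2} + \|(W^+)^* g\|_{\dot H^1}\bigr) \lesssim \|f\|_{L^2}+\|g\|_{\dot H^1}.
\end{equation}
Integrating in $(y,\omega)$ and using the finiteness of $\int_{\mathbb{S}^2}\int_{\mathbb{R}^3} \|g(\cdot,y,\omega)\|_{L_x^\infty}\,dy\,d\omega$ guaranteed by Theorem \ref{thm:structure} yields the desired bound.

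The step requiring the most care is the Minkowski / kernel-extraction step: one must verify that $\|g(\cdot,y,\omega)\, v_{y,\omega}\|_{L_t^2 L_r^\infty L_\omega^p} \le \|g(\cdot,y,\omega)\|_{L^\infty_x}\|v_{y,\omega}\|_{L_t^2 L_r^\infty L_\omega^p}$, which is routine since $L_r^\infty L_\omega^p$ is a Banach lattice norm and the kernel factor depends only on $x$. The conceptual crux — and the reason the argument goes through at the endpoint — is that Theorem \ref{thm:inhomAR} is insensitive to translation and reflection of the initial data, so it can be applied to each translated-reflected copy uniformly in $(y,\omega)$.
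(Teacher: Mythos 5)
Your proposal is correct and follows the paper's own proof essentially verbatim: both use the intertwining $f(H)P_c = W^+ f(H_0)(W^+)^*$ together with Beceanu's structure formula, then apply Minkowski's inequality and Theorem \ref{thm:inhomAR} to each reflected-translated copy of the free evolution, using that reflections and translations commute with the free propagator and preserve $L^2$ and $\dot H^1$ norms. The only cosmetic difference is that you treat the sine and cosine terms simultaneously by invoking $\dot H^1$-boundedness of $(W^+)^*$ up front, whereas the paper says ``it suffices to consider'' the sine term — but that reduction implicitly relies on the same fact, so the two write-ups are at the same level of rigor.
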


\begin{proof}
It suffices to consider
\begin{equation}
u^{H}=\frac{\sin\left(t\sqrt{H}\right)}{\sqrt{H}}P_{c}f.
\end{equation}
By construction,
\begin{equation}
\frac{\sin\left(t\sqrt{H}\right)}{\sqrt{H}}P_{c}=W^{+}\frac{\sin\left(t\sqrt{-\Delta}\right)}{\sqrt{-\Delta}}\left(W^{+}\right)^{*}.
\end{equation}
Hence
\begin{equation}
\frac{\sin\left(t\sqrt{H}\right)}{\sqrt{H}}P_{c}f=W^{+}\frac{\sin\left(t\sqrt{-\Delta}\right)}{\sqrt{-\Delta}}\left(W^{+}\right)^{*}P_{c}f.
\end{equation}
Denoting
\begin{equation}
h=\left(W^{+}\right)^{*}P_{c}f,
\end{equation}
we have
\begin{equation}
\|P_{c}f\|_{L^{2}}\simeq\|h\|_{L^{2}}.
\end{equation}
Setting
\begin{equation}
G=\frac{\sin\left(t\sqrt{-\Delta}\right)}{\sqrt{-\Delta}}h,
\end{equation}
by Theorem \ref{thm:structure}, it is sufficient to consider the
boundedness of
\begin{equation}
G+\int_{\mathbb{S}^{2}}\int_{\mathbb{R}^{3}}g(x,y,\tau)G\left(S_{\tau}x+y\right)\,dyd\tau.
\end{equation}
Clearly, by Theorem \ref{thm:inhomAR},
\begin{equation}
\|G\|_{L_{t}^{2}L_{r}^{\infty}L_{\omega}^{p}}\lesssim\|h\|_{L^{2}}\simeq\|P_{c}f\|_{L^{2}}.
\end{equation}
Next, by Minkowski's inequality,
\begin{eqnarray}
\left\Vert \int_{\mathbb{S}^{2}}\int_{\mathbb{R}^{3}}g(x,y,\tau)G\left(S_{\tau}x+y\right)\,dyd\tau\right\Vert _{L_{t}^{2}L_{r}^{\infty}L_{\omega}^{p}}\\
\lesssim\int_{\mathbb{S}^{2}}\int_{\mathbb{R}^{3}}\left\Vert g(x,y,\tau)G\left(S_{\tau}x+y\right)\right\Vert {}_{L_{t}^{2}L_{r}^{\infty}L_{\omega}^{p}}dyd\tau\nonumber 
\end{eqnarray}
and
\begin{equation}
\left\Vert g(x,y,\tau)G\left(S_{\tau}x+y\right)\right\Vert {}_{L_{t}^{2}L_{r}^{\infty}L_{\omega}^{p}}\lesssim\left\Vert g(x,y,\tau)\right\Vert _{L_{x}^{\infty}}\left\Vert G\left(S_{\tau}x+y\right)\right\Vert {}_{L_{t}^{2}L_{r}^{\infty}L_{\omega}^{p}}.
\end{equation}
Since reflections with respect to a fixed plane and translations commute
with the solution of a free wave equation, we obtain
\begin{equation}
G\left(S_{\tau}x+y\right)=\frac{\sin\left(t\sqrt{-\Delta}\right)}{\sqrt{-\Delta}}h\left(S_{\tau}x+y\right).
\end{equation}
Therefore,
\begin{equation}
\left\Vert G\left(S_{\tau}x+y\right)\right\Vert {}_{L_{t}^{2}L_{r}^{\infty}L_{\omega}^{p}}\lesssim\|h\left(S_{\tau}x+y\right)\|_{L^{2}}\lesssim\|h\|_{L^{2}}\simeq\|P_{c}f\|_{L^{2}}.
\end{equation}
It follows
\begin{eqnarray}
\left\Vert G+\int_{\mathbb{S}^{2}}\int_{\mathbb{R}^{3}}g(x,y,\tau)G\left(S_{\tau}x+y\right)\,dyd\tau\right\Vert {}_{L_{t}^{2}L_{r}^{\infty}L_{\omega}^{p}}\\
\lesssim\left(1+\int_{\mathbb{S}^{2}}\int_{\mathbb{R}^{3}}\left\Vert g(x,y,\tau)\right\Vert _{L_{x}^{\infty}}dyd\tau\right)\|P_{c}f\|_{L^{2}}\lesssim\|f\|_{L^{2}}.\nonumber 
\end{eqnarray}
Then we conclude
\begin{equation}
\left\Vert u^{H}\right\Vert {}_{L_{t}^{2}L_{r}^{\infty}L_{\omega}^{p}}\le C(p,V)\left(\|f\|_{L^{2}}+\|g\|_{\dot{H}^{1}}\right),
\end{equation}
as claimed.
\end{proof}
One can do similar arguments to obtain many other estimates for the
perturbed wave equations, for example the local energy decay estimate,
the energy estimate and many weighted estimates.

The following Christ-Kiselev Lemma is important in our derivation
of Strichartz estimates.
\begin{lem}[Christ-Kiselev]
\label{lem:Christ-Kiselev}Let $X$, $Y$ be two
Banach spaces and let $T$ be a bounded linear operator from $L^{\beta}\left(\mathbb{R}^{+};X\right)$
to $L^{\gamma}\left(\mathbb{R}^{+};Y\right)$, such that
\begin{equation}
Tf(t)=\int_{0}^{\infty}K(t,s)f(s)\,ds.
\end{equation}
Then the operator
\begin{equation}
\widetilde{T}f=\int_{0}^{t}K(t,s)f(s)\,ds
\end{equation}
 is bounded from $L^{\beta}\left(\mathbb{R}^{+};X\right)$ to $L^{\gamma}\left(\mathbb{R}^{+};Y\right)$
provided $\beta<\gamma$, and
\begin{equation}
\left\Vert \widetilde{T}\right\Vert \leq C(\beta,\gamma)\left\Vert T\right\Vert 
\end{equation}
with
\begin{equation}
C(\beta,\gamma)=\left(1-2^{\frac{1}{\gamma}-\frac{1}{\beta}}\right)^{-1}.
\end{equation}
\end{lem}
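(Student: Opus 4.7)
The plan is to prove the Christ--Kiselev lemma by the standard dyadic decomposition argument, reducing the causal operator $\widetilde{T}$ to the full operator $T$ on off-diagonal pieces. By homogeneity I may assume $\|f\|_{L^\beta(\mathbb{R}^+;X)}=1$. First I construct a partition of $\mathbb{R}^+$ adapted to $f$: set $F(t)=\int_0^t \|f(s)\|_X^\beta\,ds$, so $F:\mathbb{R}^+\to[0,1]$ is nondecreasing, and define $I_{n,j}=F^{-1}([j\,2^{-n},(j+1)2^{-n}])$ for $n\ge 1$ and $0\le j<2^n$. By construction $\int_{I_{n,j}}\|f\|_X^\beta\,ds=2^{-n}$, so $\|f\chi_{I_{n,j}}\|_{L^\beta(X)}=2^{-n/\beta}$. (A standard approximation handles intervals on which $f$ vanishes, so I may assume $F$ is strictly increasing.)

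Next, I carry out the key combinatorial step: the region $\{(s,t):0<s<t\}$ decomposes, up to a null set, as
\begin{equation}
\{s<t\}=\bigsqcup_{n\ge 1}\bigsqcup_{k=0}^{2^{n-1}-1} I_{n,2k}\times I_{n,2k+1},
\end{equation}
because any pair $(s,t)$ with $s<t$ has a uniquely determined finest common dyadic parent under $F$, with $s$ in its left half and $t$ in its right half. Consequently
\begin{equation}
\widetilde{T}f(t)=\sum_{n\ge 1}S_n(t),\qquad S_n(t):=\sum_{k=0}^{2^{n-1}-1}\chi_{I_{n,2k+1}}(t)\,T\!\bigl(f\chi_{I_{n,2k}}\bigr)(t),
\end{equation}
and the essential point is that the inner integrals are of the full operator $T$ on the piece $f\chi_{I_{n,2k}}$, not of the causal operator.

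Now I estimate $S_n$ in $L^\gamma(Y)$. Since the sets $I_{n,2k+1}$ are disjoint in $t$ and $T$ is bounded with norm $\|T\|$,
\begin{equation}
\|S_n\|_{L^\gamma(Y)}^\gamma\le\sum_{k=0}^{2^{n-1}-1}\|T(f\chi_{I_{n,2k}})\|_{L^\gamma(Y)}^\gamma\le 2^{n-1}\,\|T\|^\gamma\,2^{-n\gamma/\beta},
\end{equation}
hence $\|S_n\|_{L^\gamma(Y)}\lesssim \|T\|\,2^{n(1/\gamma-1/\beta)}$. Because $\beta<\gamma$ we have $1/\gamma-1/\beta<0$, so this is a convergent geometric series in $n$. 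Summing with the triangle inequality yields
\begin{equation}
\|\widetilde{T}f\|_{L^\gamma(Y)}\le\sum_{n\ge 1}\|S_n\|_{L^\gamma(Y)}\le \bigl(1-2^{1/\gamma-1/\beta}\bigr)^{-1}\|T\|,
\end{equation}
which is exactly the claimed constant $C(\beta,\gamma)$.

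The one nontrivial step is the dyadic tiling of the triangle $\{s<t\}$ through the $f$-adapted intervals $I_{n,j}$; everything else is bookkeeping. The only subtlety is regularity of $F$: if $\|f\|_X$ vanishes on a set of positive measure then $F$ is not strictly increasing and the pull-back of the dyadic partition is ambiguous. I would handle this by first proving the bound for $f$ such that $F$ is strictly increasing (a dense class) and then passing to the limit using the a priori bound, or equivalently by replacing $F$ with $F(t)+\varepsilon t$ and letting $\varepsilon\to 0$.
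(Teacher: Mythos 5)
The paper states this lemma without proof, citing it as a known result (it is the Christ--Kiselev lemma, standard in the dispersive PDE literature). Your argument is the canonical proof: the Whitney-type dyadic tiling of the triangle $\{s<t\}$ via the $f$-adapted measure $F$, reduction to the non-causal operator $T$ on each sibling pair $I_{n,2k}\times I_{n,2k+1}$, and geometric summation using $1/\gamma<1/\beta$; the bookkeeping is correct and in fact yields the slightly sharper constant $2^{-1/\beta}\bigl(1-2^{1/\gamma-1/\beta}\bigr)^{-1}$, so the stated $C(\beta,\gamma)$ is met, and your remark about regularizing $F$ is the right way to handle the degenerate case.
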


\section{Lorentz Transformations and Energy\label{sec: Lorentz}}

When we consider wave equations with moving potentials, Lorentz transformations
will be important for us to reduce some estimates to stationary cases.
In order to approach our problem from the viewpoint of Lorentz transformations
as in \cite{GC2}, the first natural step is to understand the change
of energy under Lorentz transformations. 

Indeed, in \cite{GC2}, we shown that under Lorentz transformations,
the energy stays comparable to that of the initial data. The method
in \cite{GC2} is based on integration by parts. Here we present an
alternative approach based on the local energy conservation which
is more natural and requires less decay of the potential. We notice
that the method in \cite{GC2} can be viewed as the differential version
of the argument here.

Throughout this section, we perform a Lorentz transformation with
respect to a moving frame with speed $\left|v\right|<1$, say, along
the $x_{1}$ direction, i.e., the velocity is
\begin{equation}
\vec{v}=\left(v,0,0\right).\label{eq:l1}
\end{equation}
Recall that after applying the Lorentz transformation, for function
$u$, under the new coordinates, we denote
\begin{equation}
u_{L}\left(x_{1}',x_{2}',x_{3}',t'\right)=u\left(\gamma\left(x_{1}'+vt'\right),x_{2}',x_{3}',\gamma\left(t'+vx_{1}'\right)\right).\label{eq:l6}
\end{equation}
Now let $u$ be a solution to some wave equation and set $t'=0$.
We notice that in order to show under Lorentz transformations, the
energy stays comparable to that of the initial data, up to an absolute
constant it suffices to prove
\begin{eqnarray}
\int\left|\nabla_{x}u\left(x_{1},x_{2},x_{3},vx_{1}\right)\right|^{2}+\left|\partial_{t}u\left(x_{1},x_{2},x_{3},vx_{1}\right)\right|^{2}dx\nonumber \\
\simeq\int\left|\nabla_{x}u\left(x_{1},x_{2},x_{3},0\right)\right|^{2}+\left|\partial_{t}u\left(x_{1},x_{2},x_{3},0\right)\right|^{2}dx
\end{eqnarray}
where the implicit constant depends on $\left|\vec{v}\right|$.

Throughout this section, we will assume all functions are smooth and
decay fast. We will obtain estimates independent of the additional
smoothness assumption. It is easy to pass the estimates to general
cases with a density argument. 
\begin{rem}
\label{rem:dim}One can observe that all discussions in this section
hold for $\mathbb{R}^{n}$ for $n\geq3$. 
\end{rem}

\subsection{Energy comparison}

In this section, a more general situation is analyzed. We consider
wave equations with time-dependent potentials
\begin{equation}
\partial_{tt}u-\Delta u+V(x,t)u=0
\end{equation}
with
\begin{equation}
\left|V(x,\mu x_{1})\right|\lesssim\frac{1}{\left\langle x\right\rangle ^{2}}
\end{equation}
 uniformly for $0\leq\left|\mu\right|<\left|v\right|<1$. These in
particular apply to wave equations with moving potentials with speed
strictly less than $1$. For example, if the potential is of the from
\begin{equation}
V(x,t)=V\left(x-\vec{\nu}t\right)
\end{equation}
 with
\begin{equation}
\left|V(x)\right|\lesssim\frac{1}{\left\langle x\right\rangle ^{2}}
\end{equation}
then it is transparent that
\begin{equation}
\left|V(x,\mu x_{1})\right|=\left|V(x-\nu\mu x_{1})\right|\lesssim\frac{1}{\left\langle x\right\rangle ^{2}}.
\end{equation}
\begin{rem}
The condition here is to ensure Hardy's inequality can be applied
with weight $\left|V\right|$. Again here, we do not try to get the
most optimal condition.
\end{rem}

\subsubsection{Homogeneous comparison}

Suppose
\begin{equation}
\partial_{tt}u-\Delta u+V(x,t)u=0,
\end{equation}
then it is clear that
\begin{eqnarray}
0 & = & u_{t}\left(\square u-V(t)u\right)\nonumber \\
 & = & -\partial_{t}\left(\frac{\left|u_{t}\right|^{2}}{2}+\frac{\left|u_{x}\right|^{2}}{2}\right)+\mathrm{div}\left(\nabla uu_{t}\right)-V(x,t)uu_{t}.\label{eq:vector}
\end{eqnarray}
\begin{thm}
\label{thm:generalC}Let $\left|v\right|<1$. Suppose
\begin{equation}
\partial_{tt}u-\Delta u+V(x,t)u=0
\end{equation}
and
\begin{equation}
\left|V(x,\mu x_{1})\right|\lesssim\frac{1}{\left\langle x\right\rangle ^{2}}
\end{equation}
 for $0\leq\left|\mu\right|\leq\left|v\right|<1$. Then

\begin{eqnarray}
\int\left|\nabla_{x}u\left(x_{1},x_{2},x_{3},vx_{1}\right)\right|^{2}+\left|\partial_{t}u\left(x_{1},x_{2},x_{3},vx_{1}\right)\right|^{2}dx\nonumber \\
\simeq\int\left|\nabla_{x}u\left(x_{1},x_{2},x_{3},0\right)\right|^{2}+\left|\partial_{t}u\left(x_{1},x_{2},x_{3},0\right)\right|^{2}dx,\label{eq:generalC}
\end{eqnarray}
where the implicit constant depends on $v$ and $V$.
\end{thm}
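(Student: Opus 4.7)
The plan is to deduce Theorem \ref{thm:generalC} as an immediate consequence of the two exterior estimates already established in Lemma \ref{lem:upper} and Lemma \ref{lem:lower}, by letting the inner radius shrink to zero. The key observation is that in both auxiliary lemmas, the dilation factors $M(v) = 1/(1-|v|)$ and $M_1(v) = 1/(1+|v|)$, as well as the implicit constants, depend only on $v$ and $V$ and are uniform in $R > 0$. This uniformity is what allows a clean limit.

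For the upper bound, I apply Lemma \ref{lem:upper} with $R > 0$ and let $R \to 0^+$. The two exterior regions $\{|x| > M(v) R\}$ and $\{|x| > R\}$ both exhaust $\mathbb{R}^3 \setminus \{0\}$, which has full measure. Since throughout this section $u$ is assumed smooth and rapidly decaying, both sides are finite, and the monotone convergence theorem yields
\begin{equation*}
\int_{\mathbb{R}^3} \bigl(|\nabla_x u(x, vx_1)|^2 + |\partial_t u(x, vx_1)|^2\bigr)\, dx \lesssim \int_{\mathbb{R}^3} \bigl(|\nabla_x u(x, 0)|^2 + |\partial_t u(x, 0)|^2\bigr)\, dx.
\end{equation*}
The reverse bound is obtained by applying exactly the same limiting procedure to Lemma \ref{lem:lower}. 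Combining the two inequalities gives the equivalence \eqref{eq:generalC}.

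In short, the substantive work has already been carried out in the two preceding lemmas, where the local energy flux through the slanted hyperplane $\{t = v x_1\}$ was compared to the horizontal flux through $\{t = 0\}$ by the space-time divergence theorem and a Grönwall argument in $v$. The present theorem is therefore essentially a packaging step. The only thing to verify is that the constants do not blow up as $R \to 0^+$, which is already encoded in the lemma statements. A standard density argument then promotes \eqref{eq:generalC} from the smooth, rapidly decaying class used in this section to arbitrary finite-energy solutions, as noted in the discussion preceding the lemmas.
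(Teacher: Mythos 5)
Your proposal matches the paper's proof essentially exactly: the paper also deduces Theorem \ref{thm:generalC} by applying Lemma \ref{lem:upper} and Lemma \ref{lem:lower} with an arbitrary $R>0$ and letting $R\to 0^{+}$ via the monotone/dominated convergence theorem, relying on the $R$-uniformity of the constants. There is nothing missing or different.
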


\begin{proof}
We apply the space-time divergence theorem to
\begin{equation}
\left(\nabla uu_{t},-\left(\frac{\left|u_{t}\right|^{2}}{2}+\frac{\left|u_{x}\right|^{2}}{2}\right)\right)
\end{equation}
in the region bounded in planes $\left(x_{1},x_{2},x_{3},vx_{1}\right)$
and $\left(x_{1},x_{2},x_{3},0\right)$. Focusing on $x_{1}\geq0$
part which we denote as $Y^{+}$ (the other piece follows similarly),
we note that the unit outward-pointing normal vector on the plane
$Y_{T}:=\left(x_{1},x_{2},x_{3},vx_{1}\right)$ is
\begin{equation}
\frac{1}{\sqrt{v^{2}+1}}\left(-v,0,0,1\right)
\end{equation}
and the outward-pointing normal vector on the bottom of $Y^{+}$,
$Y_{B}$, is
\begin{equation}
(0,0,0,-1).
\end{equation}
Hence
\begin{eqnarray}
\frac{1}{\sqrt{v^{2}+1}}\int_{Y_{T}^{+}}\left[v\partial_{x_{1}}uu_{t}-\left(\frac{\left|u_{t}\right|^{2}}{2}+\frac{\left|u_{x}\right|^{2}}{2}\right)\right]\,dx+\int_{Y_{B}^{+}}\left(\frac{\left|u_{t}\right|^{2}}{2}+\frac{\left|u_{x}\right|^{2}}{2}\right)\,dx\label{eq:div1}\\
=\int_{Y^{+}}V(x,t)uu_{t}\,dxdt.
\end{eqnarray}
We will prove
\begin{eqnarray}
\int\left|\nabla_{x}u\left(x_{1},x_{2},x_{3},vx_{1}\right)\right|^{2}+\left|\partial_{t}u\left(x_{1},x_{2},x_{3},vx_{1}\right)\right|^{2}dx\nonumber \\
\lesssim\int\left|\nabla_{x}u\left(x_{1},x_{2},x_{3},0\right)\right|^{2}+\left|\partial_{t}u\left(x_{1},x_{2},x_{3},0\right)\right|^{2}dx
\end{eqnarray}
and the inequality from the other side can be established after a
simple change of variable.

Denoting $\left[|x|\right]:=\sqrt{\left(1+\mu\right)^{2}x_{1}^{2}+x_{2}^{2}+x_{3}^{2}}$,
we define
\begin{equation}
L_{+}^{U}\left(u,\mu\right)=\int_{x_{1}>0}\left[|x|\right]\left|V\left(x,\mu x_{1}\right)\right|\left|u\left(x_{1},x_{2},x_{3},\mu x_{1}\right)\right|\left|u_{t}\left(x_{1},x_{2},x_{3},\mu x_{1}\right)\right|dx
\end{equation}
and 
\begin{equation}
E_{+}^{U}\left(u,\mu\right)=\int_{x_{1}>0}\left|\nabla_{x}u\left(x_{1},x_{2},x_{3},\mu x_{1}\right)\right|^{2}+\left|\partial_{t}u\left(x_{1},x_{2},x_{3},\mu x_{1}\right)\right|^{2}dx.
\end{equation}
Since $\left|v\right|<1$, from \eqref{eq:div1}, one has
\begin{eqnarray}
E_{+}^{U}\left(u,v\right)\lesssim\frac{1}{\sqrt{v^{2}+1}}\int_{Y_{T}^{+}}\left[-v\partial_{x_{1}}uu_{t}+\left(\frac{\left|u_{t}\right|^{2}}{2}+\frac{\left|u_{x}\right|^{2}}{2}\right)\right]\,dx\nonumber \\
\lesssim\int_{Y_{B}^{+}}\left(\frac{\left|u_{t}\right|^{2}}{2}+\frac{\left|u_{x}\right|^{2}}{2}\right)\,dx+\int_{Y^{+}}\left|V(x,t)uu_{t}\right|\,dxdt\nonumber \\
=E_{+}^{U}\left(u,0\right)+\int_{Y^{+}}\left|V(x,t)uu_{t}\right|\,dxdt.
\end{eqnarray}
To deal with
\[
\int_{Y^{+}}\left|V(x,t)uu_{t}\right|\,dxdt
\]
we apply a change of variable and Fubini's theorem,
\[
\int_{Y^{+}}\left|V(x,t)uu_{t}\right|\,dxdt\lesssim\int_{0}^{v}\left(L_{+}^{U}\left(u,\mu\right)\right)\,d\mu.
\]
Note that by the decay assumption on the potential and apply Hardy's
inequality,
\begin{align*}
L_{+}^{U}\left(u,\mu\right) & \lesssim\int\frac{1}{\left\langle \sqrt{\left(1+\mu\right)^{2}x_{1}^{2}+x_{2}^{2}+x_{3}^{2}}\right\rangle }\left|uu_{t}\right|dx\\
 & \lesssim E_{+}^{U}\left(u,\mu\right).
\end{align*}
Therefore using notations above, we obtain
\begin{equation}
E_{+}^{U}\left(u,v\right)\lesssim\frac{\sqrt{v^{2}+1}}{\left|1-\left|v\right|\right|}\left(E_{+}^{U}\left(u,0\right)+\int_{0}^{v}E_{+}^{U}\left(u,\mu\right)\,d\mu\right).
\end{equation}
By Gr\"onwall's inequality with respect to $v$, it follows that
\begin{equation}
E_{+}^{U}\left(u,v\right)\lesssim E_{+}^{U}\left(u,0\right)
\end{equation}
as desired provided $\left|v\right|<1$.
\end{proof}

\subsubsection{Inhomogeneous comparison}

In nonlinear applications, we also need to handle inhomogeneous equations.
So here we briefly discuss the energy comparison.

Suppose
\begin{equation}
\partial_{tt}u-\Delta u+V(x,t)u=F,
\end{equation}
then it is clear that
\begin{eqnarray}
Fu_{t} & = & u_{t}\left(\square u-V(t)u\right)\nonumber \\
 & = & -\partial_{t}\left(\frac{\left|u_{t}\right|^{2}}{2}+\frac{\left|u_{x}\right|^{2}}{2}\right)+\mathrm{div}\left(\nabla uu_{t}\right)-V(x,t)uu_{t}.\label{eq:vector-1}
\end{eqnarray}
We again apply the space-time divergence theorem to
\begin{equation}
\left(\nabla uu_{t},-\left(\frac{\left|u_{t}\right|^{2}}{2}+\frac{\left|u_{x}\right|^{2}}{2}\right)\right).
\end{equation}
\begin{thm}
\label{thm:generalC-1}Let $\left|v\right|<1$. Suppose
\begin{equation}
\partial_{tt}u-\Delta u+V(x,t)u=F(x,t)
\end{equation}
and
\begin{equation}
\left|V(x,\mu x_{1})\right|\lesssim\frac{1}{\left\langle x\right\rangle ^{2}}
\end{equation}
 for $0\leq\left|\mu\right|<1$. Then

\begin{eqnarray}
\int\left|\nabla_{x}u\left(x_{1},x_{2},x_{3},vx_{1}\right)\right|^{2}+\left|\partial_{t}u\left(x_{1},x_{2},x_{3},vx_{1}\right)\right|^{2}dx\nonumber \\
\text{\ensuremath{\lesssim}}\int\left|\nabla_{x}u\left(x_{1},x_{2},x_{3},0\right)\right|^{2}+\left|\partial_{t}u\left(x_{1},x_{2},x_{3},0\right)\right|^{2}dx\label{eq:generalC-1}\\
+\int_{\left|t\right|\leq\left|x\right|}\sqrt{\left|x\right|^{2}+t^{2}}\left|F(x,t)\right|^{2}dxdt\nonumber 
\end{eqnarray}
and
\begin{eqnarray}
\int\left|\nabla_{x}u\left(x_{1},x_{2},x_{3},0\right)\right|^{2}+\left|\partial_{t}u\left(x_{1},x_{2},x_{3},0\right)\right|^{2}dx\nonumber \\
\text{\ensuremath{\lesssim}}\int\left|\nabla_{x}u\left(x_{1},x_{2},x_{3},vx_{1}\right)\right|^{2}+\left|\partial_{t}u\left(x_{1},x_{2},x_{3},vx_{1}\right)\right|^{2}dx\label{eq:generalC-1}\\
+\int_{\left|t\right|\leq\left|x\right|}\sqrt{\left|x\right|^{2}+t^{2}}\left|F(x,t)\right|^{2}dxdt\nonumber 
\end{eqnarray}
where the implicit constant depends on $v$ and $V$.
\end{thm}

\begin{proof}
The proof follows the identical arguments as Theorem \ref{thm:generalC}.
The only change is that \eqref{eq:div1} is replaced by
\begin{eqnarray}
\frac{1}{\sqrt{v^{2}+1}}\int_{Y_{T}^{+}}\left[v\partial_{x_{1}}uu_{t}-\left(\frac{\left|u_{t}\right|^{2}}{2}+\frac{\left|u_{x}\right|^{2}}{2}\right)\right]\,dx+\int_{Y_{B}^{+}}\left(\frac{\left|u_{t}\right|^{2}}{2}+\frac{\left|u_{x}\right|^{2}}{2}\right)\,dx\nonumber \\
=\int_{Y^{+}}V(x,t)uu_{t}\,dxdt+\int_{Y^{+}}Fu_{t}\,dxdt,
\end{eqnarray}
where $n_{L}(x)$ is a vector of norm $1$.

All other terms can be handled as Lemma Theorem \ref{thm:generalC}
and the last term is bounded by
\begin{align*}
\int_{Y^{+}}\left|Fu_{t}\right|\,dxdt & \lesssim\int_{0}^{v}\int\,\left(\left[|x|\right]F\right)^{2}\,dxd\mu+\int_{0}^{v}\left(L_{+}^{U}\left(u,\mu\right)\right)\,d\mu\\
 & \lesssim\int_{\left|t\right|\leq\left|x\right|}\sqrt{\left|x\right|^{2}+t^{2}}\left|F(x,t)\right|^{2}dxdt+\int_{0}^{v}\left(L_{+}^{U}\left(u,\mu\right)\right)\,d\mu.
\end{align*}
Then one can apply Gr\"onwall's inequality with respect to $v$ by
the same way as Theorem \ref{thm:generalC} and conclude the desired
result.
\end{proof}
From the theorem above, we know initial energy with respect to different
frames stays comparable up to $\int_{\left|t\right|\leq\left|x\right|}\sqrt{\left|x\right|^{2}+t^{2}}\left|F(x,t)\right|^{2}dxdt$.

\subsection{Agmon's estimates via wave equations}

As a by product of Theorem \ref{thm:generalC}, we show Agmon's estimates
\cite{Agm} for the decay of eigenfunctions associated with negative
eigenvalues of
\begin{equation}
H=-\Delta+V.
\end{equation}
 Again, we restrict our attention to the class of potentials satisfying
the assumption 
\begin{equation}
\left|V(x)\right|\leq C_{V}\left(1+x^{2}\right)^{-1},\,\,\forall x\in\mathbb{R}^{3}.\label{eq:decay}
\end{equation}
As in Remark \ref{rem:dim}, all arguments and discussions are valid
for $x\in\mathbb{R}^{n}$.
\begin{thm}[Agmon]
\label{thm:Agmon}Let $V$ satisfy the assumption \eqref{eq:decay}.
Suppose $\phi\in H^{2}$
\begin{equation}
-\Delta\phi+V\phi=E\phi,\,\,E<0.\label{eq:eigen}
\end{equation}
Then $\forall\alpha\in[0,2\sqrt{-E})$
\begin{equation}
\int_{\mathbb{R}^{3}}e^{\alpha\left|x\right|}\left|\phi(x)\right|^{2}\,dx\simeq\int_{\mathbb{R}^{3}}\left|\phi(x)\right|^{2}\,dx,\label{eq:L2decay}
\end{equation}
with implicit constants depending on $\alpha,\,V$.

Furthermore, if $V\in H^{k}\mbox{\ensuremath{\left(\mathbb{R}^{3}\right)} }$where
$k>\frac{3}{2}$ and for $0\leq i\leq k$,
\begin{equation}
\left|\nabla^{i}V(x)\right|\leq C_{V,i}\left(1+x^{2}\right)^{-1}
\end{equation}
 then
\begin{equation}
\left|\phi(x)\right|\lesssim e^{-\frac{\alpha}{2}\left|x\right|}.\label{eq:pointwise}
\end{equation}
\end{thm}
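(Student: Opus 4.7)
The strategy is to realize the eigenfunction $\phi$ as the spatial profile of an exponential-in-time solution of the wave equation associated to $V$, and then apply the energy comparison on slanted hyperplanes (Theorem \ref{thm:generalC}) to trade temporal growth for spatial decay. Set $\lambda = \sqrt{-E}>0$ and $u(x,t) := e^{-\lambda t}\phi(x)$. A direct check, $\partial_{tt}u = \lambda^2 u$ while $-\Delta u + Vu = e^{-\lambda t}(-\Delta\phi+V\phi) = Ee^{-\lambda t}\phi = -\lambda^2 u$, shows $u$ solves $\partial_{tt}u-\Delta u+Vu=0$. The assumption \eqref{eq:decay} gives $|V(x,\mu x_1)| = |V(x)| \lesssim \langle x\rangle^{-2}$ uniformly in $\mu$, so Theorem \ref{thm:generalC} applies; moreover, since both $-\Delta$ and the hypothesis \eqref{eq:decay} are rotation invariant, we may boost along any unit direction $\omega \in S^2$ after rotating coordinates to reduce to the $e_1$ case treated in Section \ref{sec: Lorentz}.

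For $|v|<1$ and $\omega \in S^2$, the gradient and time derivative of $u$ restricted to the slanted hyperplane $\{t=v\,\omega\cdot x\}$ are $e^{-\lambda v\omega\cdot x}\nabla\phi(x)$ and $-\lambda e^{-\lambda v\omega\cdot x}\phi(x)$, so \eqref{eq:generalC} reads
\[
\int_{\mathbb{R}^3} e^{-2\lambda v\,\omega\cdot x}\bigl(|\nabla\phi(x)|^2 + \lambda^2|\phi(x)|^2\bigr)\,dx \lesssim \int_{\mathbb{R}^3}\bigl(|\nabla\phi|^2 + \lambda^2|\phi|^2\bigr)\,dx,
\]
with implicit constant independent of $\omega$. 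Integrating this inequality in $\omega$ over $S^2$ and using the identity $\int_{S^2} e^{c\,\omega\cdot x}\,d\sigma(\omega) = 4\pi\sinh(|c||x|)/(|c||x|) \gtrsim e^{|c||x|}/(1+|x|)$ yields
\[
\int_{\mathbb{R}^3}\frac{e^{2\lambda|v||x|}}{1+|x|}\bigl(|\nabla\phi|^2 + \lambda^2|\phi|^2\bigr)\,dx \lesssim \int_{\mathbb{R}^3}\bigl(|\nabla\phi|^2 + \lambda^2|\phi|^2\bigr)\,dx.
\]
Given $\alpha \in [0,2\lambda)$, pick $|v|<1$ with $2\lambda|v|>\alpha$; then $(1+|x|)e^{-(2\lambda|v|-\alpha)|x|}$ is uniformly bounded, so absorbing the polynomial factor gives $\int e^{\alpha|x|}(|\nabla\phi|^2+\lambda^2|\phi|^2)\,dx \lesssim \int(|\nabla\phi|^2+\lambda^2|\phi|^2)\,dx$. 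Pairing $H\phi=E\phi$ with $\phi$ shows $\int|\nabla\phi|^2\,dx = \int(E-V)|\phi|^2\,dx \lesssim \|\phi\|_2^2$, so dropping the gradient on the left produces \eqref{eq:L2decay}; the reverse inequality is trivial since $e^{\alpha|x|}\geq 1$.

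For the pointwise bound \eqref{eq:pointwise}, I would invoke the resolvent identity $\phi = -R_0(E)(V\phi)$, whose kernel in $\mathbb{R}^3$ is $e^{-\lambda|x-y|}/(4\pi|x-y|)$. Writing $\phi(y) = e^{-\alpha|y|/2}\tilde\phi(y)$ with $\tilde\phi \in L^2$ from \eqref{eq:L2decay} applied at exponent $\alpha$, and using $|x-y|\geq|x|-|y|$ to extract $e^{-\alpha|x|/2}$ from the kernel, the residual integral carries the kernel $e^{-(\lambda-\alpha/2)|x-y|}/|x-y|$; Cauchy--Schwarz against $\tilde\phi$, combined with the uniform bound $\sup_x \int|V(y)|^2 e^{-2(\lambda-\alpha/2)|x-y|}/|x-y|^2\,dy < \infty$ coming from the exponential decay of the residual kernel and boundedness of $V$, produces $|\phi(x)|\lesssim e^{-\alpha|x|/2}$. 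The higher regularity hypothesis on $V$ enters to guarantee enough smoothness of $\phi$ through elliptic bootstrap so that this pointwise manipulation is justified. The principal conceptual obstacle in the argument is the spherical-averaging step, which only yields $\sinh(c|x|)/(c|x|)$ rather than $e^{c|x|}$ and thus costs one polynomial factor; this loss is precisely why we reach any $\alpha < 2\sqrt{-E}$ but not the critical exponent itself.
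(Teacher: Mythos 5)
Your proof is correct. For the $L^2$ decay estimate \eqref{eq:L2decay} you follow the paper's strategy in spirit — trading temporal growth of a wave solution against spatial decay via the energy comparison of Theorem \ref{thm:generalC} — but you execute two steps more cleanly. First, instead of invoking the half-wave operator $e^{ivt\sqrt{H}}$ on the eigenfunction (which, as the paper implicitly does, requires passing from the energy norm appearing in Theorem \ref{thm:generalC} to a pure $L^2$ norm), you use the explicit stationary solution $u(x,t)=e^{-\lambda t}\phi(x)$, so both $\nabla_x u$ and $\partial_t u$ on the slanted plane carry the same exponential weight, and the gradient term is subsequently removed by pairing the eigenvalue equation with $\phi$. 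Second, the paper reduces to per-coordinate weights $e^{\alpha|x_j|}$, and combining such one-directional bounds into a bound with weight $e^{\alpha|x|}$ at the \emph{same} rate $\alpha$ needs either a spherical-averaging argument or a conical decomposition with small opening angle; your $\int_{S^2}e^{c\,\omega\cdot x}\,d\sigma(\omega)=4\pi\sinh(|c||x|)/(|c||x|)$ identity supplies precisely this missing step, at the cost of one polynomial factor that you correctly absorb by taking $|v|$ slightly larger, which is exactly why the range is the open interval $[0,2\sqrt{-E})$.

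For the pointwise bound \eqref{eq:pointwise} your route is genuinely different from the paper's. The paper performs a local elliptic bootstrap: it tests the equation against $\bar\phi\,\psi^2(\cdot-y)$ for a bump $\psi$, obtains uniform local $W^{k+1,2}$ bounds on balls by induction on derivatives (using $\nabla^iV\in L^\infty$), and then applies Sobolev embedding, which is where the hypothesis $k>3/2$ enters. You instead write $(-\Delta+\lambda^2)\phi=-V\phi$, invert by the Yukawa kernel $e^{-\lambda|x-y|}/(4\pi|x-y|)$, split the kernel exponential as $e^{-(\lambda-\alpha/2)|x-y|}e^{-\alpha|x-y|/2}$, use the triangle inequality $|x|\le|x-y|+|y|$ to peel off $e^{-\alpha|x|/2}$, and close with Cauchy--Schwarz against $\tilde\phi=e^{\alpha|\cdot|/2}\phi\in L^2$ from the first part. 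This argument only needs $V\in L^\infty$ and the Birman--Schwinger-type finiteness $\sup_x\int|V(y)|^2e^{-2(\lambda-\alpha/2)|x-y|}|x-y|^{-2}\,dy<\infty$, so it is strictly lighter on hypotheses — your closing remark that the extra regularity on $V$ ``enters to guarantee enough smoothness of $\phi$'' is in fact unnecessary for your proof (continuity of $\phi$ already follows from $\phi\in W^{2,2}(\mathbb{R}^3)\hookrightarrow C^0$), and better reflects why the paper's bootstrap needs it.
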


\begin{proof}
It suffices to show $\forall\alpha\in[0,2\sqrt{-E})$
\[
\int_{\mathbb{R}^{3}}e^{\alpha\left|x_{j}\right|}\left|\phi(x)\right|^{2}\,dx\simeq\int_{\mathbb{R}^{3}}\left|\phi(x)\right|^{2}\,dx,\,\,\forall j=1,2,3.
\]
Without loss of generality, we pick $j=1$.

With Theorem \ref{thm:generalC}, we know if $u_{tt}+Hu=0$, then
with $\left|v\right|<1$
\begin{eqnarray}
\int\left|\nabla_{x}u\left(x_{1},x_{2},x_{3},vx_{1}\right)\right|^{2}+\left|\partial_{t}u\left(\gamma x_{1},x_{2},x_{3},vx_{1}\right)\right|^{2}dx\nonumber \\
\simeq\int\left|\nabla_{x}u\left(x_{1},x_{2},x_{3},0\right)\right|^{2}+\left|\partial_{t}u\left(x_{1},x_{2},x_{3},0\right)\right|^{2}dx.
\end{eqnarray}
We can rewrite the result above using half-wave operator $e^{it\sqrt{H}}$,
for $f\in L^{2}$ then
\begin{equation}
\int\left(\left|e^{ivt\sqrt{H}}f\right|_{t=x_{1}}^{2}\right)\,dx\simeq\int\left|f\right|^{2}\,dx.
\end{equation}
We pick $f=\phi$ satisfying
\begin{equation}
-\Delta\phi+V\phi=E\phi,\,\,E<0,
\end{equation}
then
\begin{equation}
\int e^{-vx_{1}2\sqrt{-E}}\left|\phi\right|^{2}\,dx=\int\left|e^{-vx_{1}\sqrt{-E}}\phi\right|^{2}\,dx\simeq\int\left|\phi\right|^{2}\,dx.
\end{equation}
With $v$ replaced by $-v$, we obtain
\begin{equation}
\int e^{vx_{1}2\sqrt{-E}}\left|\phi\right|^{2}\,dx=\int\left|e^{-vx_{1}\sqrt{-E}}\phi\right|^{2}\,dx\simeq\int\left|\phi\right|^{2}\,dx.
\end{equation}
Therefore,
\begin{equation}
\int e^{\left|2v\sqrt{-E}\right|\left|x_{1}\right|}\left|\phi\right|^{2}\,dx\simeq\int\left|\phi\right|^{2}\,dx.
\end{equation}
Fixed an $\alpha\in[0,2\sqrt{-E})$, we can find $\left|v\right|\in[0,1)$
such that $\alpha=\left|2v\sqrt{-E}\right|$, then it follows that
\begin{equation}
\int e^{\alpha\left|x_{1}\right|}\left|\phi\right|^{2}\,dx\simeq\int\left|\phi\right|^{2}\,dx.
\end{equation}
Therefore the estimate \eqref{eq:L2decay} is proved. 

Next we move to \eqref{eq:pointwise}. Since
\begin{equation}
-\Delta\phi+V\phi=E\phi,\,\,E<0,
\end{equation}
then
\begin{equation}
\int\left|\nabla\phi\right|^{2}dx+\int V\left|\phi\right|^{2}dx=E\int\left|\phi\right|^{2}dx,
\end{equation}
\begin{equation}
\int\left|\nabla\phi\right|^{2}dx\leq\left\Vert V\right\Vert _{L^{\infty}}\int\left|\phi\right|^{2}dx.
\end{equation}
Differentiating the equation, for any multi-index $\beta$
\begin{equation}
-\Delta\left(\partial^{\beta}\phi\right)+\partial^{\beta}\left(V\phi\right)=E\partial^{\beta}\phi
\end{equation}
we can conclude
\begin{equation}
\int\left|\nabla\left(\partial^{\beta}\phi\right)\right|^{2}dx\leq\int\partial^{\beta}\left(V\phi\right)\partial^{\beta}\phi\,dx.
\end{equation}
By induction, we obtain
\begin{equation}
\int\left|\nabla\left(\partial^{\beta}\phi\right)\right|^{2}dx\leq\left\Vert V\right\Vert _{W^{\left|\beta\right|,\infty}}\int\left|\phi\right|^{2}dx.
\end{equation}
Let $\psi$ be a smooth bump-cutoff function such that $\psi=1$ in
$B_{1}(0)$ and $\psi=0$ in $\mathbb{R}^{3}\backslash B_{2}(0)$.
We localize our estimate,
\begin{equation}
\int\left(-\Delta\phi(x)+V\phi(x)\right)\bar{\phi}(x)\psi^{2}(x-y)\,dx=E\int\left|\phi(x)\right|^{2}\psi^{2}(x-y)\,dx.
\end{equation}
Integrating by parts, we know
\begin{eqnarray}
\int\left(-\Delta\phi(x)+V\phi(x)\right)\bar{\phi}(x)\psi^{2}(x-y)\,dx\nonumber \\
=\int V\left|\phi(x)\right|^{2}\psi^{2}(x-y)\,dx\\
+\int\left|\nabla\phi(x)\right|^{2}\psi^{2}(x-y)\,dx\nonumber \\
+2\int\nabla\phi(x)\bar{\phi}(x)\psi(x-y)\nabla\psi(x-y)\,dx.\nonumber 
\end{eqnarray}
Therefore, by the Cauchy-Schwarz inequality,
\begin{eqnarray}
\int\left|\nabla\phi(x)\right|^{2}\psi^{2}(x-y)\,dx & \lesssim & E\int\left|\phi(x)\right|^{2}\psi^{2}(x-y)\,dx\nonumber \\
 &  & +\int V\left|\phi(x)\right|^{2}\psi^{2}(x-y)\,dx\\
 &  & +2\int\left|\phi(x)\nabla\psi(x-y)\right|^{2}\,dx.\nonumber 
\end{eqnarray}
It follows that
\begin{equation}
\sup_{y\in\mathbb{R}^{3}}\int_{\left|x-y\right|\leq1}\left|\nabla\phi(x)\right|^{2}\,dx\lesssim\left(\left\Vert V\right\Vert _{L^{\infty}}+1+\left|E\right|\right)\int_{\left|x-y\right|\leq2}\left|\phi(x)\right|^{2}\,dx.
\end{equation}
Inductively as above, we have
\begin{equation}
\sup_{y\in\mathbb{R}^{3}}\int_{\left|x-y\right|\leq1}\left|\nabla\left(\partial^{\beta}\phi\right)\right|^{2}\,dx\lesssim\left(\left\Vert V\right\Vert _{W^{\left|\beta\right|,\infty}}+1+\left|E\right|\right)\int_{\left|x-y\right|\leq2}\left|\phi(x)\right|^{2}\,dx.
\end{equation}
Finally by Sobolev's embedding theorem,
\begin{eqnarray}
\sup_{y\in\mathbb{R}^{3}}\sup_{\left|x-y\right|\leq1}\left|\phi(x)\right|^{2} & \lesssim & \sum_{\beta\leq k}\sup_{y\in\mathbb{R}^{3}}\int_{\left|x-y\right|\leq1}\left|\left(\partial^{\beta}\phi\right)\right|^{2}\,dx\nonumber \\
 & \lesssim & \int_{\left|x-y\right|\leq2}\left|\phi(x)\right|^{2}\,dx\\
 & \lesssim & e^{-\alpha\left(\left|y\right|-2\right)}\int_{\left|x-y\right|\leq2}e^{\alpha\left|x\right|}\left|\phi(x)\right|^{2}\,dx\nonumber \\
 & \lesssim & e^{-\alpha\left|y\right|}\int\left|\phi(x)\right|^{2}\,dx\nonumber \\
 & \lesssim & e^{-\alpha\left|y\right|}.\nonumber 
\end{eqnarray}
Hence,
\begin{equation}
\left|\phi(y)\right|\lesssim e^{-\frac{\alpha}{2}\left|y\right|}
\end{equation}
as claimed.
\end{proof}

\section{Endpoint Reversed Strichartz Estimates\label{sec:revered}}

In \cite{GC2}, we analyzed the endpoint reversed Strichartz estimates
along slanted lines for both homogeneous and inhomogeneous cases.
In this section, we will study the reversed Strichartz estimates along
general trajectories in several different settings.

Recall that a trajectory $\vec{Y}(t)\in\mathbb{R}^{3}$ is called
an admissible trajectory if $\vec{Y}(t)$ is $C^{1}$ and there exists
$0\leq\ell<1$ such $\left|\vec{Y}(t)\right|\leq\ell<1$ for $t\in\mathbb{R}$.

\subsection{Free wave equations}

In this subsection, we set
\begin{equation}
u(x,t)=\frac{\sin\left(t\sqrt{-\Delta}\right)}{\sqrt{-\Delta}}f+\cos\left(t\sqrt{-\Delta}\right)g+\int_{0}^{t}\frac{\sin\left(\left(t-s\right)\sqrt{-\Delta}\right)}{\sqrt{-\Delta}}F(s)\,ds
\end{equation}
and
\begin{equation}
u^{S}(x,t):=u\left(x+\vec{Y}(t),t\right).
\end{equation}
We first establish Theorem \ref{thm:reversedF}, the $L_{x}^{\infty}L_{t}^{2}$
endpoint of reversed estimates.
\begin{proof}[Proof of Theorem \ref{thm:reversedF}]
For the first term,
\begin{equation}
u_{1}(x,t)=\frac{\sin\left(t\sqrt{-\Delta}\right)}{\sqrt{-\Delta}}f=\frac{1}{4\pi t}\int_{\left|x-y\right|=t}f(y)\,\sigma\left(dy\right).
\end{equation}
So in polar coordinates,
\begin{eqnarray}
\left\Vert u_{1}^{S}(x,t)\right\Vert _{L_{t}^{2}[0,\infty)}^{2} & \lesssim & \int_{0}^{\infty}\left(\int_{\mathbb{S}}f(x+\vec{Y}(r)+r\omega)r\,d\omega\right)^{2}dr.
\end{eqnarray}
Up to translation, it suffices to estimate when $x=0$, so we consider
\[
\int_{0}^{\infty}\left(\int_{\mathbb{S}}f(\vec{Y}(r)+r\omega)r\,d\omega\right)^{2}dr.
\]
By Cauchy-Schwarz, one has

\begin{align}
\int_{0}^{\infty}\left(\int_{\mathbb{S}}f(\vec{Y}(r)+r\omega)r\,d\omega\right)^{2}dr & \lesssim\left(\int_{0}^{\infty}\int_{\mathbb{S}}f(\vec{Y}(r)+r\omega))^{2}r^{2}\,d\omega dr\right)\left(\int_{\mathcal{S}^{2}}d\omega\right).
\end{align}
Performing the change of variable that
\begin{equation}
\left(r,\omega\right)\rightarrow x'=\vec{Y}(r)+r\omega
\end{equation}
we compare the Jacobian of this change variable with the Jacobian
of the regular polar coordinate:
\begin{equation}
\left(r,\omega\right)\rightarrow x=r\omega.
\end{equation}
It is equivalent to show the change of variable
\begin{equation}
x\rightarrow x'=\vec{Y}\left(\left|x\right|\right)+x
\end{equation}
has a Jacobian which is bounded from above and below. 

Letting $\vec{Y}\left(\left|x\right|\right)=\left(Y_{1}\left(\left|x\right|\right),Y_{2}\left(\left|x\right|\right),Y_{3}\left(\left|x\right|\right)\right)$,
we compute the Jocobian and obtain
\begin{equation}
\frac{\partial x'}{\partial x}=I+\left\{ \begin{array}{ccc}
Y_{1}^{'}\left(\left|x\right|\right)\frac{x_{1}}{\left|x\right|} & Y_{1}^{'}\left(\left|x\right|\right)\frac{x_{2}}{\left|x\right|} & Y_{1}^{'}\left(\left|x\right|\right)\frac{x_{3}}{\left|x\right|}\\
Y_{2}^{'}\left(\left|x\right|\right)\frac{x_{1}}{\left|x\right|} & Y_{2}^{'}\left(\left|x\right|\right)\frac{x_{2}}{\left|x\right|} & Y_{2}^{'}\left(\left|x\right|\right)\frac{x_{3}}{\left|x\right|}\\
Y_{3}^{'}\left(\left|x\right|\right)\frac{x_{1}}{\left|x\right|} & Y_{3}^{'}\left(\left|x\right|\right)\frac{x_{2}}{\left|x\right|} & Y_{3}^{'}\left(\left|x\right|\right)\frac{x_{3}}{\left|x\right|}
\end{array}\right\} =:I+R.
\end{equation}
Then it is reduced to show that $R$ has an operator norm less than
$1$ uniformly with respect to $x$. 

Setting $\vec{d}(x)=\left(\frac{x_{1}}{\left|x\right|},\frac{x_{3}}{\left|x\right|},\frac{x_{3}}{\left|x\right|}\right)$,
we notice that $\forall\xi\in\mathbb{R}^{3},$
\begin{align}
\left|\xi R\xi^{T}\right| & =\xi\vec{Y}'\left(\left|x\right|\right)\otimes\vec{d}(x)\xi^{T}\nonumber \\
 & =\left\langle \vec{Y}'\left(\left|x\right|\right),\xi\right\rangle \left\langle \vec{d}(x),\xi\right\rangle \nonumber \\
 & \lesssim\left|\vec{Y}'\left(\left|x\right|\right)\right|\left|\vec{d}(x)\right|\left|\xi\right|^{2}\\
 & \lesssim\left|\vec{Y}'\left(\left|x\right|\right)\right|\left|\xi\right|^{2}.\nonumber 
\end{align}
Since $\left|\vec{Y}'\left(\left|x\right|\right)\right|\leq\ell<1$,
$R(x)$ has an operator norm not larger than $\ell$. Hence the Jacobian
$\frac{\partial x'}{\partial x}$ is bounded from above and below
uniformly. Therefore the Jacobian of the change of variable
\begin{equation}
\left(r,\omega\right)\rightarrow x'=\vec{Y}(r)+r\omega
\end{equation}
 is comparable with the Jacobian of
\begin{equation}
\left(r,\omega\right)\rightarrow x=r\omega
\end{equation}
which is $r^{3}$ uniformly. 

So we can conclude that
\begin{align}
\left(\int_{0}^{\infty}\int_{\mathbb{S}}f(\vec{Y}(r)+r\omega))^{2}r^{2}\,d\omega dr\right) & \lesssim\int\left|f\left(x'\right)\right|^{2}\,dx'\nonumber \\
 & \lesssim\left\Vert f\right\Vert _{L^{2}}^{2}.
\end{align}
A similar argument holds for
\begin{equation}
u_{2}(x,t)=\cos\left(t\sqrt{-\Delta}\right)g.
\end{equation}
Therefore
\begin{equation}
\left\Vert u_{1}^{S}\right\Vert _{L_{x}^{\infty}L_{t}^{2}}+\left\Vert u_{2}^{S}\right\Vert _{L_{x}^{\infty}L_{t}^{2}}\lesssim\|f\|_{L^{2}}+\|g\|_{\dot{H}^{1}}.
\end{equation}
In particular,
\begin{equation}
\left\Vert u_{1}\right\Vert _{L_{x}^{\infty}L_{t}^{2}}+\left\Vert u_{2}\right\Vert _{L_{x}^{\infty}L_{t}^{2}}\lesssim\|f\|_{L^{2}}+\|g\|_{\dot{H}^{1}}.
\end{equation}
as claimed.

Next, we consider the inhomogenous case,
\begin{equation}
D(x,t)=\int_{0}^{t}\frac{\sin\left((t-s)\sqrt{-\Delta}\right)}{\sqrt{-\Delta}}F(s)\,ds.
\end{equation}
For the standard case, we consider
\begin{eqnarray*}
\left\Vert \int_{0}^{t}\frac{\sin\left((t-s)\sqrt{-\Delta}\right)}{\sqrt{-\Delta}}F(s)\,ds\right\Vert _{L_{t}^{2}} & = & \left\Vert \int_{0}^{t}\int_{\left|x-y\right|=t-s}\frac{1}{\left|x-y\right|}F(y,s)\,\sigma\left(dy\right)ds\right\Vert _{L_{t}^{2}}\\
 & = & \left\Vert \int_{\left|x-y\right|\leq t}\frac{1}{\left|x-y\right|}F\left(y,t-\left|x-y\right|\right)\,dy\right\Vert _{L_{t}^{2}}\\
 & \lesssim & \int\frac{1}{\left|x-y\right|}\left\Vert F\left(y,t-\left|x-y\right|\right)\right\Vert _{L_{t}^{2}}dy\\
 & \lesssim & \sup_{x\in\mathbb{R}^{3}}\int\frac{1}{\left|x-y\right|}\left\Vert F\left(y,t\right)\right\Vert _{L_{t}^{2}}dy\\
 & \lesssim & \left\Vert F\right\Vert _{L_{x}^{\frac{3}{2},1}L_{t}^{2}}.
\end{eqnarray*}
Therefore, indeed,
\begin{equation}
\left\Vert D\right\Vert _{L_{x}^{\infty}L_{t}^{2}}\lesssim\left\Vert F\right\Vert _{L_{x}^{\frac{3}{2},1}L_{t}^{2}}.
\end{equation}
Actually, we have
\[
\left\Vert D\right\Vert _{L_{x}^{\infty}L_{t}^{p}}\lesssim\left\Vert F\right\Vert _{L_{x}^{\frac{3}{2},1}L_{t}^{p}},\,\,1\leq p\leq\infty.
\]
Now we consider the estimate along an admissible trajectory $\vec{Y}(t)\in\mathbb{R}^{3}$.

We first notice that from the discussion above or the argument in
Appendix D,
\begin{equation}
T:=\frac{e^{it\sqrt{-\Delta}}}{\sqrt{-\Delta}}
\end{equation}
is a bounded operator from $L_{x}^{2}$ to $L_{x}^{\infty}L_{t}^{2}$.
Also the operator $T^{S}$:
\begin{equation}
T^{S}f:=\left(Tf\right)^{S}=\left(\frac{e^{it\sqrt{-\Delta}}}{\sqrt{-\Delta}}f\right)^{S}
\end{equation}
is a bounded operator from $L_{x}^{2}$ to $L_{x}^{\infty}L_{t}^{2}$.

Writing down the inhomogeneous evolution explicitly, one has
\begin{align}
\left\Vert \int_{0}^{t}\frac{\sin\left((t-s)\sqrt{-\Delta}\right)}{\sqrt{-\Delta}}F(s)\,ds\right\Vert _{L_{t}^{2}} & =\left\Vert \int_{0}^{t}\int_{\left|x-y\right|=t-s}\frac{1}{\left|x-y\right|}F(y,s)\,\sigma\left(dy\right)ds\right\Vert _{L_{t}^{2}}\nonumber \\
 & \lesssim\left\Vert \int_{\left|x-y\right|\leq t}\frac{1}{\left|x-y\right|}F\left(y,t-\left|x-y\right|\right)\,dy\right\Vert _{L_{t}^{2}}
\end{align}
Therefore,
\begin{align}
\sup_{x\mathbb{\in R}^{3}}\left\Vert \int_{\left|x-y\right|\leq t}\frac{1}{\left|x-y\right|}F\left(y,t-\left|x-y\right|\right)\,dy\right\Vert _{L_{t}^{2}} & \lesssim\sup_{x\mathbb{\in R}^{3}}\left\Vert \int\frac{1}{\left|x-y\right|}\left|F\left(y,t-\left|x-y\right|\right)\,\right|dy\right\Vert _{L_{t}^{2}}\nonumber \\
 & \lesssim\sup_{x\mathbb{\in R}^{3}}\left\Vert \int_{0}^{\infty}\frac{\sin\left((t-s)\sqrt{-\Delta}\right)}{\sqrt{-\Delta}}\left|F(s)\right|\,ds\right\Vert _{L_{t}^{2}}\nonumber \\
 & \lesssim\sup_{x\mathbb{\in R}^{3}}\left\Vert \Re\left(TT^{*}\sqrt{-\Delta}\left|F\right|\right)\right\Vert _{L_{t}^{2}}.
\end{align}
Hence we know
\begin{align}
\sup_{x\mathbb{\in R}^{3}}\left\Vert D^{S}\left(x,t\right)\right\Vert _{L_{t}^{2}} & \lesssim\sup_{x\mathbb{\in R}^{3}}\left\Vert \Re\left(T^{S}T^{*}\sqrt{-\Delta}\left|F\right|\right)\right\Vert _{L_{t}^{2}}\nonumber \\
 & \lesssim\left\Vert \sqrt{-\Delta}\left|F\right|(x,t)\right\Vert _{L_{x}^{1}L_{t}^{2}}\nonumber \\
 & \lesssim\left\Vert \nabla F\right\Vert _{L_{x}^{1}L_{t}^{2}}.
\end{align}
If the trajectory does not change the direction, we can obtain an
estimate which does not require $\sqrt{-\Delta}F$ by a similar argument
to the estimates along slanted lines in \cite{GC2}. Without loss
of generality, we assume that the direction of the trajectory is along
$x_{1}$. Then
\begin{equation}
D^{S}(x,t)=\int_{0}^{t}\int_{\left|x+\vec{Y}(t)-y\right|=t-s}\frac{F(y,s)}{\left|x+\vec{Y}(t)-y\right|}\,\sigma\left(dy\right)ds
\end{equation}
and
\begin{eqnarray}
\left\Vert D^{S}(x,\cdot)\right\Vert _{L_{t}^{2}} & = & \left\Vert \int_{0}^{t}\int_{\left|x+\vec{Y}(t)-y\right|=t-s}\frac{F(y,s)}{\left|x+\vec{Y}(t)-y\right|}\,\sigma\left(dy\right)ds\right\Vert _{L_{t}^{2}}\nonumber \\
 & = & \left\Vert \int_{\left|y\right|\leq t}\frac{F(x+\vec{Y}(t)-y,t-\left|y\right|)}{\left|y\right|}\,dy\right\Vert _{L_{t}^{2}}\\
 & \leq & \left\Vert \int_{\mathbb{R}^{3}}\frac{\left|F(x-y,t-\left|y+\vec{Y}(t)\right|)\right|}{\left|y+\vec{Y}(t)\right|}\,dy\right\Vert _{L_{t}^{2}}\nonumber \\
 & \leq & \left\Vert \int_{\mathbb{R}^{3}}\frac{\left|F(x-y,t-\left|y+\vec{Y}(t)\right|)\right|}{\sqrt{y_{2}^{2}+y_{3}^{2}}}\,dy\right\Vert _{L_{t}^{2}},\nonumber 
\end{eqnarray}
where in the third line, we used a change of variable and for the
last inequality and reduce the norm of $y$ to the norm of the component
of $y$ orthogonal to the direction of the motion. 

Finally,
\begin{equation}
\left\Vert \int_{\mathbb{R}^{3}}\frac{F(x-y,t-\left|y+\vec{Y}(t)\right|)}{\sqrt{y_{2}^{2}+y_{3}^{2}}}\,dy\right\Vert _{L_{t}^{2}}\leq\int_{\mathbb{R}^{3}}\frac{\left\Vert F(x-y,t-\left|y+\vec{Y}(t)\right|)\right\Vert _{L_{t}^{2}}}{\sqrt{y_{2}^{2}+y_{3}^{2}}}\,dy
\end{equation}
For fixed $y$, if we apply a change of variable of $t$ here, the
Jacobian is bounded by $1-|\vec{Y}'|$ and $1+|\vec{Y}'|$, so
\begin{eqnarray}
\int_{\mathbb{R}^{3}}\frac{\left\Vert F(x-y,t-\left|y+\vec{Y}\left(t\right)\right|)\right\Vert _{L_{t}^{2}}}{\sqrt{y_{2}^{2}+y_{3}^{2}}}\,dy & \lesssim & \int_{\mathbb{R}^{3}}\frac{\left\Vert F(x-y,\cdot)\right\Vert _{L_{t}^{2}}}{\sqrt{y_{2}^{2}+y_{3}^{2}}}dy\nonumber \\
 & \lesssim & \left\Vert F\right\Vert _{L_{x_{1}}^{1}L_{\widehat{x_{1}}}^{2,1}L_{t}^{2}}
\end{eqnarray}
where $\widehat{x_{1}}$ denotes the subspace orthogonal to $x_{1}$
(more generally, the subspace orthogonal to the direction of the motion).
Here $L^{2,1}$ is the Lorentz norm and the last inequality follows
from H\"older's inequality of Lorentz spaces. Therefore,
\begin{equation}
\left\Vert D^{S}\right\Vert _{L_{x}^{\infty}L_{t}^{2}}\lesssim\left\Vert F\right\Vert _{L_{x_{1}}^{1}L_{\widehat{x_{1}}}^{2,1}L_{t}^{2}}.\label{eq:ersI-1}
\end{equation}
as claimed.

Finally, we consider the estimate with the source term $F$ along
an admissible trajectory. This follows from a duality or the same
argument as in \cite{GC2}. So we conclude that
\begin{equation}
\left\Vert D^{S}\right\Vert _{L_{x}^{\infty}L_{t}^{2}}\lesssim\left\Vert \nabla F^{S'}\right\Vert {}_{L_{x}^{1}L_{t}^{2}},
\end{equation}
and
\begin{equation}
\left\Vert D^{S}\right\Vert _{L_{x}^{\infty}L_{t}^{2}}\lesssim\left\Vert F^{S'}\right\Vert {}_{L_{x_{1}}^{1}L_{\widehat{x_{1}}}^{2,1}L_{t}^{2}}
\end{equation}
provided $\vec{Y}(t)$ moves along $x_{1}$.

The theorem is proved.
\end{proof}
\begin{rem}
We notice that by Sobolev's embedding, see \cite{CRT} and \cite{Tar}, one has
\[
\dot{W}_{x}^{1,1}\hookrightarrow L^{\frac{3}{2},1}.
\]
Therefore indeed, the estimates along general curves requires slightly
more regularity than  the standard cases.
\end{rem}

We have the other endpoint version of reversed space-time estimates
with the norm $L_{x}^{6,2}L_{t}^{\infty}$ in Theorem \ref{thm:reversedlocalF}.
\begin{proof}[Proof of  Theorem \ref{thm:reversedlocalF}]
Consider $t\ge0$ and define 
\begin{equation}
Tf=\frac{\sin\left(t\sqrt{-\Delta}\right)}{\sqrt{-\Delta}}f
\end{equation}
then 
\begin{equation}
T^{*}F=\int_{0}^{\infty}\frac{\sin\left(t\sqrt{-\Delta}\right)}{\sqrt{-\Delta}}F(t)\,dt,
\end{equation}
and
\begin{align}
TT^{*}F & =\int_{0}^{\infty}\frac{\sin\left(t\sqrt{-\Delta}\right)}{\sqrt{-\Delta}}\frac{\sin\left(s\sqrt{-\Delta}\right)}{\sqrt{-\Delta}}F(s)\,ds\\
 & =\frac{1}{2}\int_{0}^{\infty}\left(\frac{\cos\left(\left(t-s\right)\sqrt{-\Delta}\right)}{-\Delta}-\frac{\cos\left(\left(t+s\right)\sqrt{-\Delta}\right)}{-\Delta}\right)F(s)\,ds.\nonumber 
\end{align}
We compute the kernel of
\begin{equation}
\frac{\cos\left(h\sqrt{-\Delta}\right)}{-\Delta}F=\int_{\mathbb{R}^{3}}K(x,y,h)F(y)\,dy.
\end{equation}
By straightforward computations, one has
\begin{equation}
\frac{\cos\left(h\sqrt{-\Delta}\right)}{-\Delta}=\frac{1}{-\Delta}-\int_{0}^{h}\frac{\sin\left(s\sqrt{-\Delta}\right)}{\sqrt{-\Delta}}\,ds=\int_{h}^{\infty}\frac{\sin\left(s\sqrt{-\Delta}\right)}{\sqrt{-\Delta}}\,ds.
\end{equation}
By the explicit kernel of $\frac{\sin\left(s\sqrt{-\Delta}\right)}{\sqrt{-\Delta}}$,
we know that
\begin{equation}
K(x,y,h)=\begin{cases}
\frac{1}{\left|x-y\right|} & \left|x-y\right|\geq h\\
0 & \left|x-y\right|<h
\end{cases}.\label{eq:kernelC}
\end{equation}
Notice that in $\mathbb{R}^{3}$, $\frac{1}{\left|x\right|}\in L^{3,\infty}$,
so
\begin{equation}
\left\Vert \int_{0}^{\infty}\left(\frac{\cos\left(\left(t-s\right)\sqrt{-\Delta}\right)}{-\Delta}\right)F(s)\,ds\right\Vert _{L_{x}^{6,2}L_{t}^{\infty}}\lesssim\left\Vert F\right\Vert _{L_{x}^{\frac{6}{5},2}L_{t}^{1}}
\end{equation}
by Young's inequality for convolution. It follows that
\begin{equation}
\left\Vert Tf\right\Vert _{L_{x}^{6,2}L_{t}^{\infty}}=\left\Vert \frac{\sin\left(t\sqrt{-\Delta}\right)}{\sqrt{-\Delta}}f\right\Vert _{L_{x}^{6,2}L_{t}^{\infty}}\lesssim\left\Vert f\right\Vert _{L^{2}}.
\end{equation}
Now we consider the shifted version:
\begin{equation}
T^{S}f=\left(Tf\right)^{S}=\left(\frac{\sin\left(t\sqrt{-\Delta}\right)}{\sqrt{-\Delta}}f\right)^{S}
\end{equation}
From the computations above, the kernel of
\begin{equation}
T^{S}\left(T^{S}\right)^{*}
\end{equation}
can be written as two parts
\begin{equation}
K\left(x+\vec{Y}(t),y+\vec{Y}(s),t-s\right)+K\left(x+\vec{Y}(t),y+\vec{Y}(s),t+s\right).
\end{equation}
By \eqref{eq:kernelC}, we have
\begin{equation}
K\left(x+\vec{Y}(t),y+\vec{Y}(s),t-s\right)=\begin{cases}
\frac{1}{\left|x+\vec{Y}(t)-\left(y+\vec{Y}(s)\right)\right|} & \left|x+\vec{Y}(t)-\left(y+\vec{Y}(s)\right)\right|\geq\left|t-s\right|\\
0 & \left|x+\vec{Y}(t)-\left(y+\vec{Y}(s)\right)\right|<\left|t-s\right|
\end{cases}
\end{equation}
For $\left|x+\vec{Y}(t)-\left(y+\vec{Y}(s)\right)\right|\geq t-s$,
\begin{align}
\left|x+\vec{Y}(t)-\left(y+\vec{Y}(s)\right)\right| & \geq\left|x-y\right|-\left|\vec{Y}(t)-\vec{Y}(s)\right|\nonumber \\
 & \geq\left|x-y\right|-\ell\left|t-s\right|\\
 & \geq\left|x-y\right|-\ell\left|x+\vec{Y}(t)-\left(y+\vec{Y}(s)\right)\right|\nonumber 
\end{align}
Therefore,
\begin{equation}
\left|x-y\right|\lesssim\left|x-\vec{Y}(t)-\left(y-\vec{Y}(s)\right)\right|
\end{equation}
provided
\begin{equation}
\left|x+\vec{Y}(t)-\left(y+\vec{Y}(s)\right)\right|\geq t-s.
\end{equation}
Hence
\begin{equation}
\left|K\left(x+\vec{Y}(t),y+\vec{Y}(s),t-s\right)\right|_{L_{s}^{\infty}}\lesssim\frac{1}{\left|x-y\right|}.\label{eq:SfirstKe}
\end{equation}
For the second kernel, by similar computations, one has
\begin{equation}
K\left(x+\vec{Y}(t),y+\vec{Y}(s),t+s\right)=\begin{cases}
\frac{1}{\left|x+\vec{Y}(t)-\left(y+\vec{Y}(s)\right)\right|} & \left|x+\vec{Y}(t)-\left(y+\vec{Y}(s)\right)\right|\geq t+s\\
0 & \left|x+\vec{Y}(t)-\left(y+\vec{Y}(s)\right)\right|<t+s
\end{cases}.
\end{equation}
If $\left|x+\vec{Y}(t)-\left(y+\vec{Y}(s)\right)\right|\geq t+s$,
\begin{align}
\left|x+\vec{Y}(t)-\left(y+\vec{Y}(s)\right)\right| & \geq\left|x-y\right|-\left|\vec{Y}(t)-\vec{Y}(s)\right|\nonumber \\
 & \geq\left|x-y\right|-\ell\left|t-s\right|\\
 & \geq\left|x-y\right|-\ell\left|t+s\right|\nonumber \\
 & \left|\geq x-y\right|-\ell\left|x+\vec{Y}(t)-\left(y+\vec{Y}(s)\right)\right|.\nonumber 
\end{align}
Hence
\begin{equation}
\left|x-y\right|\lesssim\left|x+\vec{Y}(t)-\left(y+\vec{Y}(s)\right)\right|
\end{equation}
provided
\begin{equation}
\left|x+\vec{Y}(t)-\left(y+\vec{Y}(s)\right)\right|\geq t+s.
\end{equation}
Therefore,
\begin{equation}
\left|K\left(x+\vec{Y}(t),y+\vec{Y}(s),t+s\right)\right|_{L_{s}^{\infty}}\lesssim\frac{1}{\left|x-y\right|}.\label{eq:SsecondKe}
\end{equation}
By estimates \eqref{eq:SfirstKe} and \eqref{eq:SsecondKe}, we conclude
that
\begin{equation}
\left\Vert \left|T^{S}\left(T^{S}\right)^{*}F\right|\left(x,\cdot\right)\right\Vert _{L_{t}^{\infty}}\lesssim\int\frac{1}{\left|x-y\right|}\left\Vert F(y,\cdot)\right\Vert _{L_{t}^{1}}\,dy
\end{equation}
and
\begin{equation}
\left\Vert T^{S}\left(T^{S}\right)^{*}F\right\Vert _{L_{x}^{6,2}L_{t}^{\infty}}\lesssim\left\Vert F(y,\cdot)\right\Vert _{L_{x}^{\frac{6}{5},2}L_{t}^{1}}.\label{eq:local3}
\end{equation}
Therefore,
\begin{equation}
u_{1}(x,t):=Tf=\frac{\sin\left(t\sqrt{-\Delta}\right)}{\sqrt{-\Delta}}f
\end{equation}
satisfies
\begin{equation}
\left\Vert u_{1}^{S}(x,t)\right\Vert _{L_{x}^{6,2}L_{t}^{\infty}}=\left\Vert u_{1}(x+\vec{Y}(t),t)\right\Vert _{L_{x}^{6,2}L_{t}^{\infty}}\lesssim\left\Vert f\right\Vert _{L^{2}}.\label{eq:local1}
\end{equation}
By a similar argument, we have that
\begin{equation}
u_{2}(x,t)=\cos\left(t\sqrt{-\Delta}\right)g
\end{equation}
satisfies
\begin{equation}
\left\Vert u_{2}^{S}(x,t)\right\Vert _{L_{x}^{6,2}L_{t}^{\infty}}=\left\Vert u_{2}(x+\vec{v}(t),t)\right\Vert _{L_{x}^{6,2}L_{t}^{\infty}}\lesssim\left\Vert g\right\Vert _{\dot{H}^{1}}.\label{eq:local2}
\end{equation}
For the inhomogeneous case, we again consider
\begin{equation}
D(x,t)=\int_{0}^{t}\frac{\sin\left((t-s)\sqrt{-\Delta}\right)}{\sqrt{-\Delta}}F(s)\,ds.
\end{equation}
For the standard case, as above,
\begin{eqnarray}
\left\Vert \int_{0}^{t}\frac{\sin\left((t-s)\sqrt{-\Delta}\right)}{\sqrt{-\Delta}}F(s)\,ds\right\Vert _{L_{t}^{\infty}} & \lesssim & \int\frac{1}{\left|x-y\right|}\left\Vert F\left(y,\cdot\right)\right\Vert _{L_{t}^{2}}dy,
\end{eqnarray}
so
\begin{equation}
\left\Vert \int_{0}^{t}\frac{\sin\left((t-s)\sqrt{-\Delta}\right)}{\sqrt{-\Delta}}F(s)\,ds\right\Vert _{L_{x}^{6,2}L_{t}^{\infty}}\lesssim\left\Vert F\right\Vert _{L_{x}^{\frac{6}{5},2}L_{t}^{\infty}}.
\end{equation}
Two sum three pieces up, we conclude that
\begin{equation}
\left\Vert u\right\Vert _{L_{x}^{6,2}L_{t}^{\infty}}\lesssim\|f\|_{L^{2}}+\|g\|_{\dot{H}^{1}}+\left\Vert F\right\Vert _{L_{x}^{\frac{6}{5},2}L_{t}^{\infty}}.
\end{equation}
For \eqref{eq:localrevSS-2}, it follows from estimates \eqref{eq:local1},
\eqref{eq:local2} and \eqref{eq:local3} with the same argument as \eqref{eq:freeesti11}.
Therefore,
\[
\left\Vert u^{S}(x,t)\right\Vert _{L_{x}^{6,2}L_{t}^{\infty}}\lesssim\|f\|_{L^{2}}+\|g\|_{\dot{H}^{1}}+\left\Vert F\right\Vert _{\dot{W}_{x}^{1,\frac{6}{5}}L_{t}^{1}}
\]
as claimed.
\end{proof}
\begin{rem}
From the embedding of Lorentz spaces, from
\begin{equation}
\left\Vert u\right\Vert _{L_{x}^{6,2}L_{t}^{\infty}}\lesssim\|f\|_{L^{2}}+\|g\|_{\dot{H}^{1}}+\left\Vert F\right\Vert _{L_{x}^{\frac{6}{5},2}L_{t}^{\infty}}
\end{equation}
one has
\begin{equation}
\left\Vert u\right\Vert _{L_{x}^{6}L_{t}^{\infty}}\lesssim\|f\|_{L^{2}}+\|g\|_{\dot{H}^{1}}+\left\Vert F\right\Vert _{L_{x}^{\frac{6}{5}}L_{t}^{\infty}},
\end{equation}
similarly,
\begin{equation}
\left\Vert u^{S}(x,t)\right\Vert _{L_{x}^{6}L_{t}^{\infty}}\lesssim\|f\|_{L^{2}}+\|g\|_{\dot{H}^{1}}+\left\Vert \nabla F\right\Vert _{L_{x}^{\frac{6}{5}}L_{t}^{1}}.
\end{equation}
\end{rem}

\subsection{Perturbed wave equations.}

Finally, we extend all of our estimates to the perturbed Hamiltonian.
In \cite{GC2}, we relied on Duhamel expansion of the perturbed evolution,
the estimates along trajectories for free ones and the standard estimates
for the perturbed ones. Here we present an alternative approach based
on the structure formula of the wave operators as in Section \ref{sec:Prelim}.
We only present the standard cases in Theorem \ref{thm:reversedP}
and other estimates can be obtained similarly. 

In this section, we suppose
\begin{equation}
H=-\Delta+V
\end{equation}
satisfies Definition \ref{def:potenialWO} and set
\begin{equation}
u(x,t)=\frac{\sin\left(t\sqrt{H}\right)}{\sqrt{H}}P_{c}f+\cos\left(t\sqrt{H}\right)P_{c}g+\int_{0}^{t}\frac{\sin\left(\left(t-s\right)\sqrt{H}\right)}{\sqrt{H}}P_{c}F(s)\,ds
\end{equation}
with
\begin{equation}
u^{S}(x,t):=u\left(x+\vec{Y}(t),t\right),
\end{equation}
where $P_{c}$ is the projection onto the continuous spectrum of $H$.
\begin{proof}[Proof of Theorem \ref{thm:reversedP}]
It suffices to consider
\begin{equation}
\frac{\sin\left(t\sqrt{H}\right)}{\sqrt{H}}P_{c}f.
\end{equation}
By construction,
\begin{equation}
\frac{\sin\left(t\sqrt{H}\right)}{\sqrt{H}}P_{c}f=W^{+}\frac{\sin\left(t\sqrt{-\Delta}\right)}{\sqrt{-\Delta}}\left(W^{+}\right)^{*}P_{c}f.
\end{equation}
Denoting
\begin{equation}
h=\left(W^{+}\right)^{*}P_{c}f,
\end{equation}
we have
\begin{equation}
\|P_{c}f\|_{L^{2}}\simeq\|h\|_{L^{2}}.
\end{equation}
Setting
\begin{equation}
G=\frac{\sin\left(t\sqrt{-\Delta}\right)}{\sqrt{-\Delta}}h,
\end{equation}
by Theorem \ref{thm:structure}, it is sufficient to consider the
boundedness of
\begin{equation}
G+\int_{\mathbb{S}^{2}}\int_{\mathbb{R}^{3}}g(x,y,\tau)G\left(S_{\tau}x+y\right)\,dyd\tau.
\end{equation}
Clearly, by the endpoint reversed Strichartz estimate for the free
case,
\begin{equation}
\|G\|_{L_{x}^{\infty}L_{t}^{2}}\lesssim\|h\|_{L^{2}}\simeq\|P_{c}f\|_{L^{2}}.
\end{equation}
Next, by Minkowski's inequality,
\begin{eqnarray}
\left\Vert \int_{\mathbb{S}^{2}}\int_{\mathbb{R}^{3}}g(x,y,\tau)G\left(S_{\tau}x+y\right)\,dyd\tau\right\Vert {}_{L_{x}^{\infty}L_{t}^{2}}\\
\lesssim\int_{\mathbb{S}^{2}}\int_{\mathbb{R}^{3}}\left\Vert g(x,y,\tau)G\left(S_{\tau}x+y\right)\right\Vert {}_{L_{x}^{\infty}L_{t}^{2}}dyd\tau\nonumber 
\end{eqnarray}
\begin{equation}
\left\Vert g(x,y,\tau)G\left(S_{\tau}x+y\right)\right\Vert {}_{L_{x}^{\infty}L_{t}^{2}}\lesssim\left\Vert g(x,y,\tau)\right\Vert _{L_{x}^{\infty}}\left\Vert G\left(S_{\tau}x+y\right)\right\Vert {}_{L_{x}^{\infty}L_{t}^{2}}.
\end{equation}
Since reflections with respect to a fixed plane and translations commute
with the solution of a free wave equation, we obtain
\begin{equation}
G\left(S_{\tau}x+y\right)=\frac{\sin\left(t\sqrt{-\Delta}\right)}{\sqrt{-\Delta}}h\left(S_{\tau}x+y\right).
\end{equation}
Therefore,
\begin{equation}
\left\Vert G\left(S_{\tau}x+y\right)\right\Vert {}_{L_{x}^{\infty}L_{t}^{2}}\lesssim\|h\left(S_{\tau}x+y\right)\|_{L^{2}}\lesssim\|h\|_{L^{2}}\simeq\|P_{c}f\|_{L^{2}}.
\end{equation}
It follows
\begin{eqnarray}
\left\Vert G+\int_{\mathbb{S}^{2}}\int_{\mathbb{R}^{3}}g(x,y,\tau)G\left(S_{\tau}x+y\right)\,dyd\tau\right\Vert {}_{L_{x}^{\infty}L_{t}^{2}}\\
\lesssim\left(1+\int_{\mathbb{S}^{2}}\int_{\mathbb{R}^{3}}\left\Vert g(x,y,\tau)\right\Vert _{L_{x}^{\infty}}dyd\tau\right)\|P_{c}f\|_{L^{2}}\lesssim\|f\|_{L^{2}}.\nonumber 
\end{eqnarray}
Then we conclude
\begin{equation}
\left\Vert \frac{\sin\left(t\sqrt{H}\right)}{\sqrt{H}}P_{c}f\right\Vert _{_{L_{x}^{\infty}L_{t}^{2}}}\lesssim\|f\|_{L^{2}},
\end{equation}
as claimed.
\end{proof}
For the other endpoint revered type estimate, Theorem \ref{thm:reveredLocalP}
follows in the same manner. 

\subsection{Wave equations with moving potentials}

Finally in this section, we consider the wave equation
\begin{equation}
\partial_{tt}u-\Delta u+V\left(x-\vec{\mu}t\right)u=0
\end{equation}
\begin{equation}
u(x,0)=g(x),\,u_{t}(x,0)=f(x)
\end{equation}
where the potential satisfies Definition \ref{def:potenialWO-1}.
Again without of loss of generality, we assume $\vec{\mu}$ is along
$\vec{e}_{1}$ and $\vec{\mu}<1$. Recall that associated to this
model, we define
\begin{equation}
H=-\Delta+V\left(\sqrt{1-\left|\vec{\mu}\right|^{2}}x_{1},x_{2},x_{3}\right).
\end{equation}

Let $m_{1},\,\ldots,\,m_{w}$ be the normalized bound states of $H$
associated to the negative eigenvalues $-\lambda_{1}^{2},\,\ldots,\,-\lambda_{w}^{2}$
respectively (notice that by our assumptions, $0$ is not an eigenvalue).
We denote by $P_{b}$ the projections on the the bound states of $H$
, respectively, and let $P_{c}=Id-P_{b}$. 

Performing a Lorentz transformation $L$ with respect to the moving
frame $\left(x-\vec{\mu}t,t\right)$, we have
\begin{equation}
\partial_{t't'}u_{L}+Hu_{L}=0,
\end{equation}
\begin{equation}
u_{L}(x',0)=\tilde{g}(x'),\,\left(u_{L}\right)_{t}(x',0)=\tilde{f}(x')
\end{equation}
and
\begin{equation}
\|f\|_{L^{2}}+\|g\|_{\dot{H}^{1}}\simeq\|\tilde{f}\|_{L^{2}}+\|\tilde{g}\|_{\dot{H}^{1}}.
\end{equation}
We can write
\begin{equation}
u_{L}\left(x',t'\right)=\sum_{i=1}^{w}a_{i}(t')m_{i}(x')+r_{L}\left(x',t'\right),
\end{equation}
such that
\begin{equation}
P_{c}r_{L}=r_{L}.
\end{equation}
Return to our original coordinate, we have a decomposition for $u$
that
\begin{equation}
u(x,t)=\sum_{i=1}^{w}a_{i}\left(\gamma(t-vx_{1})\right)\left(m_{i}\right)_{\mu}\left(x,t\right)+r\left(x,t\right)\label{eq:decomp}
\end{equation}
where
\begin{equation}
\left(m_{i}\right)_{\mu}(x,t)=m_{i}\left(\gamma\left(x_{1}-\mu t\right),x_{2},x_{3}\right).
\end{equation}
\begin{cor}
\label{cor:endmove}Let $\vec{Y}(t)\in\mathbb{R}^{3}$ be an admissible
trajectory. With the notations from above, we have 
\begin{equation}
\left\Vert r^{S}\right\Vert _{L_{x}^{\infty}L_{t}^{2}}\lesssim\|f\|_{L^{2}}+\|g\|_{\dot{H}^{1}},
\end{equation}
in particular,
\begin{equation}
\int_{0}^{\infty}\int_{\mathbb{R}^{3}}\frac{1}{\left\langle x-\vec{Y}(t)\right\rangle ^{\alpha}}r^{2}(x,t)\,dxdt\lesssim\|f\|_{L^{2}}+\|g\|_{\dot{H}^{1}}.
\end{equation}
\end{cor}

\begin{proof}
Notice that if $\vec{Y}(t)$ is an admissible trajectory in our original
frame $\left(x,t\right)$, then if we perform a Lorentz transformation
$L(\vec{\mu})$, in the new frame, the trajectory $\vec{Y}(t)$ can
be written as $\vec{Z}(t')$ with $\left|\vec{Z}'(t')\right|<\phi\left(\lambda,\vec{\ell}\right)<1.$
In other words, in the new coordinate, the trajectory is still admissible.
Then for fixed $x\in\mathbb{R}^{3}$,
\begin{equation}
\int\left|r^{S}(x,t)\right|^{2}dt\lesssim\sup_{x'\in\mathbb{R}^{3}}\int\left|r_{L}^{S'}(x',t')\right|^{2}dt',
\end{equation}
where
\begin{equation}
r_{L}^{S'}\left(x',t'\right)=r_{L}\left(x'+\vec{Z}(t'),t'\right).
\end{equation}
By construction and Theorem \ref{thm:generalC},
\begin{equation}
\sup_{x'\in\mathbb{R}^{3}}\int\left|r_{L}^{S'}(x',t')\right|^{2}dt'\lesssim\left(\|\tilde{f}\|_{L^{2}}+\|\tilde{g}\|_{\dot{H}^{1}}\right)^{2}\simeq\left(\|f\|_{L^{2}}+\|g\|_{\dot{H}^{1}}\right)^{2}
\end{equation}
and hence
\begin{equation}
\left\Vert r^{S}\right\Vert _{L_{x}^{\infty}L_{t}^{2}}\lesssim\|f\|_{L^{2}}+\|g\|_{\dot{H}^{1}}.
\end{equation}
The claim is proved.
\end{proof}

\section{Strichartz Estimates and Energy Estimates\label{sec:one}}

In this section, we establish Strichartz estimates and energy estimates
for scattering states to the wave equation
\begin{equation}
\partial_{tt}u-\Delta u+V\left(x-\vec{Y}(t)\right)u=0,
\end{equation}
\[
u(x,0)=g(x),\,u_{t}(x,0)=f(x)
\]
with
\begin{equation}
\left|\vec{Y}(t)-\vec{\mu}t\right|\lesssim\left\langle t\right\rangle ^{-\beta},\,\beta>1,\,\left|\vec{\mu}\right|<1.
\end{equation}
To simplify the problem, we assume
\begin{equation}
H=-\Delta+V\left(\sqrt{1-\left|\vec{\mu}\right|^{2}}x_{1},x_{2},x_{3}\right)
\end{equation}
only has one bound state $m$ such that
\begin{equation}
Hm=-\lambda^{2}m,\,\lambda>0.
\end{equation}
One can observe that our arguments work for the general case.

We start with reversed Strichartz estimates, Theorem \ref{thm:EndRStri}.
\begin{proof}[Proof of  Theorem \ref{thm:EndRStri}]
First of all, we need to understand the evolution of bound states.
Writing the equation as
\begin{equation}
\partial_{tt}u-\Delta u+V\left(x-\vec{\mu t}\right)u=\left[V\left(x-\vec{\mu}t\right)-V\left(x+\vec{Y}(t)\right)\right]u.\label{eq:Veq}
\end{equation}
Recall that we assume $\vec{\mu}$ is along $x_{1}$. Suppose $u(x,t)$
is a scattering state. As in \eqref{eq:decomp}, we decompose the evolution
as following,
\begin{equation}
u(x,t)=a\left(\gamma(t-\mu x_{1})\right)m_{\mu}\left(x,t\right)+r(x,t)\label{eq:evolution}
\end{equation}
where
\begin{equation}
m_{\mu}(x,t)=m\left(\gamma\left(x_{1}-\mu t\right),x_{2},x_{3}\right)
\end{equation}
and
\begin{equation}
P_{c}\left(H\right)r_{L}=r_{L}.
\end{equation}
Performing the Lorentz transformation $L$ with respect to the moving
frame $\left(x-\vec{\mu}t,t\right)$, we have
\begin{equation}
u_{L}(x',t')=a\left(t'\right)m\left(x'\right)+r_{L}(x',t'),\label{eq:evolutionL}
\end{equation}
and
\begin{equation}
\partial_{t't'}u_{L}+Hu_{L}=-M(x',t')u_{L}\label{eq:eqL}
\end{equation}
where
\begin{equation}
M(x',t')=-\left[V\left(x-\vec{\mu t}\right)-V\left(x+\vec{v}(t)\right)\right]_{L}.
\end{equation}
When $u$ is a scattering state in the sense Definition \ref{AO},
the scattering condition forces $a(t)$ to go $0$.

Plugging the evolution \eqref{eq:evolutionL} into the equation \eqref{eq:eqL}
and taking inner product with $m$, we get
\begin{equation}
\ddot{a}(t')-\lambda^{2}a(t')+a(t')\left\langle Mm,m\right\rangle +\left\langle Mr_{L},m\right\rangle =0
\end{equation}
Notice that
\begin{equation}
\left|M(x',t')\right|\lesssim\frac{1}{\left\langle \gamma\left(t'+\mu x_{1}'\right)\right\rangle ^{\beta}}.
\end{equation}
One can write
\begin{equation}
\ddot{a}(t')-\lambda^{2}a(t')+a(t')c(t')+h(t')=0,\label{eq:aode}
\end{equation}
\begin{equation}
c(t'):=\left\langle Mm,m\right\rangle 
\end{equation}
and
\begin{equation}
h(t'):=\left\langle Mr_{L},m\right\rangle .
\end{equation}
Since $w$ is exponentially localized by Agmon's estimate, we know
\begin{equation}
\left|c(t')\right|\lesssim e^{-b\left|t'\right|},\,b>0.
\end{equation}
The existence of the solution to the ODE \eqref{eq:aode} is clear.
We study the long-time behavior of the solution. Write the equation
as

\begin{equation}
\ddot{a}(t')-\lambda^{2}a(t')=-\left[a(t')c(t')+h(t')\right],
\end{equation}
and denote
\begin{equation}
N(t'):=-\left[a(t')c(t')+h(t')\right].
\end{equation}
Then
\begin{equation}
a(t')=\frac{e^{\lambda t'}}{2}\left[a(0)+\frac{1}{\lambda}\dot{a}(0)+\frac{1}{\lambda}\int_{0}^{t'}e^{-\lambda s}N(s)\,ds\right]+R(t')
\end{equation}
where
\begin{equation}
\left|R(t')\right|\lesssim e^{-ct'},
\end{equation}
for some positive constant $c>0$. Therefore, the stability condition
forces
\begin{equation}
a(0)+\frac{1}{\lambda}\dot{a}(0)+\frac{1}{\lambda}\int_{0}^{\infty}e^{-\lambda s}N(s)\,ds=0.\label{eq:stability}
\end{equation}
Then under the stability condition \eqref{eq:stability},
\begin{equation}
a(t')=e^{-\lambda t'}\left[a(0)+\frac{1}{2\lambda}\int_{0}^{\infty}e^{-\lambda s}N(s)ds\right]+\frac{1}{2\lambda}\int_{0}^{\infty}e^{-\lambda\left|t-s\right|}N(s)\,ds.
\end{equation}
By Young's inequality, to estimate all $L^{p}$ norms of $a(t')$,
it suffices to estimate the $L^{1}$ norm of $h(t')$, see \cite{GC2}. 

By Cauchy-Schwarz and Corollary \ref{cor:endmove},
\[
\int_{0}^{\infty}\left|\left\langle Mr_{L},m\right\rangle \right|dt\lesssim\left\Vert r_{L}\right\Vert _{L_{x'}^{\infty}L_{t'}^{2}}\lesssim\|f\|_{L^{2}}+\|g\|_{\dot{H}^{1}}.
\]
Therefore,
\begin{equation}
\left\Vert a(t)\right\Vert _{L^{p}[0,\infty)}\lesssim_{p}\|f\|_{L^{2}}+\|g\|_{\dot{H}^{1}}.
\end{equation}
Given $\vec{Z}(t)$ an admissible trajectory, set
\begin{equation}
B(x,t)=a\left(\gamma(t-\mu x_{1})\right)m_{\mu}\left(x,t\right),
\end{equation}
\begin{equation}
B^{S}(x,t)=B\left(x+\vec{Z}(t),t\right).
\end{equation}
By Agmon's estimate, see Theorem \ref{thm:Agmon}, and the $L^{1}$
norm estimate for $a(t')$, we have
\begin{equation}
\left\Vert B^{S}(x,t)\right\Vert _{L_{x}^{\infty}L_{t}^{2}[0,\infty)}\lesssim\|f\|_{L^{2}}+\|g\|_{\dot{H}^{1}}.
\end{equation}
By Corollary \ref{cor:endmove}, we also know
\begin{equation}
\left\Vert r^{S}(x,t)\right\Vert _{L_{x}^{\infty}L_{t}^{2}[0,\infty)}\lesssim\|f\|_{L^{2}}+\|g\|_{\dot{H}^{1}}
\end{equation}
Therefore, one has
\begin{equation}
\left\Vert u^{S}(x,t)\right\Vert _{L_{x}^{\infty}L_{t}^{2}[0,\infty)}\lesssim\|f\|_{L^{2}}+\|g\|_{\dot{H}^{1}}.
\end{equation}
We notice that this in particular implies for $\alpha>3$,
\begin{equation}
\int_{0}^{\infty}\int_{\mathbb{R}^{3}}\frac{1}{\left\langle x-\vec{Z}(t)\right\rangle ^{\alpha}}u^{2}(x,t)\,dxdt\lesssim\|f\|_{L^{2}}+\|g\|_{\dot{H}^{1}}.\label{eq:moveweighted}
\end{equation}
The theorem is proved.
\end{proof}
Next, we show Strichartz estimates, Theorem \ref{thm:Stri}, following
\cite{RS,LSch,GC2}. In the following, we use the short-hand notation
\begin{equation}
L_{t}^{p}L_{x}^{q}:=L_{t}^{p}\left([0,\infty),\,L_{x}^{q}\right).
\end{equation}
\begin{proof}[Proof of  Theorem \ref{thm:Stri}]
Following \cite{LSch}, we set $A=\sqrt{-\Delta}$ and notice that
\begin{equation}
\left\Vert Af\right\Vert _{L^{2}}\simeq\left\Vert f\right\Vert _{\dot{H}^{1}},\,\,\forall f\in C^{\infty}\left(\mathbb{R}^{3}\right).\label{eq:nabla}
\end{equation}
For real-valued $u=\left(u_{1},u_{2}\right)\in\mathcal{H}=\dot{H}^{1}\left(\mathbb{R}^{3}\right)\times L^{2}\left(\mathbb{\mathbb{R}}^{3}\right)$,
we write
\begin{equation}
U:=Au_{1}+iu_{2}.
\end{equation}
From \eqref{eq:nabla}, we know
\begin{equation}
\left\Vert U\right\Vert _{L^{2}}\simeq\left\Vert \left(u_{1},u_{2}\right)\right\Vert _{\mathcal{H}}.
\end{equation}
We also notice that $u$ solves the original wave equation if and
only if
\begin{equation}
U:=Au+i\partial_{t}u
\end{equation}
satisfies
\begin{equation}
i\partial_{t}U=AU+V\left(x-\vec{Y}(t)\right)u,
\end{equation}
\begin{equation}
U(0)=Ag+if\in L^{2}\left(\mathbb{R}^{3}\right).
\end{equation}
By Duhamel's formula,
\begin{equation}
U(t)=e^{itA}U(0)-i\int_{0}^{t}e^{-i\left(t-s\right)A}V\left(\cdot-\vec{Y}(s)\right)u(s)\,ds.
\end{equation}
Let $P:=A^{-1}\Re$, then from Strichartz estimates for the free evolution,
\begin{equation}
\left\Vert Pe^{itA}U(0)\right\Vert _{L_{t}^{p}L_{x}^{q}}\lesssim\left\Vert U(0)\right\Vert _{L^{2}}.\label{eq:Sfirst}
\end{equation}
Writing $V=V_{1}V_{2}$ and with the Christ-Kiselev lemma, Lemma \ref{lem:Christ-Kiselev},
it suffices to bound
\begin{equation}
\left\Vert P\int_{0}^{\infty}e^{-i\left(t-s\right)A}V_{1}V_{2}\left(\cdot-\vec{Y}(s)\right)u(s)\,ds\right\Vert _{L_{t}^{p}L_{x}^{q}}.
\end{equation}
We only need to analyze
\[
\left\Vert P\int_{0}^{\infty}e^{-i\left(t-s\right)A}V_{1}V_{2}\left(\cdot-\vec{Y}(s)\right)u(s)\,ds\right\Vert _{L_{t}^{p}L_{x}^{q}}\leq\left\Vert \widetilde{K}\right\Vert _{L_{t,x}^{2}\rightarrow L_{t}^{p}L_{x}^{q}}\left\Vert V_{2}\left(x-\vec{v}(s)\right)u\right\Vert _{L_{t,x}^{2}}
\]
where
\begin{equation}
\left(\widetilde{K}F\right)(t):=P\int_{0}^{\infty}e^{-i\left(t-s\right)A}V_{1}\left(\cdot-\vec{Y}(s)\right)F(s)\,ds.
\end{equation}
To show $\left\Vert \widetilde{K}\right\Vert _{L_{t,x}^{2}\rightarrow L_{t}^{p}L_{x}^{q}}$
is bounded, we test it against $F\in L_{t,x}^{2}$, clearly,
\begin{equation}
\left\Vert \widetilde{K}F\right\Vert _{L_{t}^{p}L_{x}^{q}}\leq\left\Vert Pe^{-itA}\right\Vert _{L^{2}\rightarrow L_{t}^{p}L_{x}^{q}}\left\Vert \int_{0}^{\infty}e^{isA}V_{1}\left(\cdot-\vec{Y}(s)\right)F(s)\,ds\right\Vert _{L^{2}}.\label{eq:TKF}
\end{equation}
The first factor on the right-hand side of \eqref{eq:TKF} is bounded
by Strichartz estimates for the free evolution. Consider the second
factor, by duality, it is sufficient to show
\begin{equation}
\left\Vert V_{1}\left(\cdot-\vec{Y}(t)\right)e^{-itA}\phi\right\Vert _{L_{t,x}^{2}}\lesssim\left\Vert \phi\right\Vert _{L^{2}},\,\forall\phi\in L^{2}\left(\mathbb{R}^{3}\right).
\end{equation}
By our assumption,
\[
\left|\vec{Y}(t)-\vec{\mu}t\right|\lesssim\left\langle t\right\rangle ^{-\beta},\,\beta>1,\,\left|\vec{\mu}\right|<1.
\]
Therefore, it reduces to show
\begin{equation}
\left\Vert \left(1+\left|x-\vec{\mu}t\right|\right)^{-\frac{1}{2}-\epsilon}e^{-itA}\phi\right\Vert _{L_{t,x}^{2}}\lesssim\left\Vert \phi\right\Vert _{L^{2}},\,\forall\phi\in L^{2}\left(\mathbb{R}^{3}\right).\label{eq:DAFT}
\end{equation}
Notice that this is a consequence of that the energy of the free wave
equation stays comparable under Lorentz transformations, Theorem \ref{thm:generalC}.
To show estimate \eqref{eq:DAFT}, one can apply the Lorentz transformation
$L$. In the new frame $\left(x',t'\right)$, then we can use the
standard local energy decay for free wave equations, estimate \eqref{eq:fullwave}
in Appendix B. Finally after applying an inverse transformation back
to the original frame, we obtain \eqref{eq:DAFT}.

From estimate \eqref{eq:DAFT}, one does have
\begin{equation}
\left\Vert V_{1}\left(\cdot-\vec{Y}(t)\right)e^{-itA}\phi\right\Vert _{L_{t,x}^{2}}\lesssim\left\Vert \phi\right\Vert _{L^{2}}.
\end{equation}
Therefore, indeed,
\begin{equation}
\left\Vert \widetilde{K}\right\Vert _{L_{t,x}^{2}\rightarrow L_{t}^{p}L_{x}^{q}}\leq C.
\end{equation}
Hence
\begin{equation}
\left\Vert P\int_{0}^{\infty}e^{-i\left(t-s\right)A}V_{1}V_{2}\left(\cdot-\vec{Y}(s)\right)u(s)\,ds\right\Vert _{L_{t}^{p}L_{x}^{q}}\lesssim\left\Vert V_{2}\left(x-\vec{Y}(s)\right)u\right\Vert _{L_{t,x}^{2}}.
\end{equation}
By our estimate \eqref{eq:moveweighted},
\begin{equation}
\left\Vert V_{2}\left(x-\vec{Y}(s)\right)u\right\Vert _{L_{t,x}^{2}}\lesssim\left(\int_{\mathbb{R}^{+}}\int_{\mathbb{R}^{3}}\frac{1}{\left\langle x-\vec{Y}(t)\right\rangle ^{\alpha}}\left|u(x,t)\right|^{2}dxdt\right)^{\frac{1}{2}}\lesssim\|f\|_{L^{2}}+\|g\|_{\dot{H}^{1}}.
\end{equation}
Therefore,
\begin{equation}
\left\Vert P\int_{0}^{\infty}e^{-i\left(t-s\right)A}V_{1}V_{2}\left(\cdot-\vec{Y}(s)\right)u(s)\,ds\right\Vert _{L_{t}^{p}L_{x}^{q}}\lesssim\|f\|_{L^{2}}+\|g\|_{\dot{H}^{1}}.
\end{equation}
Hence one can conclude
\begin{equation}
\left\Vert u\right\Vert _{L_{t}^{p}L_{x}^{q}}\lesssim\|f\|_{L^{2}}+\|g\|_{\dot{H}^{1}},
\end{equation}
as we claimed.
\end{proof}
The energy estimates in Theorem \ref{thm:Energy} can be established
in a similar manner.
\begin{proof}[Proof of Theorem \ref{thm:Energy}]
Again, we set $A=\sqrt{-\Delta}$ and notice that
\begin{equation}
\left\Vert Af\right\Vert _{L^{2}}\simeq\left\Vert f\right\Vert _{\dot{H}^{1}},\,\,\forall f\in C^{\infty}\left(\mathbb{R}^{3}\right).
\end{equation}
For real-valued $u=\left(u_{1},u_{2}\right)\in\mathcal{H}=\dot{H}^{1}\left(\mathbb{R}^{3}\right)\times L^{2}\left(\mathbb{\mathbb{R}}^{3}\right)$,
we write
\begin{equation}
U:=Au_{1}+iu_{2}.
\end{equation}
We also notice that $u$ solves the original equation if and only
if
\begin{equation}
U:=Au+i\partial_{t}u
\end{equation}
satisfies
\begin{equation}
i\partial_{t}U=AU+V\left(x-\vec{v}(t)\right)u,
\end{equation}
\begin{equation}
U(0)=Ag+if\in L^{2}\left(\mathbb{R}^{3}\right).
\end{equation}
By Duhamel's formula,
\begin{equation}
U(t)=e^{itA}U(0)-i\int_{0}^{t}e^{-i\left(t-s\right)A}\left(V\left(\cdot-\vec{Y}(s)\right)u(s)\right)\,ds.
\end{equation}
 From the energy estimate for the free evolution,
\begin{equation}
\sup_{t\in\mathbb{R}}\left\Vert e^{itA}U(0)\right\Vert _{L_{x}^{2}}\lesssim\left\Vert U(0)\right\Vert _{L^{2}}.\label{eq:Sfirst-1-3}
\end{equation}
Writing $V=V_{1}V_{2}$, it suffices to bound
\begin{equation}
\sup_{t\in\mathbb{R}}\left\Vert \int_{0}^{\infty}e^{-i\left(t-s\right)A}V_{1}V_{2}\left(\cdot-\vec{Y}(s)\right)u(s)\,ds\right\Vert _{L_{x}^{2}}.
\end{equation}
This is can be handled in a same manner as Theorem \ref{thm:Stri}.

It is clear that
\begin{equation}
\left\Vert \int_{0}^{\infty}e^{-i\left(t-s\right)A}V_{1}V_{2}\left(\cdot-\vec{Y}(s)\right)u(s)\,ds\right\Vert _{L_{t}^{\infty}L_{x}^{2}}\leq\left\Vert \widetilde{K}\right\Vert _{L_{t}^{2}L_{x}^{2}\rightarrow L_{t}^{\infty}L_{x}^{2}}\left\Vert V_{2}\left(x-\vec{Y}(t)\right)u\right\Vert _{L_{t}^{2}L_{x}^{2}},
\end{equation}
where
\begin{equation}
\left(\widetilde{K}F\right)(t):=\int_{0}^{\infty}e^{-i\left(t-s\right)A}V_{1}\left(\cdot-\vec{Y}(s)\right)F(s)\,ds.
\end{equation}
We need to estimate
\begin{equation}
\left\Vert \widetilde{K}\right\Vert _{L_{t}^{2}L_{x}^{2}\rightarrow L_{t}^{\infty}L_{x}^{2}}.
\end{equation}
Testing against $F\in L_{t}^{2}L_{x}^{2}$, clearly,
\begin{equation}
\left\Vert \widetilde{K}F\right\Vert _{L_{t}^{\infty}L_{x}^{2}}\leq\left\Vert e^{-itA}\right\Vert _{L^{2}\rightarrow L_{t}^{\infty}L_{x}^{2}}\left\Vert \int_{0}^{\infty}e^{isA}V_{1}\left(\cdot-\vec{Y}(s)\right)F(s)\,ds\right\Vert _{L^{2}}.\label{eq:TKF-1-3}
\end{equation}
The first factors on the right-hand side of \eqref{eq:TKF-1-3} is bounded
by the energy estimates for the free evolution. And the the remaining
steps are exactly the same as the proof above.

Therefore,we have
\begin{equation}
\sup_{t\geq0}\left(\|\nabla u(t)\|_{L^{2}}+\|u_{t}(t)\|_{L^{2}}\right)\lesssim\|f\|_{L^{2}}+\|g\|_{\dot{H}^{1}}
\end{equation}
as claimed.
\end{proof}
To finish this section, we show one important application of Theorem
\ref{thm:EndRStri} to establish the boundedness of the following
energy
\begin{equation}
E_{V}(t)=\int_{\mathbb{R}^{3}}\left|\nabla_{x}u\right|^{2}+\left|\partial_{t}u\right|^{2}+V\left(x-\vec{Y}(t)\right)\left|u\right|^{2}dx\label{eq:eneOM}
\end{equation}
as Corollary \ref{cor:ene}.
\begin{proof}[Corollary \ref{cor:ene}]
We might assume $u$ is smooth. Taking
the time derivative of $E_{V}(t)$ and by the fact that $u$ solves
equation, we obtain

\begin{equation}
\partial_{t}E_{V}(t)=\int_{\mathbb{R}^{3}}\partial_{t}V(x-\vec{Y}(t))\left|u(x,t)\right|^{2}dx=-\int_{\mathbb{R}^{3}}\vec{Y}'\left(t\right)\cdot\nabla V(y)\left|u^{S}(y,t)\right|^{2}dy.
\end{equation}
by a simple change of variable.

Note that
\begin{eqnarray}
\int_{0}^{\infty}\left|\partial_{t}E_{V}(t)\right|dt & \lesssim & \int_{0}^{\infty}\int_{\mathbb{R}^{3}}\left|\partial_{y}V(y)\right|\left|u^{S}(y)\right|^{2}dydt,\nonumber \\
 & \lesssim & \left\Vert \partial_{x}V\right\Vert _{L_{x}^{1}}\left\Vert u^{S}\right\Vert _{L_{x}^{\infty}L_{t}^{2}}^{2}\\
 & \lesssim & \left\Vert \left(g,f\right)\right\Vert _{\dot{H}^{1}\times L^{2}}^{2}\nonumber 
\end{eqnarray}
where in the last inequality, we applied Theorem \ref{thm:EndRStri}.

Therefore, for arbitrary $t\in\mathbb{R}^{+}$, we have
\begin{equation}
\left|E_{V}(t)-E_{V}(0)\right|\leq\int_{0}^{\infty}\left|\partial_{t}E_{V}(t)\right|dt\lesssim\left\Vert \left(g,f\right)\right\Vert _{\dot{H}^{1}\times L^{2}}^{2}
\end{equation}
which implies
\begin{equation}
\sup_{t}\left|E_{V}(t)\right|\lesssim\left\Vert \left(g,f\right)\right\Vert _{\dot{H}^{1}\times L^{2}}^{2}.
\end{equation}
We are done.
\end{proof}
With endpoint Strichartz estimates along smooth trajectories, we can
also derive inhomogenenous Strichartz estimates. One can find a detailed
argument in \cite{GC2}. 

\section{Scattering and Asymptotic Completeness\label{sec:Scattering}}

In this section, we show some applications of the results in this
paper. We will study the long-time behaviors for a scattering state
in the sense of Definition \ref{AO}. 

Following the notations from section above, we will still use the
short-hand notation
\begin{equation}
L_{t}^{p}L_{x}^{q}:=L_{t}^{p}\left([0,\infty),\,L_{x}^{q}\right).
\end{equation}
We reformulate the wave equation as a Hamiltonian system,
\begin{equation}
U'=JE'(U)
\end{equation}
where $J$ is a skew symmetric matrix and $E'(U)$ is the Frechet
derivative of the conserved quantity. Setting
\begin{equation}
U:=\left(\begin{array}{c}
u\\
\partial_{t}u
\end{array}\right),\,J:=\left(\begin{array}{cc}
0 & 1\\
-1 & 0
\end{array}\right),\,H_{F}:=\left(\begin{array}{cc}
-\Delta & 0\\
0 & 1
\end{array}\right),\label{eq:bigU}
\end{equation}
we can rewrite the free wave equation as
\begin{equation}
\dot{U}_{0}-JH_{F}U_{0}=0,
\end{equation}
with initial data
\begin{equation}
U_{0}[0]=\left(\begin{array}{c}
g_{0}\\
f_{0}
\end{array}\right).
\end{equation}
The solution of the free wave equation is given by
\begin{equation}
U_{0}=e^{tJH_{F}}U_{0}[0].
\end{equation}
\begin{thm}
\label{thm:scattering}Suppose $u$ is a scattering state in the sense
of Definition of \ref{AO} which solves
\begin{equation}
\partial_{tt}u-\Delta u+V(x-\vec{Y}(t))u=0\label{eq:scateq}
\end{equation}
with initial data
\begin{equation}
u(x,0)=g(x),\,u_{t}(x,0)=f(x).
\end{equation}
Write
\begin{equation}
U=\left(u,u_{t}\right)^{t}\in C^{0}\left([0,\infty);\,\dot{H}^{1}\right)\times C^{0}\left([0,\infty);\,L^{2}\right),
\end{equation}
with initial data \textup{$U[0]=\left(g,f\right)^{t}\in\dot{H}^{1}\times L^{2}$.
Then there exist free data
\[
U_{0}[0]=\left(g_{0},f_{0}\right)^{t}\in\dot{H}^{1}\times L^{2}
\]
 such that
\begin{equation}
\left\Vert U[t]-e^{tJH_{F}}U_{0}[0]\right\Vert _{\dot{H}^{1}\times L^{2}}\rightarrow0
\end{equation}
as $t\rightarrow\infty$.}
\end{thm}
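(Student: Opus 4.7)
The plan is to carry out a Cook-type argument using the Duhamel formula, directly parallel to the proofs of Theorems \ref{thm:StriOneM} and \ref{thm:EnergyOne}. Set $A = \sqrt{-\Delta}$ and $U := Au + i\partial_t u$, so that $\|U(t)\|_{L^2_x}$ is equivalent to the full energy $\|(u(t),\partial_t u(t))\|_{\dot H^1\times L^2}$. Equation \eqref{eq:scateq} becomes $i\partial_t U = AU + V(\cdot-\vec{v}(t))u$ with $U(0) = Ag + if$, and Duhamel gives
\begin{equation}
e^{itA}U(t) = U(0) - i\int_0^t e^{isA}V(\cdot-\vec{v}(s))u(s)\,ds.
\end{equation}
The natural candidate asymptotic profile is
\begin{equation}
U_0 := U(0) - i\int_0^\infty e^{isA}V(\cdot-\vec{v}(s))u(s)\,ds,
\end{equation}
provided this improper integral converges in $L^2_x$; then $\|U(t)-e^{-itA}U_0\|_{L^2} = \|\int_t^\infty e^{isA}V(\cdot-\vec{v}(s))u(s)\,ds\|_{L^2}$, and recovering $(g_0,f_0)\in\dot H^1\times L^2$ from $U_0 = Ag_0 + if_0$ translates this $L^2$ convergence into the desired convergence in $\dot H^1\times L^2$.

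Both the convergence of the improper integral and the vanishing of the tail reduce, by duality, to controlling
\begin{equation}
\sup_{\|\Phi\|_{L^2}=1}\left|\int_{t_1}^{t_2}\bigl\langle V(\cdot-\vec{v}(s))u(s),\,e^{-isA}\Phi\bigr\rangle_{L^2_x}\,ds\right|.
\end{equation}
I would use the Kato splitting $V = V_1 V_2$ with $|V_j|\lesssim\langle x\rangle^{-\alpha/2}$ and apply Cauchy--Schwarz in $(s,x)$ to dominate this by
\begin{equation}
\|V_2(\cdot-\vec{v}(s))u(s)\|_{L^2_{s,x}([t_1,t_2]\times\mathbb{R}^3)}\cdot\bigl\|V_1(\cdot-\vec{v}(s))e^{-isA}\Phi\bigr\|_{L^2_{s,x}(\mathbb{R}\times\mathbb{R}^3)}.
\end{equation}
The second factor is controlled by $C\|\Phi\|_{L^2}$ via precisely the free-frame local energy decay estimate \eqref{eq:DAFT} from the proof of Theorem \ref{thm:StriOneM}: one applies a Lorentz transformation in the $\vec{\mu}$ direction, absorbs the remainder $|\vec{v}(s)-\vec{\mu}s|\lesssim\langle s\rangle^{-\beta}$, and invokes the standard local energy decay for the free wave equation. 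The first factor is globally finite on $\mathbb{R}^+$: the change of variable $y = x-\vec{v}(s)$ together with the reversed Strichartz estimate along the trajectory (Theorem \ref{thm:revemoving}) yields
\begin{equation}
\|V_2(\cdot-\vec{v}(s))u\|_{L^2_{s,x}(\mathbb{R}^+\times\mathbb{R}^3)}^2 = \int|V_2(y)|^2\int_0^\infty|u(y+\vec{v}(s),s)|^2\,ds\,dy\lesssim\|V_2\|_{L^2}^2\bigl(\|f\|_{L^2}+\|g\|_{\dot H^1}\bigr)^2,
\end{equation}
so dominated convergence gives $\|V_2(\cdot-\vec{v}(s))u(s)\|_{L^2_{s,x}([t_1,t_2]\times\mathbb{R}^3)}\to 0$ as $t_1\to\infty$.

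Combining these two inputs yields a Cauchy criterion in $L^2_x$, so the improper integral defining $U_0$ converges and $\|U(t)-e^{-itA}U_0\|_{L^2}\to 0$ as $t\to\infty$, which is exactly the asserted convergence in $\dot H^1\times L^2$. The main obstacle is really the bilinear smoothing bound $\|V_1(\cdot-\vec{v}(s))e^{-isA}\Phi\|_{L^2_{s,x}}\lesssim\|\Phi\|_{L^2}$ for a potential traveling along the time-dependent curve $\vec{v}(s)$, but this has already been resolved in Section \ref{sec:one} via Lorentz transformations and the energy comparison of Theorem \ref{thm:generalC}, so the scattering step itself introduces no new analytic difficulty beyond what was needed for the Strichartz and energy estimates.
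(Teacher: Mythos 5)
Your proposal is correct and follows essentially the same route as the paper: Cook's method via the Duhamel formula for $U=Au+i\partial_t u$, reduction to the $L^2_x$-convergence of $\int_0^\infty e^{isA}V(\cdot-\vec v(s))u(s)\,ds$, and control of this integral by splitting $V=V_1V_2$ and combining the traveling local energy decay bound (established in Theorem~\ref{thm:StriOneM}/\ref{thm:EnergyOne} via Lorentz transformation and Theorem~\ref{thm:generalC}) with the reversed Strichartz estimate along the trajectory (Theorem~\ref{thm:revemoving}). The only cosmetic difference is that you spell out the duality plus Cauchy--Schwarz plus Cauchy-criterion step explicitly, whereas the paper packages the same content as the boundedness of the operator $K:L^2_{t,x}\to L^2_x$; your version is if anything a bit more careful about why the tail $\int_T^\infty$ actually tends to zero.
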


\begin{proof}
We will still use the formulation in Theorem \ref{thm:Stri}. We set
$A=\sqrt{-\Delta}$ and notice that
\begin{equation}
\left\Vert Af\right\Vert _{L^{2}}\simeq\left\Vert f\right\Vert _{\dot{H}^{1}},\,\,\forall f\in C^{\infty}\left(\mathbb{R}^{3}\right).
\end{equation}
For real-valued $u=\left(u_{1},u_{2}\right)\in\mathcal{H}=\dot{H}^{1}\left(\mathbb{R}^{3}\right)\times L^{2}\left(\mathbb{\mathbb{R}}^{3}\right)$,
we write
\begin{equation}
U:=Au_{1}+iu_{2}.
\end{equation}
As before, then $U$ solves
\begin{equation}
i\partial_{t}U=AU+V\left(x-\vec{Y}(t)\right)u,
\end{equation}
\begin{equation}
U(0)=Ag+if\in L^{2}\left(\mathbb{R}^{3}\right).
\end{equation}
By Duhamel's formula, for fixed $T$
\begin{equation}
U(T)=e^{iTA}U(0)-i\int_{0}^{T}e^{-i\left(T-s\right)A}\left(V\left(\cdot-\vec{Y}(s)\right)u(s)\right)\,ds.
\end{equation}
Applying the free evolution backwards, we obtain
\begin{equation}
e^{-iTA}U(T)=U(0)-i\int_{0}^{T}e^{isA}\left(V\left(\cdot-\vec{Y}(s)\right)u(s)\right)\,ds.
\end{equation}
Letting $T$ go to $\infty$, we define
\begin{equation}
U_{0}(0):=U(0)-i\int_{0}^{\infty}e^{isA}\left(V\left(\cdot-\vec{Y}(s)\right)u(s)\right)\,ds
\end{equation}
By construction, we just need to show $U_{0}[0]$ is well-defined
in $L^{2}$, then automatically,
\begin{equation}
\left\Vert U(t)-e^{itA}U_{0}(0)\right\Vert _{L^{2}}\rightarrow0.
\end{equation}
 It suffices to show
\begin{equation}
\int_{0}^{\infty}e^{isA}\left(V\left(\cdot-\vec{Y}(s)\right)u(s)\right)\,ds\in L^{2}.
\end{equation}
Then following the argument as in the proof of Theorem \ref{thm:Stri},
we write $V=V_{1}V_{2}$.

We consider
\begin{equation}
\left\Vert \int_{0}^{\infty}e^{isA}V_{1}V_{2}\left(\cdot-\vec{Y}(s)\right)u(s)\,ds\right\Vert _{L_{x}^{2}}\leq\left\Vert K\right\Vert _{L_{t,x}^{2}\rightarrow L_{x}^{2}}\left\Vert V_{2}\left(\cdot-\vec{Y}(s)\right)u\right\Vert _{L_{t,x}^{2}},
\end{equation}
where
\begin{equation}
\left(KF\right)(t):=\int_{0}^{\infty}e^{isA}V_{1}\left(\cdot-\vec{Y}(s)\right)F(s)\,ds.
\end{equation}
By the same argument in the proof of Theorem \ref{thm:Energy}, one
has
\begin{equation}
\left\Vert K\right\Vert _{L_{t,x}^{2}\rightarrow L_{x}^{2}}\leq C.
\end{equation}
Therefore by estimate \eqref{eq:moveweighted},
\begin{equation}
\left\Vert V_{2}\left(x-\vec{Y}(t)\right)u\right\Vert _{L_{t,x}^{2}}\lesssim\left(\int_{\mathbb{R}^{+}}\int_{\mathbb{R}^{3}}\frac{1}{\left\langle x-\vec{Y}(t)\right\rangle ^{\alpha}}\left|u(x,t)\right|^{2}dxdt\right)^{\frac{1}{2}}\lesssim\|f\|_{L^{2}}+\|g\|_{\dot{H}^{1}}.
\end{equation}
Hence
\begin{equation}
\left\Vert \int_{0}^{\infty}e^{isA}\left(V\left(\cdot-\vec{Y}(s)\right)u(s)\right)\,ds\right\Vert _{L^{2}}\lesssim\|f\|_{L^{2}}+\|g\|_{\dot{H}^{1}}.
\end{equation}
So
\begin{equation}
U_{0}(0):=U(0)-i\int_{0}^{\infty}e^{isA}\left(V\left(\cdot-\vec{Y}(s)\right)u(s)\right)\,ds
\end{equation}
is well-defined in $L^{2}$ and
\begin{equation}
\left\Vert U(t)-e^{itA}U_{0}(0)\right\Vert _{L^{2}}\rightarrow0.
\end{equation}
Define
\begin{equation}
\left(g_{0},f_{0}\right):=\left(A^{-1}\Re U_{0}(0),\,\Im U_{0}(0)\right).
\end{equation}
By construction, notice that
\begin{equation}
U[t]=\left(A^{-1}\Re U(t),\,\Im U(t)\right)
\end{equation}
and
\begin{equation}
\left\Vert U[t]-e^{tJH_{F}}U_{0}[0]\right\Vert _{\dot{H}^{1}\times L^{2}}\rightarrow0.
\end{equation}
We are done.
\end{proof}
To finish this section, we show the asymptotic completeness for the
wave equation with the potential moving along a straight line:

\begin{equation}
\partial_{tt}u-\Delta u+V(x-\vec{\mu}t)u=0.
\end{equation}
For the asymptotic behavior of the trajectory, without loss of generality,
we still that assume $\vec{u}$ is along $\vec{e}_{1}$.

Let $m_{1},\,\ldots,\,m_{w}$ be the normalized bound states of
\begin{equation}
H=-\Delta+V\left(\sqrt{1-\left|\mu\right|^2}x_{1},x_{2},x_{3}\right)
\end{equation}
associated with eigenvalues $-\lambda_{1}^{2},\,\ldots,\,-\lambda_{w}^{2}$
respectively with $\lambda_{i}>0,\,i=1,\ldots,w$. Setting
\begin{equation}
A_{H}=\left(\begin{array}{cc}
0 & 1\\
-H & 0
\end{array}\right),
\end{equation}
then the point spectrum of $A_{H}$ is
\begin{equation}
\sigma_{p}=\bigcup_{i=1}^{w}\left\{ \pm\lambda_{i}\right\} 
\end{equation}
and the continuous spectrum is
\begin{equation}
\sigma_{c}=i\left(-\infty,\infty\right).
\end{equation}
Setting
\begin{equation}
E_{i}^{\pm}=\left(\begin{array}{c}
m_{i}\\
\pm\lambda_{i}m_{i}
\end{array}\right),\,i=1,\ldots,w,
\end{equation}
we know $E_{i}^{\pm}$ are eigenvectors of $A_{H}$ with eigenvalues
$\pm\lambda_{i}$. One can define the associated Riesz projection
\begin{equation}
P_{i,\pm}\left(H\right):=\left\langle \cdot,JE_{i}^{\mp}\right\rangle E_{i}^{\pm}\label{eq:rieszpro}
\end{equation}
onto $E_{i}^{\pm}$. One can check
\begin{equation}
P_{i,\pm}\left(H\right)\left(\begin{array}{c}
u\\
\partial_{t}u
\end{array}\right)=\left\langle \pm\lambda_{i}u(t)+\partial_{t}u(t),\,m_{i}\right\rangle .
\end{equation}
From the standard asymptotic completeness results, if we write
\begin{equation}
\dot{U}=A_{H}U,\ U=\left(\begin{array}{c}
u\\
\partial_{t}u
\end{array}\right)\ \text{and}\ U[0]=\left(\begin{array}{c}
g\\
f
\end{array}\right)
\end{equation}
then one can decompose the evolution as
\begin{equation}
U(t)=\sum_{i=1}^{w}\left\langle U[0],JE_{i,\mp}\right\rangle e^{\pm\lambda_{i}t}E_{i}^{\pm}+e^{tH_{F}}U_{0}[0]+R(t)\label{eq:StAC}
\end{equation}
where $e^{tH_{F}}U_{0}[0]$ is the free evolution with initial data
$U_{0}[0]$ and
\begin{equation}
\left\Vert R(t)\right\Vert _{\dot{H}^{1}\times L^{2}}\rightarrow0,\,\,t\rightarrow\infty.
\end{equation}
With notations above, we can obtain a similar decomposition as \eqref{eq:StAC}
when the potential is moving. 
\begin{cor}
\label{cor:ACOne}Suppose $H$ admits no eigenfunction nor resonances
at zero. Let $u$ solve
\begin{equation}
\partial_{tt}u-\Delta u+V(x-\vec{\mu}t)u=0.
\end{equation}
Write
\begin{equation}
U=\left(u,u_{t}\right)^{t}\in C^{0}\left([0,\infty);\,\dot{H}^{1}\right)\times C^{0}\left([0,\infty);\,L^{2}\right),
\end{equation}
 with initial data \textup{$U[0]=\left(g,f\right)^{t}\in\dot{H}^{1}\times L^{2}$.
Then there exist free data
\[
U_{0}[0]=\left(g_{0},f_{0}\right)^{t}\in\dot{H}^{1}\times L^{2}
\]
 such that with $\gamma=\frac{1}{\sqrt{1-\left|\mu\right|^{2}}}$
\begin{equation}
U(t)=\sum_{i=1}^{w}a_{i,\pm}e^{\pm\lambda_{i}\gamma\left(t-\mu x_{1}\right)}E_{i,\mu}^{\pm}\left(x,t\right)+e^{tH_{F}}U_{0}[0]+R(t)\label{eq:ACOne}
\end{equation}
where
\[
E_{i,\mu}^{\pm}\left(x,t\right)=E_{i}^{\pm}\left(\gamma\left(x_{1}-\mu t\right),x_{2},x_{3}\right)
\]
and
\begin{equation}
\left\Vert R(t)\right\Vert _{\dot{H}^{1}\times L^{2}}\rightarrow0,\,\,t\rightarrow\infty.
\end{equation}
}
\end{cor}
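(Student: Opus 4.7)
The plan is to reduce to the stationary-potential case via Lorentz boost, invoke the standard asymptotic completeness statement \eqref{eq:StAC} in the boosted frame, and then transform back.

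First, I would perform the Lorentz transformation $L$ with velocity $\vec{\mu}$ along $\vec{e}_1$. In the new coordinates $(x',t')$, the equation becomes
\begin{equation*}
\partial_{t't'} u_L + H u_L = 0, \qquad u_L(x',0) = \tilde g(x'), \quad \partial_{t'} u_L(x',0) = \tilde f(x'),
\end{equation*}
with $\|\tilde f\|_{L^2} + \|\tilde g\|_{\dot H^1} \simeq \|f\|_{L^2} + \|g\|_{\dot H^1}$ by Theorem \ref{thm:generalC}. Since $H$ admits neither eigenfunction nor resonance at zero, the standard asymptotic completeness decomposition \eqref{eq:StAC} applies in the boosted frame:
\begin{equation*}
U_L(t') = \sum_{i=1}^{w} \bigl(a_{i,+} e^{\lambda_i t'} E_i^+ + a_{i,-} e^{-\lambda_i t'} E_i^-\bigr) + e^{t' J H_F}\,\tilde U_0[0] + R_L(t'),
\end{equation*}
where $a_{i,\pm} = \langle \tilde U[0], J E_i^{\mp}\rangle$, the free initial data $\tilde U_0[0] \in \dot H^1 \times L^2$ is produced by the standard wave-operator construction for $A_H$, and $\|R_L(t')\|_{\dot H^1 \times L^2} \to 0$ as $t' \to \infty$.

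Next I would transform this identity back to the original frame. Using the relation $t' = \gamma(t - \mu x_1)$, the bound-state parts become precisely $a_{i,\pm} e^{\pm \lambda_i \gamma(t - \mu x_1)} E_{i,\mu}^{\pm}(x,t)$. Because the d'Alembertian $\square$ is Lorentz invariant, the Lorentz transform of $e^{t'J H_F}\tilde U_0[0]$ is itself a solution of the free wave equation in $(x,t)$; I define $U_0[0] = (g_0, f_0)^t \in \dot H^1 \times L^2$ to be its Cauchy data at $t=0$, and this is well defined by another application of Theorem \ref{thm:generalC} to the Lorentz-transformed free evolution. Finally, setting
\begin{equation*}
R(x,t) := U(x,t) - \sum_{i=1}^{w} a_{i,\pm}\, e^{\pm\lambda_i \gamma(t-\mu x_1)} E_{i,\mu}^{\pm}(x,t) - e^{tJH_F} U_0[0],
\end{equation*}
one checks directly from the above construction that $R$ is the Lorentz transform (back to original coordinates) of $R_L$.

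The remaining task, and the main obstacle, is to show $\|R[t]\|_{\dot H^1 \times L^2} \to 0$ as $t \to \infty$ in the original frame, knowing this in the boosted frame. The difficulty is that a flat slice $t = T$ in the original frame corresponds to the slanted slice $t' = \gamma(T - \mu x_1)$ in the boosted frame, and the derivatives $(\nabla_x, \partial_t)$ and $(\nabla_{x'}, \partial_{t'})$ are related by the Lorentz matrix rather than being equal. To overcome this, I would note that $R$ satisfies a wave equation with stationary-after-boost potential $V(x-\vec{\mu}t)$ plus source terms built from the bound-state and free pieces, and these satisfy the decay hypothesis of Theorem \ref{thm:generalC} uniformly. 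Applying Theorem \ref{thm:generalC} (together with time translation invariance, which lets us shift the base slice) allows one to compare the energy of $R$ on $\{t = T\}$ with the energy of $R_L$ on flat slices $\{t' = T'\}$ with $T' \simeq \gamma T$; as $T \to \infty$ we have $T' \to \infty$, and the boosted-frame convergence $\|R_L[t']\|_{\dot H^1 \times L^2} \to 0$ then yields the desired $\|R[t]\|_{\dot H^1 \times L^2} \to 0$, completing the proof of \eqref{eq:ACOne}.
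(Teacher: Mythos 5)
Your proposal starts along the same lines as the paper (Lorentz boost, invoke the standard asymptotic completeness decomposition \eqref{eq:StAC} in the boosted frame, transform the bound-state parts back), but it diverges at the construction of the free data and the remainder, and the step you yourself flag as the ``main obstacle'' is a genuine gap that is not filled by what you write. You define $U_0[0]$ by Lorentz-transforming the boosted-frame free evolution and then want to transfer $\|R_L(t')\|_{\dot H^1\times L^2}\to 0$ across frames. Theorem \ref{thm:generalC}, as stated, applies to solutions of a \emph{homogeneous} wave equation $\partial_{tt}u-\Delta u+Vu=0$ with a pointwise-decaying potential, and compares the flat slice $t=0$ with the single slanted slice $t=vx_1$ passing through the origin. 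Your $R$ is not such a solution---as you note, it satisfies an inhomogeneous equation with source terms coming from the bound-state and free pieces---and no inhomogeneous analogue of Theorem \ref{thm:generalC} is available in the paper. Moreover, the slice $\{t=T\}$ in the original frame corresponds in the boosted frame to the slanted hyperplane $t'=\gamma^{-1}T-\mu x_1'$, not to a flat slice $t'=T'$; ``$T'\simeq \gamma T$'' is not well defined, and the time-translated Gr\"onwall argument of Lemma \ref{lem:upper}/\ref{lem:lower} would have to be redone with source terms and with a uniform-in-$T$ control that you have not established. So as written, the last step is a sketch of intent rather than a proof.

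The paper's proof sidesteps this entirely. It subtracts \emph{only} the bound-state part from $U$, so that the remainder $\mathcal{R}$ still contains the free piece. Since $P_b(H)\mathcal{R}_L=0$ by the boosted-frame decomposition, $\mathcal{R}$ is a scattering state in the sense of Definition \ref{AO} (and it solves the moving-potential wave equation because $U$ and the boosted bound-state contributions do). Theorem \ref{thm:scattering}, which is proved directly in the original frame for the moving-potential equation, then produces the free data $U_0[0]\in \dot H^1\times L^2$ \emph{and} the decay $\|R(t)\|_{\dot H^1\times L^2}\to 0$ in the original-frame norm, with no cross-frame transfer of the remainder required. If you want to salvage your approach, the clean fix is exactly this: do not Lorentz-transform the free part and the remainder separately; instead group them together as $\mathcal{R}$, check it is a scattering state, and let Theorem \ref{thm:scattering} do the rest.
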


\begin{proof}
Applying a Lorentz transformation such that under the new frame $\left(x',t'\right)$,
$V$ is stationary, by the standard asymptotic completeness decomposition,
one can write
\begin{equation}
U_{L}\left(x',t'\right)=\sum_{i=1}^{w}\left\langle U_{L}[0],JE_{i,\mp}\right\rangle e^{\pm\lambda_{i}t'}E_{i}^{\pm}\left(x'\right)+\mathcal{R}_{L}\left(x',t'\right),\label{eq:Ldecomp}
\end{equation}
where again, we used subscript $L$ to denote the function under the
new frame. 

Clearly, by the decomposition above \eqref{eq:Ldecomp},
\begin{equation}
P_{b}\left(H\right)\mathcal{R}_{L}\left(x',t'\right)=0.
\end{equation}
Then in the original frame,
\begin{equation}
U(t)=\sum_{i=1}^{w}a_{i,\pm}e^{\pm\lambda_{i}\gamma\left(t-\mu x_{1}\right)}E_{i,\mu}^{\pm}\left(x,t\right)+\mathcal{R}\left(x,t\right).
\end{equation}
where
\begin{equation}
a_{i,\pm}=\left\langle U_{L}[0],JE_{i,\mp}\right\rangle .
\end{equation}
By construction, $\mathcal{R}(x,t)$ satisfies the conditions in Theorem
\ref{thm:scattering}. Hence
\begin{equation}
\mathcal{R}(x,t)=e^{tH_{F}}U_{0}[0]+R(t)
\end{equation}
where $e^{tH_{F}}U_{0}[0]$ is the free evolution with initial data
$U_{0}[0]$ and
\begin{equation}
\left\Vert R(t)\right\Vert _{\dot{H}^{1}\times L^{2}}\rightarrow0,\,\,t\rightarrow\infty.
\end{equation}
Therefore, finally, we can write
\begin{equation}
U(t)=\sum_{i=1}^{w}a_{i,\pm}e^{\pm\lambda_{i}\gamma\left(t-\mu x_{1}\right)}E_{i,\mu}^{\pm}\left(x,t\right)+e^{tH_{F}}U_{0}[0]+R(t)
\end{equation}
with
\[
E_{i,\mu}^{\pm}\left(x,t\right)=E_{i}^{\pm}\left(\gamma\left(x_{1}-\mu t\right),x_{2},x_{3}\right)
\]
and
\begin{equation}
\left\Vert R(t)\right\Vert _{\dot{H}^{1}\times L^{2}}\rightarrow0,\,\,t\rightarrow\infty.
\end{equation}
The claim is proved.
\end{proof}
\begin{rem}
\label{rem:asyC}As a final remark, we point out that there is no
hope to establish an elegant asymptotic completeness if the potential
is not moving along a straight line. If there is a perturbation from
that case, the interaction among bound states becomes complicated.
 Basically, the mechanism is that if the evolution of one bound state
is activated, say the bound state with the highest energy, then it
will not only cause exponential growth with highest rate for itself
but also make the evolution of other bound states grow exponentially.
Meanwhile, if we have a scattering state, the evolution of bound states
is controllable. But one can obtain an exponential dichotomy decomposition
for the general case and for the subexponential part, one can show
the scattering behavior, see \cite{CJ}.
\end{rem}
\appendix

\section{Pointwise decay}

For the sake of completeness, in this appendix, we provide the proof
of dispersive estimates for the free wave equation in $\mathbb{R}^{3}$
based on the idea of reversed Strichartz estimates.
\begin{thm}
\label{thm:dispersive}In $\mathbb{R}^{3}$, suppose $f\in L^{2},\,\nabla f\in L^{1}$
and $g\in L^{2},\,\Delta g\in L^{1}$. Then one has the following
estimates:
\begin{equation}
\left\Vert \frac{\sin\left(t\sqrt{-\Delta}\right)}{\sqrt{-\Delta}}f\right\Vert _{L_{x}^{\infty}}\lesssim\frac{1}{\left|t\right|}\left\Vert \nabla f\right\Vert _{L_{x}^{1}},
\end{equation}
\begin{equation}
\left\Vert \cos\left(t\sqrt{-\Delta}\right)g\right\Vert _{L_{x}^{\infty}}\lesssim\frac{1}{\left|t\right|}\left\Vert \Delta g\right\Vert _{L_{x}^{1}}.
\end{equation}
\end{thm}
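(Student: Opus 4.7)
The plan is to start from Kirchhoff's formula, expressing both propagators as spherical means, and then reduce each estimate to a kernel bound by invoking either the fundamental theorem of calculus along radial rays (for the $\sin/\sqrt{-\Delta}$ piece) or a Newtonian potential representation together with Newton's shell theorem (for the $\cos$ piece). Throughout I would work on a dense class of Schwartz data so that all integrations by parts and identities are valid, and then pass to the general case by density using the stated norm hypotheses.

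For the first estimate, I would write
\begin{equation*}
\frac{\sin(t\sqrt{-\Delta})}{\sqrt{-\Delta}}f(x)=\frac{t}{4\pi}\int_{\mathbb{S}^{2}}f(x+t\omega)\,d\omega
\end{equation*}
and, assuming $f$ decays at infinity, use $f(x+t\omega)=-\int_{t}^{\infty}\omega\cdot\nabla f(x+s\omega)\,ds$. Rewriting in polar coordinates $y=x+s\omega$ with $dy=s^{2}\,ds\,d\omega$ produces
\begin{equation*}
\frac{\sin(t\sqrt{-\Delta})}{\sqrt{-\Delta}}f(x)=-\frac{t}{4\pi}\int_{|y-x|\geq t}\frac{(y-x)\cdot\nabla f(y)}{|y-x|^{3}}\,dy.
\end{equation*}
On the region $|y-x|\geq t$ the kernel is pointwise bounded by $t/|y-x|^{2}\leq 1/t$, which yields the dispersive bound $\lesssim t^{-1}\|\nabla f\|_{L^{1}}$ at once.

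For the second estimate I would decompose $\cos(t\sqrt{-\Delta})g=M_{x,t}[g]+t\,\partial_{t}M_{x,t}[g]$, where $M_{x,t}[g]=\frac{1}{4\pi}\int_{\mathbb{S}^{2}}g(x+t\omega)\,d\omega$. The second term equals $\frac{1}{4\pi t}\int_{|y-x|=t}\partial_{\nu}g\,d\sigma(y)$, which by the divergence theorem becomes $\frac{1}{4\pi t}\int_{B_{t}(x)}\Delta g\,dy$; this is obviously bounded by $t^{-1}\|\Delta g\|_{L^{1}}$. For $M_{x,t}[g]$ I would use the Newtonian potential representation $g(z)=-\frac{1}{4\pi}\int\frac{\Delta g(y)}{|z-y|}\,dy$, substitute $z=x+t\omega$, interchange integrals, and apply Newton's shell theorem $\int_{\mathbb{S}^{2}}|z+t\omega|^{-1}\,d\omega=4\pi/\max(t,|z|)$ to obtain
\begin{equation*}
M_{x,t}[g](x)=-\frac{1}{4\pi}\int\frac{\Delta g(y)}{\max(t,|x-y|)}\,dy,
\end{equation*}
whose modulus is $\leq \frac{1}{4\pi t}\|\Delta g\|_{L^{1}}$. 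Summing the two pieces finishes the proof.

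The main technical obstacle, as I see it, is the justification of the tail/decay manipulations: the FTC step for $f$ and the Newtonian potential representation for $g$ both require enough decay and regularity to make the improper integrals and the integration by parts legitimate. This is easy on Schwartz functions, and a density argument using $\|\nabla f\|_{L^{1}}$ (resp.\ $\|\Delta g\|_{L^{1}}$) plus $L^{2}$ control gives the final estimate in the stated class. The shell-theorem identity is elementary but must be applied carefully to distinguish the cases $|x-y|<t$ and $|x-y|\geq t$, which is exactly what produces the crucial $\max(t,|x-y|)^{-1}$ factor responsible for the $1/t$ decay.
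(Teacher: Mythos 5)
Your proof is correct, and it takes a genuinely different route than the paper's. Both proofs begin from Kirchhoff's spherical-mean representation, but the paper's strategy is organized around the theme of reversed Strichartz estimates: it first shows the $L_{x}^{\infty}L_{t}^{1}$ bounds
\[
\left\Vert t\,\tfrac{\sin(t\sqrt{-\Delta})}{\sqrt{-\Delta}}f\right\Vert _{L_{x}^{\infty}L_{t}^{1}}\lesssim\|f\|_{L^{1}},\qquad\left\Vert t\,\cos(t\sqrt{-\Delta})f\right\Vert _{L_{x}^{\infty}L_{t}^{1}}\lesssim\|\nabla f\|_{L^{1}},
\]
converts them by Chebyshev-type reasoning into tail bounds $\bigl\Vert\int_{t}^{\infty}(\cdot)\,ds\bigr\Vert_{L_{x}^{\infty}}\lesssim t^{-1}(\cdots)$, and then identifies the propagators with those tail integrals via the operator identities $\frac{\sin(t\sqrt{-\Delta})}{\sqrt{-\Delta}}=\int_{t}^{\infty}\cos(s\sqrt{-\Delta})\,ds$ and $\cos(t\sqrt{-\Delta})=\int_{t}^{\infty}\sqrt{-\Delta}\sin(s\sqrt{-\Delta})\,ds$, which the paper verifies by a distributional limit argument. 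Your proof instead derives explicit kernel representations pointwise: the FTC step along outgoing rays recasts $\frac{\sin(t\sqrt{-\Delta})}{\sqrt{-\Delta}}f$ as an integral of $\nabla f$ over $\{|y-x|\geq t\}$ against the kernel $t(y-x)|y-x|^{-3}$, while the Newtonian potential plus Newton's shell theorem turn $M_{x,t}[g]$ into an integral of $\Delta g$ against $\max(t,|x-y|)^{-1}$, and the divergence theorem handles the $t\,\partial_{t}M_{x,t}[g]$ piece. The underlying mechanism is the same (you are, in effect, integrating from $t$ out to $\infty$), but your version is more concrete, bypasses the operator-identity verification, and makes the $\max(t,|x-y|)^{-1}$ structure of the kernel explicit; the paper's version is less explicit but is designed to echo and motivate the reversed Strichartz estimates that are central to the rest of the paper. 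Both correctly obtain the bound in terms of $\|\Delta g\|_{L^{1}}$ rather than $\|D^{2}g\|_{L^{1}}$, which the paper points out is the novelty of this theorem relative to the standard literature. One small point: you should double-check that the Newtonian potential representation $g(z)=-\frac{1}{4\pi}\int\frac{\Delta g(y)}{|z-y|}\,dy$ is recovered after the density passage, since the limiting $g$ is only $L^{2}$ with $\Delta g\in L^{1}$; as you note, this holds a.e.\ via $L^{2}$-convergence of $\cos(t\sqrt{-\Delta})g_{n}$ and $L^{1}$-convergence of $\Delta g_{n}$, and is routine.
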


\begin{rem*}
Note that the second estimate is slightly different from the estimates
commonly used in the literature. For example, in Krieger-Schlag \cite{KS}
one needs the $L^{1}$ norm of $D^{2}g$ instead of $\Delta g$.
\end{rem*}
\begin{proof}
First of all, we consider
\begin{equation}
\frac{\sin\left(t\sqrt{-\Delta}\right)}{\sqrt{-\Delta}}f.
\end{equation}
In $\mathbb{R}^{3}$, one has
\begin{equation}
\frac{\sin\left(t\sqrt{-\Delta}\right)}{\sqrt{-\Delta}}f=\frac{1}{4\pi t}\int_{\left|x-y\right|=t}f(y)\,dy.
\end{equation}
Without loss of generality, we assume $t\geq0$.

Multiplying $t$ and integrating, we obtain
\begin{eqnarray}
\int_{0}^{\infty}\left|t\frac{\sin\left(t\sqrt{-\Delta}\right)}{\sqrt{-\Delta}}f\right|dt & \lesssim & \int_{0}^{\infty}\int_{\mathbb{S}^{2}}\left|f(x+r\omega)\right|r^{2}\,d\omega dr\nonumber \\
 & \lesssim & \left\Vert f\right\Vert _{L_{x}^{1}}.
\end{eqnarray}
Therefore,
\begin{equation}
\left\Vert t\frac{\sin\left(t\sqrt{-\Delta}\right)}{\sqrt{-\Delta}}f\right\Vert _{L_{x}^{\infty}L_{t}^{1}}\lesssim\left\Vert f\right\Vert _{L_{x}^{1}}.
\end{equation}
Notice that, from the estimate above, we also have
\begin{equation}
\left\Vert \int_{t}^{\infty}\frac{\sin\left(s\sqrt{-\Delta}\right)}{\sqrt{-\Delta}}f\,ds\right\Vert _{L_{x}^{\infty}}\lesssim\frac{1}{\left|t\right|}\left\Vert f\right\Vert _{L_{x}^{1}}.
\end{equation}
Replacing $f$ with $\Delta f$, it implies that
\begin{equation}
\left\Vert \int_{t}^{\infty}\sqrt{-\Delta}\sin\left(s\sqrt{-\Delta}\right)f\,ds\right\Vert _{L_{x}^{\infty}}\lesssim\frac{1}{\left|t\right|}\left\Vert \Delta f\right\Vert _{L_{x}^{1}}.
\end{equation}
On the other hand,
\begin{eqnarray}
\int_{0}^{\infty}\left|t\cos\left(t\sqrt{-\Delta}\right)f\right|dt & \lesssim & \int_{0}^{\infty}\int_{\mathbb{S}^{2}}\left|rf\left(x+r\omega\right)d\omega+r^{2}\partial_{r}f\left(x+r\omega\right)\right|\,d\omega dr\nonumber \\
 & \lesssim & \left\Vert \nabla f\right\Vert _{L_{x}^{1}}
\end{eqnarray}
where in the last inequality, we applied integration by parts in $r$
in the first term of the RHS of the first line.

Therefore,
\begin{equation}
\left\Vert t\cos\left(t\sqrt{-\Delta}\right)f\right\Vert _{L_{x}^{\infty}L_{t}^{1}}\lesssim\left\Vert \nabla f\right\Vert _{L_{x}^{1}}.
\end{equation}
Hence
\begin{equation}
\left\Vert \int_{t}^{\infty}\cos\left(s\sqrt{-\Delta}\right)f\,ds\right\Vert _{L_{x}^{\infty}}\lesssim\frac{1}{\left|t\right|}\left\Vert \nabla f\right\Vert _{L_{x}^{1}}.
\end{equation}
Finally, we check
\begin{equation}
\frac{\sin\left(t\sqrt{-\Delta}\right)}{\sqrt{-\Delta}}f=\int_{t}^{\infty}\cos\left(s\sqrt{-\Delta}\right)f\,ds,
\end{equation}
and
\begin{equation}
\cos\left(t\sqrt{-\Delta}\right)g=\int_{t}^{\infty}\sqrt{-\Delta}\sin\left(s\sqrt{-\Delta}\right)g\,ds.
\end{equation}
It suffices to show expressions hold for tast functions. Let $f,\,g,\,h$ be any test functions. Define
\begin{equation}
Ag=\cos\left(t\sqrt{-\Delta}\right)g-\int_{t}^{\infty}\sqrt{-\Delta}\sin\left(s\sqrt{-\Delta}\right)g\,ds
\end{equation}
and
\begin{equation}
Bf=\frac{\sin\left(t\sqrt{-\Delta}\right)}{\sqrt{-\Delta}}f-\int_{t}^{\infty}\cos\left(s\sqrt{-\Delta}\right)f\,ds.
\end{equation}
It is easy to check that $A,\,B$ are independent of $t$ by taking the time derivative of the above expressions.

To see $A=B=0$, for $A$, one observes that
\begin{equation}
\left\langle \cos\left(t\sqrt{-\Delta}\right)g,\,h\right\rangle \rightarrow0
\end{equation}
and
\begin{equation}
\left\Vert \int_{t}^{\infty}\frac{\sin\left(s\sqrt{-\Delta}\right)}{\sqrt{-\Delta}}f\,ds\right\Vert _{L_{x}^{\infty}}\lesssim\frac{1}{\left|t\right|}\left\Vert f\right\Vert _{L_{x}^{1}}.
\end{equation}
Therefore,
\begin{equation}
\left\langle Ag,\,h\right\rangle \rightarrow0,\,t\rightarrow\infty.
\end{equation}
Since $A$ is independent of $t$, one concludes that
\begin{equation}
\left\langle Ag,\,h\right\rangle =0
\end{equation}
for any pair of test functions and hence
\begin{equation}
A=0.
\end{equation}
Similarly, we get
\begin{equation}
B=0.
\end{equation}
Therefore by our calculations above, we can obtain the dispersive
estimates for the free wave equation,
\begin{eqnarray}
\left\Vert \frac{\sin\left(t\sqrt{-\Delta}\right)}{\sqrt{-\Delta}}f\right\Vert _{L_{x}^{\infty}} & = & \left\Vert \int_{t}^{\infty}\cos\left(s\sqrt{-\Delta}\right)f\,ds\right\Vert _{L_{x}^{\infty}}\nonumber \\
 & \lesssim & \frac{1}{\left|t\right|}\left\Vert \nabla f\right\Vert _{L_{x}^{1}},
\end{eqnarray}
and
\begin{eqnarray}
\left\Vert \cos\left(t\sqrt{-\Delta}\right)g\right\Vert _{L_{x}^{\infty}} & = & \left\Vert \int_{t}^{\infty}\sqrt{-\Delta}\sin\left(s\sqrt{-\Delta}\right)g\,ds\right\Vert _{L_{x}^{\infty}}\nonumber \\
 & \lesssim & \frac{1}{\left|t\right|}\left\Vert \Delta g\right\Vert _{L_{x}^{1}}.
\end{eqnarray}
The theorem is proved.
\end{proof}

\section{Local energy decay}

We derive the local energy decay estimate for the free wave equation
by the Fourier method.

Recall the coarea formula: for a a real-valued Lipschitz function
$u$ and a $L^{1}$ function $g$ then
\begin{equation}
\int_{\mathbb{R}^{n}}g(x)\left|\nabla u(x)\right|dx=\int_{\mathbb{R}}\int_{\left\{ u(x)=t\right\} }g(x)\,d\sigma(x)dt,\label{eq:coarea}
\end{equation}
where $\sigma$ is the surface measure.
\begin{lem}
\label{lem:area}For $F\in C_{0}^{\infty}$, $\phi$ smooth and non-degenerate,i.e.
$\left|\nabla\phi(x)\right|\neq0$, one has
\begin{equation}
\int_{\mathbb{R}}\int_{\mathbb{R}^{n}}e^{i\lambda\phi(x)}F(x)\,dxd\lambda=\left(2\pi\right)^{n}\int_{\left\{ \phi=0\right\} }\frac{F(x)}{\left|\nabla\phi(x)\right|}\,d\sigma(x).\label{eq:area}
\end{equation}
\end{lem}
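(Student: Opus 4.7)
The strategy is to slice the $n$-dimensional oscillatory integral along the level sets of $\phi$ via the coarea formula, reducing matters to a one-dimensional Fourier inversion. Concretely, define
\begin{equation}
G(t) := \int_{\{\phi = t\}} \frac{F(x)}{|\nabla \phi(x)|}\, d\sigma(x).
\end{equation}
Since $F \in C_{0}^{\infty}$ and $|\nabla \phi|$ is bounded below on a neighborhood of $\operatorname{supp} F$ (by non-degeneracy and compactness, using a partition of unity if needed), $G$ is smooth and compactly supported on $\mathbb{R}$, hence Schwartz.

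First I would apply the coarea formula \eqref{eq:coarea} with $u = \phi$ and $g(x) = F(x) e^{i\lambda \phi(x)} / |\nabla \phi(x)|$. Since $e^{i\lambda \phi(x)} = e^{i\lambda t}$ is constant on each level set $\{\phi = t\}$, the $|\nabla \phi|$ cancels and one obtains, for every fixed $\lambda$,
\begin{equation}
\int_{\mathbb{R}^n} F(x) e^{i\lambda \phi(x)}\, dx \;=\; \int_{\mathbb{R}} e^{i\lambda t} G(t)\, dt.
\end{equation}
The right-hand side is (a reflection of) the Fourier transform of the Schwartz function $G$, so it is itself Schwartz in $\lambda$. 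Integrating in $\lambda$ and invoking the Fourier inversion formula at $t = 0$ then yields
\begin{equation}
\int_{\mathbb{R}} \int_{\mathbb{R}} e^{i\lambda t} G(t)\, dt\, d\lambda \;=\; 2\pi\, G(0) \;=\; 2\pi \int_{\{\phi = 0\}} \frac{F(x)}{|\nabla \phi(x)|}\, d\sigma(x),
\end{equation}
which is the desired identity (up to the constant; the natural factor coming out of this scheme is $2\pi$).

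The only real obstacle is that the original double integral is not absolutely convergent, so Fubini does not apply directly to the $(x,\lambda)$-integration and Fourier inversion must be justified. I would handle this in the standard way: insert a cutoff $\eta(\lambda/R)$ with $\eta \in C_{0}^{\infty}(\mathbb{R})$ and $\eta(0)=1$, for which Fubini is trivially legal on the compactly supported $\lambda$-integral, and then pass to the limit $R \to \infty$ using the Schwartz decay of $\widehat{G}$ together with dominated convergence on the $\lambda$-side and continuity of $G$ at $0$ on the spatial side. The non-degeneracy hypothesis $|\nabla \phi| \neq 0$ is used exactly at the step that produces $G \in C_{c}^{\infty}$, without which neither the coarea slicing nor the Fourier-inversion limit would be controllable.
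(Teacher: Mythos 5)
Your proof follows the same route as the paper's: apply the coarea formula to reduce the $n$-dimensional oscillatory integral to a one-dimensional integral against $G(t)=\int_{\{\phi=t\}}F/|\nabla\phi|\,d\sigma$, then integrate in $\lambda$ and invoke Fourier inversion at $t=0$. You add two things the paper does not spell out: a justification of the iterated-integral manipulation (the double integral is not absolutely convergent, so you regularize with a cutoff $\eta(\lambda/R)$ and pass to the limit using the Schwartz decay of $\widehat{G}$), and the correct normalization constant. On the latter point you are right that the natural constant produced by this computation is $2\pi$, not $(2\pi)^{n}$: the exponent $n$ plays no role once the coarea formula has collapsed everything to a one-dimensional Fourier inversion, and the paper's own chain of equalities is internally inconsistent in the power of $2\pi$ (it writes $(2\pi)^{n/2}\int\hat g=(2\pi)^{n}g(0)$ and then drops the prefactor altogether on the final line). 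Since the lemma is only used up to a multiplicative constant in the proof of the local energy decay estimate, this discrepancy is harmless there, but your version of the constant is the correct one.
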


\begin{proof}
From \eqref{eq:coarea},
\begin{equation}
\int_{\mathbb{R}}\int_{\mathbb{R}^{n}}e^{i\lambda\phi(x)}F(x)\,dxd\lambda=\int_{\mathbb{R}}\int_{\mathbb{R}}e^{i\lambda y}\int_{\left\{ \phi=y\right\} }\frac{F(x)}{\left|\nabla\phi(x)\right|}\,d\sigma(x)dyd\lambda.
\end{equation}
Denote $\int_{\left\{ \phi=y\right\} }\frac{F(x)}{\left|\nabla\phi(x)\right|}\,d\sigma(x)=g(y)$,
then
\begin{eqnarray}
\int_{\mathbb{R}}\int_{\mathbb{R}^{n}}e^{i\lambda\phi(x)}F(x)\,dxd\lambda & = & \int_{\mathbb{R}}\int_{\mathbb{R}}e^{i\lambda y}g(y)\,dyd\lambda\nonumber \\
 & = & \left(2\pi\right)^{\frac{n}{2}}\int_{\mathbb{R}}\hat{g}(\lambda)\,d\lambda\nonumber \\
 & = & \left(2\pi\right)^{n}g(0)\nonumber \\
 & = & \int_{\left\{ \phi=0\right\} }\frac{F(x)}{\left|\nabla\phi(x)\right|}\,d\sigma(x).
\end{eqnarray}
We are done.
\end{proof}
It suffices to consider the half wave evolution,
\begin{equation}
e^{it\sqrt{-\Delta}}f.
\end{equation}
\begin{thm}[Local energy decay]
\label{thm:local}Let $\chi\geq0$ be a smooth
cut-off function such that $\hat{\chi}$ has compact support. Then
\begin{equation}
\left\Vert \chi(x)e^{it\sqrt{-\Delta}}f\right\Vert _{L_{t,x}^{2}}\lesssim\left\Vert f\right\Vert _{L_{x}^{2}}.\label{eq:local}
\end{equation}
\end{thm}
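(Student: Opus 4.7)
The plan is to expand the half-wave evolution by its Fourier representation
\[
e^{it\sqrt{-\Delta}}f(x) = (2\pi)^{-3/2}\int_{\mathbb{R}^{3}} e^{i(x\cdot\xi+t|\xi|)}\widehat f(\xi)\,d\xi,
\]
then to compute $\|\chi(x)e^{it\sqrt{-\Delta}}f\|_{L^{2}_{t,x}}^{2}$ by writing $|e^{it\sqrt{-\Delta}}f(x)|^{2}$ as a double integral in variables $(\xi,\eta)$ and performing the $x$- and $t$-integrations in closed form. The $x$-integration against $|\chi(x)|^{2}$ produces the Fourier transform $\widehat{|\chi|^{2}}(\eta-\xi)$, while the $t$-integration of $e^{it(|\xi|-|\eta|)}$ produces $2\pi\,\delta(|\xi|-|\eta|)$. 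At this step Lemma~\ref{lem:area}, applied with $\phi(\xi,\eta)=|\xi|-|\eta|$ on $\mathbb{R}^{6}$ (so that $|\nabla_{\xi,\eta}\phi|=\sqrt 2$ is constant), converts the distributional integral into a genuine surface integral over the cone $\{|\xi|=|\eta|\}\subset\mathbb{R}^{6}$.

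Next I would parametrize the surface by $\xi=r\omega$, $\eta=r\omega'$ with $r>0$ and $\omega,\omega'\in\mathbb{S}^{2}$, using $d\xi\,d\eta=r^{2}s^{2}\,dr\,ds\,d\omega\,d\omega'$ and the delta $\delta(r-s)$ to arrive at an identity of the form
\[
\|\chi\,e^{it\sqrt{-\Delta}}f\|_{L^{2}_{t,x}}^{2} \simeq \int_{0}^{\infty}\!\int_{\mathbb{S}^{2}}\!\int_{\mathbb{S}^{2}} r^{4}\,\widehat{|\chi|^{2}}\!\bigl(r(\omega-\omega')\bigr)\,\widehat f(r\omega)\,\overline{\widehat f(r\omega')}\,d\omega\,d\omega'\,dr.
\]
For each fixed $r>0$ the inner double integral is a Hermitian quadratic form on $L^{2}(\mathbb{S}^{2})$ with kernel $K_{r}(\omega,\omega')=r^{4}\widehat{|\chi|^{2}}\bigl(r(\omega-\omega')\bigr)$; I would estimate its operator norm by Schur's test, which by the symmetry $K_{r}(\omega,\omega')=\overline{K_{r}(\omega',\omega)}$ reduces to bounding $\sup_{\omega}\int_{\mathbb{S}^{2}}|K_{r}(\omega,\omega')|\,d\omega'$.

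The decisive input is the hypothesis that $\widehat\chi$ has compact support, so that $\widehat{|\chi|^{2}}=\widehat{\bar\chi}\ast\widehat\chi$ is also supported in some ball of radius $R$ and is uniformly bounded. Consequently the integrand vanishes unless $|\omega-\omega'|\le R/r$, and the spherical cap of angular radius $R/r$ has area bounded by $\min(4\pi,CR^{2}/r^{2})$, which gives
\[
\sup_{\omega}\int_{\mathbb{S}^{2}}|K_{r}(\omega,\omega')|\,d\omega'\,\lesssim\,r^{4}\min\!\bigl(1,R^{2}/r^{2}\bigr)\,\lesssim\,R^{2}r^{2}
\]
uniformly in $r>0$. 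Plugging this Schur bound back and integrating in $r$ yields
\[
\|\chi\,e^{it\sqrt{-\Delta}}f\|_{L^{2}_{t,x}}^{2}\,\lesssim\,R^{2}\int_{0}^{\infty}\!\int_{\mathbb{S}^{2}}r^{2}|\widehat f(r\omega)|^{2}\,d\omega\,dr\,=\,R^{2}\|f\|_{L^{2}}^{2},
\]
which is the claimed estimate. The main obstacle is verifying that the Schur bound is \emph{uniform} in the radial parameter $r$; this forces one to split the analysis into the regimes $r\le R$ (where the cap is essentially all of $\mathbb{S}^{2}$ and the $r^{4}$ factor is tamed by $r^{4}\le R^{2}r^{2}$) and $r\ge R$ (where the $1/r^{2}$ decay of the cap area cancels two of the four powers of $r$), and the happy coincidence that the resulting growth $R^{2}r^{2}$ matches exactly the $r^{2}$ factor in $\|f\|_{L^{2}}^{2}=\int r^{2}|\widehat f(r\omega)|^{2}\,d\omega\,dr$.
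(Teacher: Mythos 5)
Your proof is correct and takes essentially the same route as the paper: both expand in Fourier, use Lemma~\ref{lem:area} with $\phi(\xi,\eta)=|\xi|-|\eta|$ to land on a surface integral over the cone $\{|\xi|=|\eta|\}\subset\mathbb{R}^{6}$, and then reduce the resulting bilinear form to a Schur-type bound on a localized kernel, using the frequency localization of the cut-off to estimate the size of the relevant spherical caps. The only cosmetic differences are that you phrase the symmetrization as Schur's test on $L^{2}(\mathbb{S}^{2})$ (the paper uses the AM--GM inequality, which yields the same bound) and you correctly track $\widehat{|\chi|^{2}}$ as the kernel, whereas the paper bounds the slightly different quantity $\int\chi|u|^{2}\,dx\,dt$ with kernel $\hat\chi$ — a harmless discrepancy since $\chi\ge 0$ is bounded.
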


\begin{proof}
Consider
\begin{eqnarray}
\int_{\mathbb{R}}\int_{\mathbb{R}^{n}}\left|e^{it\sqrt{-\Delta}}f\right|^{2}(x)\chi(x)\,dxdt & = & \int_{\mathbb{R}}\left\langle e^{it\sqrt{-\Delta}}f,\chi(x)e^{it\sqrt{-\Delta}}f\right\rangle _{L^{2}}dt\nonumber \\
 & = & \int_{\mathbb{R}}\left\langle e^{it\left|\xi\right|}\hat{f}\left(\xi\right),\left[e^{it\left|\xi\right|}\hat{f}\left(\xi\right)\right]*\hat{\chi}\left(\xi\right)\right\rangle _{L^{2}}dt\\
 & = & \int_{\mathbb{R}}\int_{\mathbb{R}^{n}}\int_{\mathbb{R}^{n}}e^{it\left(\left|\xi\right|-\left|\eta\right|\right)}\hat{\chi}\left(\xi-\eta\right)\hat{f}\left(\xi\right)\hat{f}\left(\eta\right)\,d\eta d\xi dt.\nonumber 
\end{eqnarray}
Applying Lemma \ref{lem:area} with $\phi\left(\xi,\eta\right)=\left|\xi\right|-\left|\eta\right|$,
the surface $\left\{ \phi=0\right\} $ becomes $\left\{ \left|\xi\right|=\left|\eta\right|\right\} $
and $\left|\nabla\phi\right|=\sqrt{2}$. It follows that
\begin{eqnarray}
\int_{\mathbb{R}}\int_{\mathbb{R}^{n}}\left|e^{it\sqrt{-\Delta}}f\right|^{2}(x)\chi(x)\,dxdt & \simeq & \int_{\left|\xi\right|=\left|\eta\right|}\hat{\chi}\left(\xi-\eta\right)\hat{f}\left(\xi\right)\hat{f}\left(\eta\right)\,d\sigma\nonumber \\
 & \lesssim & \int_{\left|\xi\right|=\left|\eta\right|}\left|\hat{\chi}\left(\xi-\eta\right)\right|\left[\left|\hat{f}\left(\xi\right)\right|^{2}+\left|\hat{f}\left(\eta\right)\right|^{2}\right]\,d\sigma\nonumber \\
 & \lesssim & \int_{\mathbb{R}^{n}}\left|\hat{f}\left(\xi\right)\right|^{2}\int_{\left|\xi\right|=\left|\eta\right|}\left|\hat{\chi}\left(\xi-\eta\right)\right|\,d\sigma d\xi\nonumber \\
 & \lesssim & \sup_{\xi}\left|K\left(\xi\right)\right|\int_{\mathbb{R}^{n}}\left|\hat{f}\left(\xi\right)\right|^{2}d\xi\\
 & \lesssim & \int_{\mathbb{R}^{n}}\left|f(x)\right|^{2}dx.\nonumber 
\end{eqnarray}
It reduces to show that
\begin{equation}
K(\xi)=\int_{\left|\xi\right|=\left|\eta\right|}\left|\hat{\chi}\left(\xi-\eta\right)\right|\,d\sigma
\end{equation}
is bounded uniformly in $\xi$. Since $\hat{\chi}\left(\xi\right)$
decays fast, we have
\[
\left|\hat{\chi}(\xi)\right|\lesssim\left\langle \xi\right\rangle ^{-N}
\]
and
\[
\left|\hat{\chi}\left(\xi\right)\right|\lesssim\left|\xi\right|^{1-\epsilon-n},
\]
where as usual, $\left\langle \xi\right\rangle =\left(1+\left|\xi\right|^{2}\right)^{\frac{1}{2}}$. 

Note that
\begin{eqnarray}
K(\xi) & = & \int_{\left|\xi\right|=\left|\eta\right|}\left|\hat{\chi}\left(\xi-\eta\right)\right|\,d\sigma\\
 & = & \int_{\left|\zeta-\xi\right|=\left|\xi\right|}\left|\hat{\chi}\left(\zeta\right)\right|\,d\sigma\nonumber \\
 & \lesssim & \int_{\left|\zeta-\xi\right|=\left|\xi\right|,\left|\zeta\right|<1}\left|\hat{\chi}\left(\zeta\right)\right|\,d\sigma\nonumber \\
 &  & +\int_{\left|\zeta-\xi\right|=\left|\xi\right|,\left|\zeta\right|>1}\left|\hat{\chi}\left(\zeta\right)\right|\,d\sigma\nonumber \\
 & \lesssim & C(n)\nonumber 
\end{eqnarray}
which is uniformly bounded in $\xi$ and only depends on $n$.

Therefore, we can conclude
\begin{equation}
\left\Vert \chi(x)e^{it\sqrt{-\Delta}}f\right\Vert _{L_{t,x}^{2}}\lesssim\left\Vert f\right\Vert _{L_{x}^{2}}.
\end{equation}
We are done.
\end{proof}
With dyadic decomposition and weights, one has a global version of
the result above:
\begin{cor}
\label{cor:fullwave}$\forall\epsilon>0$, one has
\begin{equation}
\left\Vert \left(1+\left|x\right|\right)^{-\frac{1}{2}-\epsilon}e^{it\sqrt{-\Delta}}f\right\Vert _{L_{t,x}^{2}}\lesssim_{\epsilon}\left\Vert f\right\Vert _{L_{x}^{2}}.\label{eq:fullwave}
\end{equation}
\end{cor}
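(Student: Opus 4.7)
The plan is to perform a dyadic decomposition in $|x|$ and reduce each piece to Theorem \ref{thm:local} by the scaling covariance of the half-wave evolution. Fix a smooth $\psi$ supported in the annulus $\{1/2 < |x| < 2\}$ with $\widehat{\psi}$ compactly supported, together with a similar cutoff $\chi_0$ covering $|x|\le 1$, such that $1 = \chi_0(x) + \sum_{j\ge 1}\psi(x/2^j)$. On the support of $\psi(\cdot/2^j)$, the weight $(1+|x|)^{-1/2-\epsilon}$ is comparable to $2^{-j(1/2+\epsilon)}$. Since the dyadic pieces have essentially disjoint supports, the full weighted $L^2_{t,x}$ norm squared splits as $\sum_{j\ge 0} 2^{-j(1+2\epsilon)}\,\|\psi(x/2^j)\,e^{it\sqrt{-\Delta}}f\|_{L^2_{t,x}}^2$ (modulo the trivial $\chi_0$ piece, handled directly by Theorem \ref{thm:local}), and the goal reduces to proving the dyadic bound $\|\psi(x/2^j)\,e^{it\sqrt{-\Delta}}f\|_{L^2_{t,x}}^2 \lesssim 2^j\,\|f\|_{L^2}^2$.

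For this dyadic bound I would exploit that $\sqrt{-\Delta}$ is homogeneous of degree one, so that with $F(y):=f(2^j y)$ one has $e^{is\sqrt{-\Delta}}F(y) = (e^{i2^j s\sqrt{-\Delta}}f)(2^j y)$. Performing the change of variables $x=2^j y$, $t=2^j s$ in the $L^2_{t,x}$ integral produces a Jacobian $2^{j(n+1)} = 2^{4j}$, while $\|F\|_{L^2}^2 = 2^{-jn}\|f\|_{L^2}^2 = 2^{-3j}\|f\|_{L^2}^2$; the cutoff $\psi(x/2^j)$ becomes the fixed $\psi(y)$, whose hypotheses ($\widehat{\psi}$ with compact support) are preserved. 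Applying Theorem \ref{thm:local} to $\psi$ then yields $\|\psi(x/2^j)\,e^{it\sqrt{-\Delta}}f\|_{L^2_{t,x}}^2 \lesssim 2^{j(n+1)}\cdot 2^{-jn}\|f\|_{L^2}^2 = 2^j\|f\|_{L^2}^2$, as required.

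Combining the two steps yields $\|(1+|x|)^{-1/2-\epsilon}e^{it\sqrt{-\Delta}}f\|_{L^2_{t,x}}^2 \lesssim \sum_{j\ge 0}2^{-j(1+2\epsilon)}\cdot 2^j\,\|f\|_{L^2}^2 = \sum_{j\ge 0}2^{-2j\epsilon}\,\|f\|_{L^2}^2 \lesssim_{\epsilon}\|f\|_{L^2}^2$. The only real subtlety is the bookkeeping of powers: the scaling contributes a net growth of exactly $2^j$ per dyadic shell, which is balanced by the weight's $2^{-j(1+2\epsilon)}$, leaving the summable $2^{-2j\epsilon}$. This also explains transparently why the endpoint $\epsilon=0$ fails — the geometric series degenerates — and why the implicit constant blows up like $(1-2^{-2\epsilon})^{-1}$ as $\epsilon\downarrow 0$.
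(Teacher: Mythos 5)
Your proof follows essentially the same route as the paper's: dyadic decomposition of the weight, scaling covariance of $e^{it\sqrt{-\Delta}}$ to reduce each shell to Theorem \ref{thm:local}, and summation of the resulting geometric series; the only cosmetic difference is your use of annular cutoffs forming a genuine partition of unity, whereas the paper sums the nested ball cutoffs $\chi(2^{-j}x)$ and observes the sum is comparable to the weight. One small slip worth flagging: you ask $\psi$ to be supported in a compact annulus \emph{and} to have $\widehat{\psi}$ compactly supported, which is impossible by Paley--Wiener; this is harmless, since the proof of Theorem \ref{thm:local} only uses rapid decay of $\widehat{\chi}$ (Schwartz decay suffices), a concession the paper also makes implicitly when it takes $\chi$ to be a ``smooth version of $1_{B_1(0)}$.''
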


\begin{proof}
Let $\chi(x)$ from Theorem \ref{thm:local} be a smooth version of
$1_{B_{1}(0)}$, the indicator function of the unit ball. It follows
that
\begin{eqnarray}
\left\Vert \chi\left(2^{-j}x\right)e^{it\sqrt{-\Delta}}f\right\Vert _{L_{t,x}^{2}}\nonumber \\
=2^{\frac{jn}{2}}2^{\frac{j}{2}}\left\Vert \chi\left(x\right)\left(e^{it\sqrt{-\Delta}}f\right)\left(2^{j}t,2^{j}x\right)\right\Vert _{L_{t,x}^{2}}\nonumber \\
=2^{\frac{jn}{2}}2^{\frac{j}{2}}\left\Vert \chi\left(x\right)\left(e^{it\sqrt{-\Delta}}f\left(2^{j}\cdot\right)\right)\right\Vert _{L_{t,x}^{2}}\nonumber \\
\lesssim2^{\frac{jn}{2}}2^{\frac{j}{2}}\left\Vert f\left(2^{j}\cdot\right)\right\Vert _{L_{x}^{2}}\nonumber \\
\lesssim2^{\frac{j}{2}}\left\Vert f\right\Vert _{L_{x}^{2}}.
\end{eqnarray}
Notice that
\begin{equation}
\left(1+\left|x\right|\right)^{-\frac{1}{2}-\epsilon}\simeq\sum_{j\geq0}2^{-j\left(\frac{1}{2}+\epsilon\right)}\chi\left(2^{-j}x\right)
\end{equation}
then with our computations above, we can conclude that
\begin{equation}
\left\Vert \sum_{j\geq0}2^{-j\left(\frac{1}{2}+\epsilon\right)}\chi\left(2^{-j}x\right)e^{it\sqrt{-\Delta}}f\right\Vert _{L_{t,x}^{2}}\lesssim_{\epsilon}\left\Vert f\right\Vert _{L_{x}^{2}},
\end{equation}
and hence
\begin{equation}
\left\Vert \left(1+\left|x\right|\right)^{-\frac{1}{2}-\epsilon}e^{it\sqrt{-\Delta}}f\right\Vert _{L_{t,x}^{2}}\lesssim_{\epsilon}\left\Vert f\right\Vert _{L_{x}^{2}}.
\end{equation}
The corollary is proved.
\end{proof}

\section{Global existence}

In this appendix, we discuss the global existence of solutions to
the wave equation with time-dependent potentials. Lorentz transformations
are important tools in our analysis. Lorentz transformations are rotations
of space-time, therefore, a priori, one needs to show the global existence
of solutions to wave equations with time-dependent potentials.
\begin{thm}
\label{thm:globalexistence}Assume $V(x,t)\in L_{t,x}^{\infty}$.
Then for each $\left(g,\,f\right)\in H^{1}\left(\mathbb{R}^{3}\right)\times L^{2}\left(\mathbb{R}^{3}\right),$
there is a unique solution $\left(u,\,u_{t}\right)\in C\left(\mathbb{R},\,H^{1}\left(\mathbb{R}^{3}\right)\right)\times C\left(\mathbb{R},\,L^{2}\left(\mathbb{R}^{3}\right)\right)$
to
\begin{equation}
\partial_{tt}u-\Delta u+V(x,t)u=0\label{eq:timedepexist}
\end{equation}
with initial data
\begin{equation}
u(x,0)=g,\,\partial_{t}u(x,0)=f.
\end{equation}
\end{thm}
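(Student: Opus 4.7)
The strategy is Picard iteration on short time intervals combined with an \emph{a priori} energy estimate to iterate the local solution globally. First, recast the Cauchy problem in Duhamel form,
\begin{equation}
u(t)=\cos(t\sqrt{-\Delta})g+\frac{\sin(t\sqrt{-\Delta})}{\sqrt{-\Delta}}f-\int_{0}^{t}\frac{\sin((t-s)\sqrt{-\Delta})}{\sqrt{-\Delta}}V(\cdot,s)u(s)\,ds,
\end{equation}
and view it as a fixed-point equation in the Banach space $X_{T}:=C([-T,T];H^{1})\cap C^{1}([-T,T];L^{2})$ with norm $\|u\|_{X_{T}}=\sup_{|t|\le T}(\|u(t)\|_{H^{1}}+\|u_{t}(t)\|_{L^{2}})$.

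By the spectral theorem (Fourier transform) the free propagators satisfy $\|\cos(t\sqrt{-\Delta})g\|_{H^{1}}\le\|g\|_{H^{1}}$ and $\|\tfrac{\sin(t\sqrt{-\Delta})}{\sqrt{-\Delta}}f\|_{H^{1}}\lesssim(1+|t|)\|f\|_{L^{2}}$, with analogous bounds on $\partial_{t}$ of each. Since $V\in L^{\infty}_{t,x}$, the nonlinearity obeys $\|V(s)u(s)\|_{L^{2}}\le\|V\|_{L^{\infty}}\|u(s)\|_{L^{2}}\le\|V\|_{L^{\infty}}\|u\|_{X_{T}}$, so the Duhamel term is a bounded map of $X_{T}$ into itself with operator norm at most $CT(1+T)\|V\|_{L^{\infty}}$. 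Choosing $T=T(\|V\|_{L^{\infty}})$ small enough that this is $\le1/2$ yields a strict contraction, producing a unique local solution on $[-T,T]$ whose existence time depends only on $\|V\|_{L^{\infty}}$, not on the data.

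For the \emph{a priori} bound, I would test the equation against $u_{t}$ to obtain formally
\begin{equation}
\frac{d}{dt}\Big[\tfrac{1}{2}\int\bigl(|u_{t}|^{2}+|\nabla u|^{2}\bigr)\,dx\Big]=-\int V(x,t)\,u\,u_{t}\,dx,
\end{equation}
which when combined with $\frac{d}{dt}\|u\|_{L^{2}}^{2}=2\int uu_{t}\,dx$ shows that the quantity $\Phi(t):=\|u(t)\|_{L^{2}}^{2}+\|\nabla u(t)\|_{L^{2}}^{2}+\|u_{t}(t)\|_{L^{2}}^{2}$ satisfies $|\Phi'(t)|\le(1+\|V\|_{L^{\infty}})\Phi(t)$. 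Gr\"onwall then yields $\Phi(t)\le\Phi(0)e^{(1+\|V\|_{L^{\infty}})|t|}$. Since $V$ has no time regularity, I would justify the differentiation by first mollifying $V$ and the data, running the identity on smooth solutions, and then passing to the limit using the Duhamel representation and dominated convergence. Since the local existence time depends only on $\|V\|_{L^{\infty}}$ and the Gr\"onwall bound controls the $X_T$-norm on every finite interval, the local solution extends to all of $\mathbb{R}$ by iteration.

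Uniqueness is immediate from the same energy identity applied to the difference $v=u_{1}-u_{2}$ of two solutions with identical data: $\Phi_{v}(0)=0$ and $|\Phi_{v}'|\le(1+\|V\|_{L^{\infty}})\Phi_{v}$ force $v\equiv0$. There is no serious obstacle here, because the factor $(\sqrt{-\Delta})^{-1}$ in Duhamel absorbs one derivative and matches the $L^{\infty}$ assumption on $V$ perfectly; the only mildly delicate point is the regularization argument needed to justify the energy identity when $V$ is merely $L^{\infty}$ in time.
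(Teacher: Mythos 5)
Your proof is correct and follows essentially the same route as the paper: recast the Cauchy problem in Duhamel form, run a Banach fixed-point argument on a time interval whose length depends only on $\|V\|_{L^\infty_{t,x}}$ (not on the size of the data), and iterate that uniform local step to cover all of $\mathbb{R}$. The additional Gr\"onwall-type \emph{a priori} energy bound you interpose is valid but redundant---once the contraction time is known to be independent of the data, the iteration closes by itself, which is exactly the observation the paper relies on.
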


\begin{proof}
By Duhamel's formula, we might write the solution as
\begin{equation}
u=\frac{\sin\left(t\sqrt{-\Delta}\right)}{\sqrt{-\Delta}}f+\cos\left(t\sqrt{-\Delta}\right)g+\int_{0}^{t}\frac{\sin\left(\left(t-s\right)\sqrt{-\Delta}\right)}{\sqrt{-\Delta}}V(\cdot,s)u(s)\,ds.
\end{equation}
Starting from the local existence, we try to construct the solution
in
\begin{equation}
X=C\left([0,\,T],\,H^{1}\left(\mathbb{R}^{3}\right)\right)\times C\left([0,\,T),\,L^{2}\left(\mathbb{R}^{3}\right)\right)
\end{equation}
with $T\leq1$. One can view $u$ as the fixed-point of the map
\begin{equation}
S(h)(t)=\frac{\sin\left(t\sqrt{-\Delta}\right)}{\sqrt{-\Delta}}f+\cos\left(t\sqrt{-\Delta}\right)g+\int_{0}^{t}\frac{\sin\left(\left(t-s\right)\sqrt{-\Delta}\right)}{\sqrt{-\Delta}}V(\cdot,s)h(s)\,ds.
\end{equation}
Let
\begin{equation}
R=2\left\Vert \frac{\sin\left(t\sqrt{-\Delta}\right)}{\sqrt{-\Delta}}f+\cos\left(t\sqrt{-\Delta}\right)g\right\Vert _{X}.
\end{equation}
We will show when $T$ is small enough, $S$ will be a contraction
map in $B_{X}(0,R)$. 

Clearly,

\begin{equation}
\left\Vert S(h)(t)\right\Vert _{X}\leq\left\Vert \frac{\sin\left(t\sqrt{-\Delta}\right)}{\sqrt{-\Delta}}f+\cos\left(t\sqrt{-\Delta}\right)g\right\Vert _{X}+\left\Vert \int_{0}^{t}\frac{\sin\left(\left(t-s\right)\sqrt{-\Delta}\right)}{\sqrt{-\Delta}}V(\cdot,s)h(s)\,ds\right\Vert _{X}.
\end{equation}
By direct calculations,
\begin{equation}
\left\Vert \int_{0}^{t}\frac{\sin\left(\left(t-s\right)\sqrt{-\Delta}\right)}{\sqrt{-\Delta}}V(\cdot,s)h(s)\,ds\right\Vert _{L_{x}^{2}}\leq T^{2}\left\Vert V\left(\cdot,t\right)h(t)\right\Vert _{L^{2}},
\end{equation}
\begin{equation}
\left\Vert \int_{0}^{t}\frac{\sin\left(\left(t-s\right)\sqrt{-\Delta}\right)}{\sqrt{-\Delta}}V(\cdot,s)h(s)\,ds\right\Vert _{\dot{H}_{x}^{1}}\leq T\left\Vert V\left(\cdot,t\right)h(t)\right\Vert _{L^{2}},
\end{equation}
and
\begin{equation}
\left\Vert \partial_{t}\left(\int_{0}^{t}\frac{\sin\left(\left(t-s\right)\sqrt{-\Delta}\right)}{\sqrt{-\Delta}}V(\cdot,s)h(s)\,ds\right)\right\Vert _{L_{x}^{2}}\leq T\left\Vert V\left(\cdot,t\right)h(t)\right\Vert _{L^{2}}.
\end{equation}
Therefore, we can pick $T\left\Vert V\right\Vert _{L_{t,x}^{\infty}}<\frac{1}{10}$,
we have
\begin{equation}
\left\Vert S(h)(t)\right\Vert _{X}\leq\left\Vert \frac{\sin\left(t\sqrt{-\Delta}\right)}{\sqrt{-\Delta}}f+\cos\left(t\sqrt{-\Delta}\right)g\right\Vert _{X}+\frac{1}{2}\left\Vert h\right\Vert _{X}.
\end{equation}
Hence, $S$ maps $B_{X}\left(0,R\right)$ into itself. 

Next we show $S$ is a contraction. The calculations are straightforward.
\begin{equation}
\left\Vert S(h_{1}-h_{2})(t)\right\Vert _{X}\leq\left\Vert \int_{0}^{t}\frac{\sin\left(\left(t-s\right)\sqrt{-\Delta}\right)}{\sqrt{-\Delta}}V(\cdot,s)\left(h_{1}(s)-h_{2}(s)\right)\,ds\right\Vert _{X}.
\end{equation}
The the same arguments as above give
\begin{equation}
\left\Vert S(h_{1}-h_{2})(t)\right\Vert _{X}\leq\frac{1}{2}\left\Vert (h_{1}-h_{2})(t)\right\Vert _{X}.
\end{equation}
Therefore, by fixed point theorem, there is $u\in X$ such that
\begin{equation}
u=S(u),
\end{equation}
in other words, there exist $u\in C\left([0,\,T],\,H^{1}\left(\mathbb{R}^{3}\right)\right)\times C\left([0,\,T),\,L^{2}\left(\mathbb{R}^{3}\right)\right)$
such that
\begin{equation}
u=\frac{\sin\left(t\sqrt{-\Delta}\right)}{\sqrt{-\Delta}}f+\cos\left(t\sqrt{-\Delta}\right)g+\int_{0}^{t}\frac{\sin\left(\left(t-s\right)\sqrt{-\Delta}\right)}{\sqrt{-\Delta}}V(\cdot,s)u(s)\,ds.
\end{equation}

We notice that the choice of $T$ is independent of the size of the
initial data. Then we can repeat the argument above with $\left(u(T),\partial_{t}u(T)\right)$
as initial condition to construct the solution from $T$ to $2T$.
Iterating this process, one can easily construct the solution $\left(u,\,u_{t}\right)\in C\left(\mathbb{R},\,H^{1}\left(\mathbb{R}^{3}\right)\right)\times C\left(\mathbb{R},\,L^{2}\left(\mathbb{R}^{3}\right)\right)$.

Finally, we notice the uniqueness of the solution follows from Gr\"onwall's
inequality. Suppose one has two solutions $u_{1}$ and $u_{2}$ to
our equation with the same data, then
\begin{equation}
\left\Vert u_{1}-u_{2}\right\Vert _{H^{1}\times L^{2}}(t)\leq\int_{0}^{t}\left(t-s\right)\left\Vert u_{1}-u_{2}\right\Vert (s)\,ds.
\end{equation}
Applying Gr\"onwall's inequality over $[0,T]$, we obtain
\begin{equation}
\left\Vert u_{1}-u_{2}\right\Vert _{X}=0,
\end{equation}
which means $u_{1}\equiv u_{2}$ on $[0,T]$. Then by the same iteration
argument as above, we can conclude that in $C\left(\mathbb{R},\,H^{1}\left(\mathbb{R}^{3}\right)\right)\times C\left(\mathbb{R},\,L^{2}\left(\mathbb{R}^{3}\right)\right)$
\begin{equation}
u_{1}\equiv u_{2}.
\end{equation}
 Therefore, one obtains the uniqueness.

The theorem is proved. 
\end{proof}
In our setting, $V(x,t)=V\left(x-\vec{v}(t)\right)$ satisfies the
assumption of Theorem \ref{thm:globalexistence}, therefore we have
the global existence and uniqueness.
\begin{cor}
\label{cor:GlobalCharge}For each $\left(g,\,f\right)\in H^{1}\left(\mathbb{R}^{3}\right)\times L^{2}\left(\mathbb{R}^{3}\right),$
there is a unique global solution $\left(u,\,u_{t}\right)\in C\left(\mathbb{R},\,H^{1}\left(\mathbb{R}^{3}\right)\right)\times C\left(\mathbb{R},\,L^{2}\left(\mathbb{R}^{3}\right)\right)$
to the wave equation
\begin{equation}
\partial_{tt}u-\Delta u+V\left(x-\vec{v}(t)\right)u=0\label{eq:chargeexist}
\end{equation}
with initial data
\begin{equation}
u(x,0)=g,\,\partial_{t}u(x,0)=f.
\end{equation}
\end{cor}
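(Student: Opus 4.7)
The plan is to reduce the corollary directly to Theorem \ref{thm:globalexistence} by verifying that the moving potential $V(x-\vec{v}(t))$ satisfies the hypothesis $V(x,t)\in L^{\infty}_{t,x}$ required there. Setting $\widetilde{V}(x,t):=V(x-\vec{v}(t))$, we have pointwise $|\widetilde{V}(x,t)|=|V(x-\vec{v}(t))|$, so by translation invariance of the sup norm in the spatial variable,
\begin{equation}
\|\widetilde{V}\|_{L^{\infty}_{t,x}}=\sup_{t\in\mathbb{R}}\|V(\cdot-\vec{v}(t))\|_{L^{\infty}_{x}}=\|V\|_{L^{\infty}_{x}}.
\end{equation}
Under the standing decay assumption on $V$ (namely $|V(x)|\lesssim\langle x\rangle^{-\alpha}$ with $\alpha>3$, as used throughout the paper), we have $V\in L^{\infty}$, hence $\widetilde{V}\in L^{\infty}_{t,x}$.

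Given this, I would simply invoke Theorem \ref{thm:globalexistence} with the potential $\widetilde{V}(x,t)=V(x-\vec{v}(t))$ and the prescribed initial data $(g,f)\in H^{1}\times L^{2}$ to obtain a unique solution
\begin{equation}
(u,u_t)\in C(\mathbb{R},H^{1}(\mathbb{R}^{3}))\times C(\mathbb{R},L^{2}(\mathbb{R}^{3}))
\end{equation}
to equation \eqref{eq:chargeexist}. The admissibility of $\vec{v}(t)$ (continuity, hence measurability of $\widetilde{V}$ in $t$) is what ensures that the Duhamel iteration in the proof of Theorem \ref{thm:globalexistence} makes sense; no additional regularity of $\vec{v}$ is needed because the contraction argument there only uses the uniform bound $\|V(\cdot,s)h(s)\|_{L^{2}}\leq\|V\|_{L^{\infty}_{t,x}}\|h(s)\|_{L^{2}}$.

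There is no real obstacle here: the entire content of the corollary is already contained in Theorem \ref{thm:globalexistence}, and the only substantive step is the trivial observation that translating a bounded function keeps it bounded with the same norm. The proof is therefore essentially one line long and I would present it as such, noting that the time-dependence enters only through the translation $\vec{v}(t)$, which does not affect the $L^{\infty}_{t,x}$ bound on the potential.
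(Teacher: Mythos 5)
Your proposal is correct and is exactly how the paper treats the corollary: the text preceding it simply observes that $V(x-\vec{v}(t))$ satisfies the $L^{\infty}_{t,x}$ hypothesis of Theorem \ref{thm:globalexistence}, since translating $V$ in space preserves its sup norm, and then invokes that theorem. Your one-line reduction matches the paper's argument.
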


\begin{rem*}
The theorem above also applies to the charge transfer model in \cite{GC2}:
\[
\partial_{tt}u-\Delta u+\sum_{i=1}^{m}\sum_{j=1}^{m}V_{v_{j}}\left(x-\vec{v}_{j}t\right)u=0.
\]
\end{rem*}

\section{Revered Strichartz estimates}

In this appendix, we present an alternative approach to the homogeneous
endpoint reversed Strichartz estimates based on the Fourier transformation. 

We only consider $\frac{\sin\left(t\sqrt{-\Delta}\right)}{\sqrt{-\Delta}}f=\frac{1}{2}\frac{e^{it\sqrt{-\Delta}}}{\sqrt{-\Delta}}f-\frac{1}{2}\frac{e^{-it\sqrt{-\Delta}}}{\sqrt{-\Delta}}f$.
We can further reduce to consider
\begin{equation}
\frac{e^{it\sqrt{-\Delta}}}{\sqrt{-\Delta}}f
\end{equation}
 With Fourier transform and polar coordinates $\xi=\lambda\omega$,
we have
\begin{eqnarray}
\frac{e^{it\sqrt{-\Delta}}}{\sqrt{-\Delta}}f & = & \int_{0}^{\infty}\int_{\mathbb{S}^{2}}\frac{e^{2\pi it\lambda}}{\lambda}e^{2\pi i\lambda\left(\omega\cdot x\right)}\lambda^{2}\hat{f}\left(\lambda\omega\right)d\omega d\lambda\nonumber \\
 & = & \int_{\mathbb{R}}e^{2\pi it\lambda}\left(\chi_{[0,\infty)}(\lambda)\int_{\mathbb{S}^{2}}e^{2\pi i\lambda\left(\omega\cdot x\right)}\lambda\hat{f}\left(\lambda\omega\right)d\omega\right)d\lambda\nonumber \\
 & = & \int_{\mathbb{R}}e^{2\pi it\lambda}G(x,\lambda)d\lambda
\end{eqnarray}
where
\begin{equation}
G(x,\lambda)=\chi_{[0,\infty)}(\lambda)\int_{\mathbb{S}^{2}}e^{2\pi i\lambda\left(\omega\cdot x\right)}\lambda\hat{f}\left(\lambda\omega\right)d\omega.
\end{equation}
By Plancherel's Theorem, we know for fixed $x$,
\begin{equation}
\left\Vert \frac{e^{it\sqrt{-\Delta}}}{\sqrt{-\Delta}}f\right\Vert _{L_{t}^{2}}=\left\Vert G(x,\lambda)\right\Vert _{L_{\lambda}^{2}}.
\end{equation}
\begin{eqnarray}
G^{2}(x,\lambda) & = & \left(\chi_{[0,\infty)}(\lambda)\int_{\mathbb{S}^{2}}e^{2\pi i\lambda\left(\omega\cdot x\right)}\lambda\hat{f}\left(\lambda\omega\right)d\omega\right)^{2}\nonumber \\
 & \lesssim & \chi_{[0,\infty)}(\lambda)\int_{\mathbb{S}^{2}}\lambda^{2}\left|\hat{f}\left(\lambda\omega\right)\right|^{2}d\omega
\end{eqnarray}
\begin{eqnarray}
\left\Vert \frac{e^{it\sqrt{-\Delta}}}{\sqrt{-\Delta}}f\right\Vert _{L_{t}^{2}}^{2} & \lesssim & \int_{0}^{\infty}\int_{\mathbb{S}^{2}}\lambda^{2}\left|\hat{f}\left(\lambda\omega\right)\right|^{2}d\omega d\lambda\nonumber \\
 & \lesssim & \int\left|\hat{f}\left(\xi\right)\right|^{2}d\xi\\
 & = & \int\left|f\left(x\right)\right|^{2}dx.\nonumber 
\end{eqnarray}
Therefore,
\begin{equation}
\left\Vert \frac{\sin\left(t\sqrt{-\Delta}\right)}{\sqrt{-\Delta}}f\right\Vert _{L_{x}^{\infty}L_{t}^{2}}\lesssim\left\Vert f\right\Vert _{L^{2}}
\end{equation}
as desired.
\begin{rem*}
The two dimension version was obtained in \cite{Oh} and is mentioned
in \cite{B}:
\begin{equation}
\left\Vert e^{it\sqrt{-\Delta}}f\right\Vert _{L_{x}^{\infty}L_{t}^{2}}\lesssim\left\Vert f\right\Vert _{\dot{B}_{2,1}^{1/2}}.
\end{equation}
\end{rem*}

\end{document}